\title{Decompositions, approximate structure, transference, and the 
Hahn-Banach theorem}
\author{W.T. Gowers}
\address{Department of Pure Mathematics and Mathematical Statistics, 
Wilberforce Road, Cambridge CB3 0WB, UK.}
\email{W.T.Gowers@dpmms.cam.ac.uk}
\renewcommand\subsection{\@startsection
   {subsection}{2}{0mm}
   {-\baselineskip}
   {0.5\baselineskip}
   {\normalfont\normalsize\bf}}
\newtheorem{theorem}{Theorem}[section]
\newtheorem{proposition}[theorem]{Proposition}
\newtheorem{lemma}[theorem]{Lemma}
\newtheorem{corollary}[theorem]{Corollary}
\newtheorem*{definition}{Definition}
\newtheorem*{decomposition*}{Hoped-for decomposition}
\newtheorem*{theorem*}{Putative Inverse Theorem}
\newtheorem*{assumptions*}{Consequences of failure of decomposition}
\def\E{\mathbb{E}}
\def\Z{\mathbb{Z}}
\def\R{\mathbb{R}}
\def\C{\mathbb{C}}
\def\N{\mathbb{N}}
\def\a{\alpha}
\def\b{\beta}
\def\g{\gamma}
\def\d{\delta}
\def\e{\epsilon}
\def\cD{\mathcal{D}}
\def\ra{\rightarrow}
\def\seq#1#2{#1_1,\dots,#1_#2}
\def\sp#1{\langle #1\rangle}
\def\ol{\overline}
\def\hf{\hat{f}}
\def\cD{\mathcal{D}}
\def\hG{\hat{G}}
\def\hg{\hat{g}}
\def\hh{\hat{h}}
\begin{document}

\maketitle

\begin{abstract} We discuss three major classes of theorems in
additive and extremal combinatorics: decomposition theorems,
approximate structure theorems, and transference principles.
We also show how the finite-dimensional Hahn-Banach theorem
can be used to give short and transparent proofs of many results
of these kinds. Amongst the applications of this method is a 
much shorter proof of one of the major steps in the proof of
Green and Tao that the primes contain arbitrarily long arithmetic 
progressions. In order to explain the role of this step, we include 
a brief description of the rest of their argument. A similar proof has
been discovered independently by Reingold, Trevisan, Tulsiani
and Vadhan \cite{rttv}.
\end{abstract}

\tableofcontents

\section{Introduction}

This paper has several purposes. One is to provide a survey of some of
the major recent developments in the rapidly growing field that has
come to be known as additive combinatorics, focusing on three
classes of theorems: decomposition theorems, approximate structural
theorems and transference principles. (An explanation of these phrases
will be given in just a moment.) A second is to show how the
Hahn-Banach theorem leads to a simple and flexible method for proving
results of these three kinds. A third is to demonstrate this by actually
giving simpler proofs of several important results, or parts of results.
One of the proofs we shall simplify is the proof of Green and Tao that
the primes contain arbitrarily long arithmetic progressions \cite{greentao}, 
which leads to the fourth purpose of this paper: to provide a partial guide to their 
paper. We shall give a simple proof of a result that is implicit in their
paper, and made explicit in a later paper of Tao and Ziegler, and then
we shall explain informally how they use this result to prove their
famous theorem. A proof along similar lines has been discovered independently
by Reingold, Trevisan, Tulsiani and Vadhan \cite{rttv}. We have tried to 
design this paper so that the reader
who is just interested in the Green-Tao theorem can get away with 
reading only a small part of it. However, the earlier sections of 
the paper provide considerable motivation for the later arguments,
so such a reader would be well-advised at least to skim the sections
that are not strictly speaking necessary.

Now let us describe the classes of theorems that will principally concern 
us. By a \textit{decomposition theorem} we mean a statement that tells
us that a function $f$ with certain properties can be decomposed as a 
sum $\sum_{i=1}^kg_i$, where the functions $g_i$ have certain other
properties. There are two kinds of decomposition theorem that have
been particularly useful. One kind says that $f$ can be written as
$\sum_{i=1}^kg_i+h$, where the functions $g_i$ have some explicit
description and $h$, the ``error term'' is in a useful sense small.

Another kind, which is closely related, brings us to our second class
of results. An \textit{approximate structure theorem} is a result that
says that, under appropriate conditions, we can write a function $f$
as $f_1+f_2$, where $f_1$ is ``structured'' in some sense, and $f_2$
is ``quasirandom''. The rough idea is that the structure of $f_1$ is
strong enough for us to be able to analyse it reasonably explicitly,
and the quasirandomness of $f_2$ is strong enough for many properties
of $f_1$ to be unaffected if we ``perturb'' it to $f_1+f_2$. Often,
in order to obtain stronger statements about the structure and the
quasirandomness, one allows also a small $L_2$-error: that is, one
writes $f$ as $f_1+f_2+f_3$ with $f_1$ structured, $f_2$ quasirandom,
and $f_3$ small in $L_2$.

A \textit{transference principle} is a statement to the effect that
a function $f$ that belongs to some space $X$ of functions can be
approximated by a function $g$ that belongs to another space $Y$.
Such a statement is useful if the functions in $Y$ are easier to
handle than the functions in $X$ and the approximation is of a 
kind that preserves the properties that one is interested in. As
we shall see later, a transference principle is the fundamental
step in the proof of Green and Tao. For now, let us merely note
that a transference principle is a particular kind of decomposition
theorem: it tells us that $f$ can be written as $g+h$, where $g\in Y$
and $h$ is small in an appropriate sense.

As well as the Green-Tao theorem, we shall discuss several other
results in additive combinatorics. One is a structure theorem proved
by Tao in an important paper \cite{tao} that gives a discretization of
Furstenberg's ergodic-theory proof \cite{furstenberg} of Szemer\'edi's
theorem \cite{szemeredi}, or more precisely a somewhat different
ergodic-theory argument due to Host and Kra \cite{hostkra}. We shall
give an alternative proof of (a slight generalization of) this theorem,
and give some idea of how it can be used to prove other results.
Amongst these other results are Roth's theorem \cite{roth}, which states 
that every set of integers of positive upper density contains an arithmetic
progression of length 3, and Szemer\'edi's regularity lemma
\cite{szreg}, a cornerstone of extremal graph theory, which shows that
every graph can be approximated by a disjoint union of boundedly many
quasirandom graphs (and which is a very good example of an approximate
structure theorem).

The remaining sections of this paper are organized as follows.
The next section introduces several norms that are used to 
define quasirandomness. Strictly speaking, it is independent
of much of the rest of the paper, since many of our results
will be rather general ones about norms that satisfy various
hypotheses. However, for the reader who is unfamiliar with the
basic concepts of additive combinatorics it may not be obvious
that these hypotheses are satisfied except in one or two very
special cases: section 2 should convince such a reader that
the general results can be applied in many interesting contexts.

In section 3, we introduce our main tool, the finite-dimensional
Hahn-Banach theorem, and we give one or two very easy consequences of
it. Even these consequences are of interest, as we shall explain---one
of them is a non-trivial decomposition theorem of the first kind
discussed above---but the method comes into its own when we introduce
one or two further ideas in order to obtain conclusions that can be
applied much more widely.

One of these ideas is the relatively standard one of
\textit{polynomial approximations}. Often we start with a function $f$
that takes values in an interval $[a,b]$, and we want its structured
part $f_1$ to take values in $[a,b]$ as well. If $f_1$ is bounded, and
if the class of structured functions is closed under composition with
polynomials, then we can sometimes achieve this by choosing a
polynomial $P$ such that $P(x)$ approximates $a$ when $x<a$, $x$ when
$a\leq x\leq b$, and $b$ when $x>b$. Then the function $Pf_1$ takes
values in $[a,b]$ (approximately), and under appropriate circumstances
it is possible to argue that it approximates $f_1$.  In section 4, we
shall illustrate this technique by proving two results. The first is a fairly
simple transference principle that we shall need later, and the second is a
slightly more complicated version of it that is needed for proving the
Green-Tao theorem. The latter is essentially the same as the
``abstract structure theorem'' of Tao and Ziegler \cite{taoziegler}, so called because
it is an abstraction of arguments from the paper of Green and Tao. It
is this second result that can be regarded as a major step in the
proof of the Green-Tao theorem, and which is used to prove their
transference principle. We shall end Section 4 with a brief description of 
the rest of the proof of Green and Tao.

In section 5, we shall prove the structure theorem of Tao mentioned 
earlier, and show how it leads to a strengthened decomposition 
theorem. We end the section, and the paper, with an indication 
of how to use the structure theorem.

\section{Some basic concepts of additive combinatorics.}

\subsection{Preliminaries: Fourier transforms and $L_p$-norms.}
 
Let $G$ be a finite Abelian group. A \textit{character} on $G$ is a
non-zero function $\psi:G\ra\C$ with the property that
$\psi(xy)=\psi(x)\psi(y)$ for every $x$ and $y$. It is easy to show
that $\psi$ must take values in the unit circle. It is also easy to
show that two distinct characters are orthogonal. To see this, note
first that if $\psi_1$ and $\psi_2$ are distinct, then
$\psi_1(\psi_2)^{-1}$ is a non-trivial character (that is, a character
that is not identically 1). Next, note that if $\psi$ is a non-trivial
character and $\psi(y)\ne 1$, then
$\E_x\psi(x)=\E_x\psi(xy)=\psi(y)\E_x\psi(x)$, so $\E_x\psi(x)=0$.
(The notation ``$\E_x$'' is shorthand for ``$|G|^{-1}\sum_{x\in G}$''.)
This implies the orthogonality. Less obvious, but a straightforward
consequence of the classification of finite Abelian groups, is the
fact that the characters span all functions from $G$ to $\C$: that is,
they form an orthonormal basis of $L_2(G)$. (We shall discuss this
space more in a moment.)

If $f:G\ra\C$, then the \textit{Fourier transform} $\hf$ of $f$ tells
us how to expand $f$ in terms of the basis of characters. More
precisely, one first defines the \textit{dual group} $\hG$ to
be the group of all characters on $G$ under pointwise multiplication.
Then $\hf$ is a function from $\hG$ to $\C$, defined by the formula
\begin{equation*}
\hf(\psi)=\E_xf(x)\ol{\psi(x)}=\E_xf(x)\psi(-x).
\end{equation*}
The \textit{Fourier inversion formula} (which it is an easy
exercise to verify) then tells us that
\begin{equation*}
f(x)=\sum_\psi\hf(\psi)\psi(x),
\end{equation*}
which gives the expansion of $f$ as a linear combination of characters.

There are two natural measures that
one can put on $G$: the uniform probability measure, and the counting
measure (which assigns measure 1 to each singleton). Both of these are
useful. The former is useful when one is looking at functions that are
``flat'': an example would be the characteristic function of a dense
subset $A\subset G$. If we write $A(x)$ for $\chi_A(x)$, then
$\E_xA(x)=|G|^{-1}\sum_xA(x)=|A|/|G|$ is the density of $A$. The
counting measure is more useful for functions $F$ that are of
``essentially bounded support'', in the sense that there is a set $K$
of bounded size such that $F$ is approximately equal (in some
appropriate sense) to its restriction to $K$. 

If $f$ is a flat function, then there is a useful sense in which its
Fourier transform is of essentially bounded support in the dual
group. Therefore, if we are interested in flat functions defined on
$G$, then we look at the uniform probability measure on $G$ and the
counting measure on the dual group $\hG$. We then define inner
products, $L_p$-norms, and $\ell_p$-norms as follows.

The inner product of two functions $f$ and $g$ from $G$ to $\C$ is
the quantity $\sp{f,g}=\E_xf(x)\ol{g(x)}$. The resulting Euclidean
norm is $\|f\|_2=\Bigl(\E_x|f(x)|^2\Bigr)^{1/2}$, and the Euclidean
space is $L_2$. More generally, $L_p$ is the space of all functions
from $G$ to $\C$, with the norm $\|f\|_p=\Bigl(\E_x|f(x)|^p\Bigr)^{1/p}$,
where this is interpreted as $\max|f(x)|$ when $p=\infty$.

On the dual group $\hG$ we have the same definitions, but with
expectations replaced by sums. Thus,
$\sp{F_1,F_2}=\sum_xF_1(x)\ol{F_2(x)}$ and
$\|F\|_p=\Bigl(\sum_x|F(x)|^p\Bigr)^{1/p}$. The resulting space is
denoted $\ell_p$. Once again, $\|F\|_\infty$ is $\max|F(x)|$, so
$L_\infty$ and $\ell_\infty$ are in fact the same space.

Two fundamental identities that are used repeatedly in additive
combinatorics are the \textit{convolution identity} and 
\textit{Parseval's identity}. The \textit{convolution} $f*g$ of two
functions $f,g:G\ra\C$ is defined by the formula
\begin{equation*}
f*g(x)=\E_{y+z=x}f(y)g(z),
\end{equation*}
and the convolution identity states that 
$(f*g)^{\wedge}(\psi)=\hf(\psi)\hg(\psi)$ for every $\psi\in\hG$.
That is, the Fourier transform ``converts convolution into
pointwise multiplication''. It also does the reverse: the 
Fourier transform of the pointwise product $fg$ is the 
convolution $\hf*\hg$, where the latter is defined by the
formula
\begin{equation*}
\hf*\hg(\psi)=\sum_{\rho\sigma=\psi}\hf(\rho)\hg(\sigma).
\end{equation*}

Parseval's identity is the simple
statement that $\sp{f,g}=\sp{\hf,\hg}$. It is important to keep
in mind that the two inner products are defined differently, one
with expectations and the other with sums, just as the convolutions
were defined differently in $G$ and $\hG$. Setting $f=g$ in Parseval's
identity, we deduce that $\|f\|_2=\|\hf\|_2$.

The group that will interest us most is the cyclic group $\Z_N=\Z/N\Z$.
If we set $\omega=\exp(2\pi i/N)$, then any function of the form
$x\mapsto\omega^{rx}$ is a character, and the functions $\omega^{rx}$
and $\omega^{sx}$ are distinct if and only if $r$ and $s$ are not
congruent mod $N$. Therefore, one can identify $\Z_N$ with its
dual, writing
\begin{equation*}
\hf(r)=\E_xf(x)\omega^{-rx}
\end{equation*}
whenever $r$ is an element of $\Z_N$. However, the measure we use 
on $\Z_N$ is different when we are thinking of it as a dual group.

The reason that Fourier transforms are important in additive
combinatorics is that many quantities that arise naturally can be
expressed in terms of convolutions, which can then be simplified by
the Fourier transform. For instance, as we shall see in the next
subsection, the quantity $\E_{x,d}f(x)f(x+d)f(x+2d)$ arises naturally
when one looks at arithmetic progressions of length 3. This can be
rewritten as $\E_{x,z}f(x)f(z)f((x+z)/2)=\E_{x,z}f(x)f(z)\ol{g(x+z)}$,
where $g(u)=\ol{f(u/2)}$. (We need $N$ to be odd for this to make
sense.)  This is the inner product of $f*f$ with $g$, so it is equal
to $\sp{\hf^2,\hg}$.

\subsection{What is additive combinatorics about?}

The central objects of study in additive combinatorics are finite
subsets of Abelian groups. For example, one of the main results in
the area, Szemer\'edi's theorem, can be formulated as follows. 

\begin{theorem} \label{szem} For every $\d>0$ and every positive 
integer $k$ there exists $N$ such that every subset $A\subset\Z_N$ of
cardinality at least $\d N$ contains an arithmetic progression of
length $k$.
\end{theorem}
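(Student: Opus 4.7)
The plan is to prove Szemer\'edi's theorem by a density-increment strategy driven by a structure-versus-quasirandomness dichotomy, inducting on $k$. The cases $k=1,2$ are immediate. Fix $\d>0$ and suppose $N$ is large; writing $A(x)$ for the characteristic function of $A\subset\Z_N$ and setting $f=A-\d$, the number of $k$-term APs in $A$ (counted by $(x,d)\in\Z_N^2$) equals $N^2$ times
\begin{equation*}
T_k(A)=\E_{x,d}A(x)A(x+d)\cdots A(x+(k-1)d).
\end{equation*}
Expanding $A=\d+f$ gives a main term $\d^k$ plus $2^k-1$ ``error'' multilinear averages, each containing at least one factor of $f$. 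It suffices to show that these errors total at most $\d^k/2$, for then $T_k(A)\geq\d^k/2$, which for $N$ large dominates the $O(1/N)$ contribution from degenerate progressions ($d=0$).

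To bound the errors I would introduce the Gowers uniformity norm $\|\cdot\|_{U^{k-1}}$ (as in Section 2) and verify, by a standard repeated Cauchy--Schwarz / van der Corput argument, that any multilinear average of the above form with at most $1$-bounded factors and containing a single distinguished factor of $f$ is controlled in absolute value by $\|f\|_{U^{k-1}}$. Consequently there is $\eta=\eta(\d,k)>0$ such that whenever $\|f\|_{U^{k-1}}<\eta$ we get $T_k(A)\geq\d^k/2$, and we are done on this branch.

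If instead $\|f\|_{U^{k-1}}\geq\eta$, invoke an \emph{inverse theorem} for the $U^{k-1}$-norm: a bounded function with large $U^{k-1}$-norm must correlate with a ``structured'' object, informally a locally polynomial phase function of degree $k-2$. Converting this correlation into a density increment by averaging and pigeonholing on a Bohr-type set where the phase is nearly constant produces an arithmetic progression $P$ of length at least $N^{c(\d,k)}$ on which the density of $A$ exceeds $\d+\theta(\d,k)$ for some $\theta>0$. Rescaling $P$ to a copy of $\Z_{N'}$ and iterating, the density is bounded by $1$, so the process terminates after $O_\d(1)$ steps, at which point the quasirandom case must apply and we extract a $k$-AP.

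The main obstacle is the inverse theorem for $\|\cdot\|_{U^{k-1}}$: proving that large uniformity norm forces genuine polynomial structure is the deepest ingredient, and for $k\geq 4$ it is a substantial theorem (Gowers in $\Z_N$, and Green--Tao--Ziegler in general). This is precisely the kind of approximate-structure statement that the decomposition theorems highlighted in the present paper are designed to produce transparently via Hahn-Banach. A secondary but delicate issue is the quantitative bookkeeping: the sub-progression lengths $N'$ must remain large enough at every iteration to absorb the degenerate-AP contribution and to permit a further application of the inverse theorem, which controls the final bound on $N$ in the statement.
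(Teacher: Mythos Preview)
The paper does not actually prove Theorem~\ref{szem}: it is stated in Section~2 as background, with a citation to Szemer\'edi's original paper, and the surrounding discussion surveys the various known approaches (Gowers via uniformity norms, the hypergraph regularity method, Furstenberg's ergodic proof, and Tao's quantitative-ergodic version). So there is no ``paper's own proof'' to compare against.

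Your sketch follows the Gowers density-increment strategy, which is one of the routes the paper explicitly references. As a high-level outline it is sound, but one remark is in order. You write that the inverse theorem for $U^{k-1}$ is ``precisely the kind of approximate-structure statement that the decomposition theorems highlighted in the present paper are designed to produce transparently via Hahn--Banach''. That is backwards relative to what the paper does: in Section~3.2 the direction of implication is \emph{from} inverse theorems \emph{to} decomposition theorems, not the other way round. The Hahn--Banach machinery here takes an inverse theorem as a black-box hypothesis and outputs a structured-plus-quasirandom decomposition; it does not manufacture the inverse theorem itself. In your outline the inverse theorem for $U^{k-1}$ remains a deep external input (indeed, the paper only states the ``weak'' polynomial-phase version from \cite{gowers} for general $k$, and the strong $U^3$ version of Green--Tao), and the methods of this paper do not shortcut that step.
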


\noindent Here, $\Z_N$ stands for the cyclic group $\Z/N\Z$ of
integers mod $N$, and an arithmetic progression of length $k$
means a set of the form $\{x,x+d,\dots,x+(k-1)d\}$ with $d\ne 0$.

What are the \textit{maps} of interest between finite subsets of
Abelian groups? An initial guess might be that they were restrictions
of group homomorphisms, but that turns out to be far too narrow a
definition. Instead, they are functions called Freiman homomorphisms.
A \textit{Freiman homomorphism of order} $k$ between sets $A$ and $B$
is a function $\phi:A\ra B$ such that 
\begin{equation*}
\phi(a_1)+\phi(a_2)+\dots+\phi(a_k)
=\phi(a_{k+1})+\phi(a_{k+2})+\dots+\phi(a_{2k})
\end{equation*}
whenever
\begin{equation*}
a_1+a_2+\dots+a_k=a_{k+1}+a_{k+2}+\dots+a_{2k}.
\end{equation*}
In particular, a Freiman homomorphism of order 2, often just known
as a Freiman homomorphism, is a function such that 
$\phi(a_1)+\phi(a_2)=\phi(a_3)+\phi(a_4)$ whenever $a_1+a_2=a_3+a_4$.
This is equivalent to the same definition with minus instead of
plus, which is often more convenient.

A \textit{Freiman isomorphism of order} $k$ is a Freiman homomorphism
of order $k$ with an inverse that is also a Freiman homomorphism of
order $k$. The rough idea is that a Freiman homomorphism of order $k$
preserves all the linear structure of a set $A$ that can be detected
by integer combinations with coefficients adding up to 0 and with
absolute values adding up to at most $2k$. For example, it is an easy
exercise to show that if $A$ is an arithmetic progression and $B$ is
Freiman-isomorphic to $A$, then $B$ is also an arithmetic progression.
This is because a sequence $(x_1,x_2,\dots,x_m)$ is an arithmetic
progression, written out in a sensible order, if and only if
$x_{i+2}-x_{i+1}=x_{i+1}-x_i$ for every $i$. It is also easy to
show that if $A$ and $B$ are isomorphic, then their sumsets
$A+A$ and $B+B$ have the same size. 

Thus, a more precise description of the main objects studied by 
additive combinatorics would be that they are finite subsets of
Abelian groups, up to Freiman isomorphisms of various orders.

An important aspect of results such as Szemer\'edi's theorem is that
they have a certain ``robustness''. For instance, combining
Szemer\'edi's theorem with a simple averaging argument, one can deduce
the following corollary (which was first noted by 
Varnavides~\cite{varnavides}).

\begin{corollary} \label{varn} For every $\d>0$ and every positive 
integer $k$ there exists $\e>0$ such that, for every sufficiently 
large positive integer $N$, every subset $A\subset\Z_N$ of cardinality
at least $\d N$ contains at least $\e N^2$ arithmetic progressions of
length $k$.
\end{corollary}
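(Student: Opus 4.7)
The plan is a standard averaging argument due to Varnavides: embed many small arithmetic progressions of bounded length $M$ into $\Z_N$ and apply Theorem~\ref{szem} inside each one on which $A$ is still dense. First I apply Theorem~\ref{szem} with density parameter $\d/2$ and length $k$ to extract an integer $M = M(\d/2, k)$ such that every subset of $\Z_M$ of size at least $(\d/2)M$ contains an arithmetic progression of length $k$.

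Next, for each pair $(x, r) \in \Z_N \times \Z_N$, consider the tuple $P(x, r) = (x, x+r, \ldots, x+(M-1)r)$. Because for each fixed $j$ the element $x+jr$ is uniformly distributed over $\Z_N$ as $(x, r)$ varies, linearity of expectation gives $\E_{x,r}|\{j : x+jr \in A\}| \geq \d M$. Averaging against the trivial upper bound of $M$, at least a $(\d/2)$-fraction of pairs $(x, r)$ satisfy $|\{j : x+jr \in A\}| \geq (\d/2) M$. Provided $N$ is sufficiently large compared to $M$ and $k$, all but at most $O_{M,k}(N) = o(N^2)$ of these pairs additionally satisfy $mr \not\equiv 0 \pmod{N}$ for every $1 \leq m \leq k(M-1)$, which is precisely the condition for $j \mapsto x+jr$ to be a Freiman isomorphism of order $k$ from $\{0, \ldots, M-1\}$ onto its image. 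For any such good pair, the preimage set $\{j : x+jr \in A\}$ has density at least $\d/2$ inside $\{0, \ldots, M-1\}$ and therefore, by the choice of $M$, contains an arithmetic progression of length $k$; this pulls forward through the Freiman isomorphism to an arithmetic progression of length $k$ inside $A$.

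Finally I convert the resulting lower bound of $(\d/2)N^2 - o(N^2)$ on the number of good pairs into a lower bound on the number of distinct length-$k$ progressions in $A$. Any fixed progression $\{y, y+e, \ldots, y+(k-1)e\}$ with $e \neq 0$ can arise from $P(x, r)$ only if $sr \equiv e \pmod{N}$ for some nonzero integer $|s| \leq M-1$, giving $O_{k,M}(1)$ choices for $r$, and then the starting point $x$ is determined up to $O(M)$ further choices of offset. So each length-$k$ progression in $A$ is produced by at most $C(k, M)$ good pairs, and dividing yields at least $\e N^2$ distinct length-$k$ progressions in $A$ for some $\e = \e(\d, k) > 0$. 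The main obstacle is not in the underlying idea but in the bookkeeping: one has to verify that the wraparound and degeneracy cases are genuinely negligible and that the multiplicity bound on distinct progressions is uniform in $N$, since all of the combinatorial content is supplied by Theorem~\ref{szem}.
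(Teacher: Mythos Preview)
Your proof is correct and is precisely the ``simple averaging argument'' the paper alludes to but does not write out; the paper gives no detailed proof of this corollary, merely attributing the observation to Varnavides. One small technical point: you invoke Theorem~\ref{szem} for $\Z_M$ but then treat the resulting progression as living in the interval $\{0,\dots,M-1\}$, so strictly speaking you should either appeal to the (equivalent) integer-interval form of Szemer\'edi's theorem or note that a $k$-AP in $\Z_M$ can be unwrapped to one in $\{0,\dots,2M-1\}$ and adjust $M$ accordingly---but this is bookkeeping, not a gap.
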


A second important aspect is that they have ``functional versions''.
One can regard a subset of $\Z_N$ as a function that takes values
0 and 1. It turns out that many of the arguments used to prove
Szemer\'edi's theorem apply to a much wider class of functions.
In particular, they apply to functions that take values in the
interval $[0,1]$. The following generalization of Szemer\'edi's
theorem is easily seen to follow from Corollary \ref{varn}.

\begin{corollary} \label{functszem} For every $\d>0$ and every positive 
integer $k$ there exists $\e>0$ such that, for every positive integer 
$N$ and every function $f:\Z_N\rightarrow[0,1]$
for which $\E_xf(x)\geq\d$, we have the inequality
$$\E_{x,d}f(x)f(x+d)\dots f(x+(k-1)d)\geq\e.$$
\end{corollary}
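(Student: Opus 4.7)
The plan is to reduce Corollary~\ref{functszem} to Corollary~\ref{varn} by passing to an appropriate level set of $f$. The hypothesis $f \leq 1$ is crucial, since without it there is no way to convert a density lower bound on $\E_x f(x)$ into a lower bound on the size of a level set.

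First I would fix a threshold $\eta = \delta/2$ and let $A = \{x \in \Z_N : f(x) \geq \eta\}$. Splitting the sum $\sum_x f(x)$ according to whether $x \in A$ and using $f \leq 1$ gives $\delta N \leq |A| + \eta(N - |A|)$, hence $|A| \geq \eta N$.

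Next, for $N$ at least the threshold $N_0 = N_0(\eta, k)$ required by Corollary~\ref{varn}, that corollary supplies some $\epsilon_0 = \epsilon_0(\eta, k) > 0$ such that $A$ contains at least $\epsilon_0 N^2$ arithmetic progressions of length $k$, i.e.\ pairs $(x, d)$ with $d \neq 0$ and $\{x, x+d, \dots, x+(k-1)d\} \subseteq A$. For each such pair, every factor satisfies $f(x+id) \geq \eta$, so the product is at least $\eta^k$. Summing over these pairs and dividing by $N^2$ gives
\begin{equation*}
\E_{x,d} f(x) f(x+d) \cdots f(x+(k-1)d) \geq \eta^k \epsilon_0.
\end{equation*}

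Finally, the statement requires $\epsilon$ to work for \emph{every} $N$, including those below the Varnavides threshold. For $N < N_0$ one handles the finitely many remaining cases by isolating the diagonal term $d = 0$: by Jensen's inequality applied to the convex function $t \mapsto t^k$ on $[0,1]$, the contribution from $d = 0$ to the average is $N^{-1} \E_x f(x)^k \geq \delta^k / N_0$. Taking $\epsilon = \min(\eta^k \epsilon_0,\, \delta^k / N_0)$ yields a constant that works uniformly in $N$. There is no genuine obstacle here; the only point requiring any care is the choice of level, which must be high enough that $\eta^k$ is a useful lower bound on the product, yet low enough to ensure that $A$ has density at least $\eta$, and $\eta = \delta/2$ is the standard compromise.
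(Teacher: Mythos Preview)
Your argument is correct. The paper itself does not give a proof of this corollary; it simply asserts that it ``is easily seen to follow from Corollary~\ref{varn}'', and your level-set argument (pass to $A=\{x:f(x)\geq\delta/2\}$, apply Varnavides, and handle small $N$ via the diagonal $d=0$) is exactly the standard way to supply the missing details.
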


\noindent Here, $\E_{x,d}$ denotes the expectation over all pairs
$(x,d)\in\Z_N^2$. It can be regarded as shorthand for $N^{-2}\sum_{x,d}$,
but it is better to think in probabilistic terms: the left-hand side
of the above inequality is then an expectation over all arithmetic
progressions of length $k$ (including degenerate ones with $d=0$,
but for large $N$ these make a tiny contribution to the total).

A third important aspect is a deeper form of robustness. It turns 
out that quantities such as $\E_{x,d}f(x)f(x+d)\dots f(x+(k-1)d)$
are left almost unchanged if you perturb $f$ by adding a function
$g$ that is small in an appropriate norm. Furthermore, it is possible
for $g$ to be small in this norm even when the average size $\E_x|g(x)|$
of $g(x)$ is large: a typical example of such a function is one that
takes the values $\pm 1$ independently at random. The changes to
the values of $f$ are then quite large, but the randomness of $g$
forces their contribution to expressions such as 
$\E_{x,d}f(x)f(x+d)\dots f(x+(k-1)d)$ to cancel out almost 
completely. This cancellation, rather than smallness of a more 
obvious kind, is what justifies our thinking of $f+g$ as a
``perturbation'' of $f$.

Thus, it is tempting to revise further our rough definition of
additive combinatorics and say that the central objects of study
are subsets of Abelian groups, up to Freiman isomorphism and
``quasirandom perturbation''. However, it takes some effort to make
this idea precise, since the notion of a Freiman homomorphism does not
apply as well to functions as it does to sets (because it is
insufficiently robust). Also, not every quantity of importance in the
area is approximately invariant up to quasirandom perturbations: an
example of one that isn't is the size of the sumset $A+A$ of a set $A$
of size $n$. So we shall content ourselves with the observation that
all the results of this paper \textit{are} approximately invariant.

So that we can say what this means, let us give some examples of
norms that measure quasirandomness.

\subsection{Uniformity norms for subsets of finite Abelian groups.}
\label{uniformitynorms}

Let $G$ be a finite Abelian group, and let $g:G\ra\C$. The 
$U^2$-\textit{norm} of $g$ is defined by the formula 
\begin{equation*}
\|g\|_{U^2}^4=\E_{x,a,b}g(x)\ol{g(x+a)}\ol{g(x+b)}g(x+a+b).
\end{equation*}
We shall not give here the verification that this is a norm
(though it will follow from a remark we make in \ref{u2struct}),
since our main concern is the sense in which it measures
quasirandomness. It can be shown that if $f:G\ra\C$ is a 
function with $\|f\|_\infty\leq 1$ and $g$ is another such
function with the additional property that $\|g\|_{U^2}$ is
small, then
\begin{equation*}
\E_{x,d}f(x)f(x+d)f(x+2d)\approx\E_{x,d}(f+g)(x)(f+g)(x+d)(f+g)(x+2d).
\end{equation*}
A case of particular interest is when $f$ is the characteristic
function of a subset $A\subset G$ of density $\d$, which again we
shall denote by $A$, and $g(x)=A(x)-\d$ for every $x$. If
$\|g\|_{U^2}$ is small, then we can think of $f$ as a quasirandom
perturbation of the constant function $\d$. Then
$\E_{x,d}A(x)A(x+d)A(x+2d)$ will be around $\d^3$, the approximate
value it would take (with high probability) if the elements of $A$
were chosen independently at random with probability $\d$. When
$\|g\|_{U^2}$ is small, we say that $A$ is a \textit{quasirandom}
subset of $G$. (This definition is essentially due to Chung and Graham
\cite{chunggraham}.)

In many respects, a quasirandom set behaves as one would expect
a random set to behave, but in by no means all. For example, even
if $A$ is as quasirandom as it is possible for a set to be, it
does not follow that 
\begin{equation*}
\E_{x,d}A(x)A(x+d)A(x+2d)A(x+3d)\approx\d^4.
\end{equation*}
An example that shows this is the subset $A\subset\Z_N$ that
consists of all $x$ such that $x^2\in[-\d N/2,\d N/2]$. It can
be shown that the density of $A$ is very close to $\d$ when
$N$ is large, and that $\|g\|_{U^2}=\|A-\d\|_{U^2}$ is extremely
small. However, for this set $A$, 
\begin{equation*}
\E_{x,d}A(x)A(x+d)A(x+2d)A(x+3d)
\end{equation*}
turns out to be at least $c\d^3$ for some absolute constant $c>0$. 
We will not prove this here (a proof can be found in \cite{quadex}), but 
we give the example in order to draw attention to its quadratic
nature. It turns out that this feature of the example is necessary,
though quite what that means is not obvious, and the proof is
even less so. See subsection \ref{inverse} for further discussion
of this.

This example shows that the smallness of the $U^2$-norm is not
sufficient to explain all the typical behaviour of a random function.
For this one needs to introduce ``higher'' uniformity norms, of 
which the next one is (unsurprisingly) the $U^3$-norm. If $g$ is
a function, then $\|g\|_{U^3}^8$ is given by the expression
\begin{equation*}
\E_{x,a,b,c}g(x)\ol{g(x+a)g(x+b)g(x+c)}g(x+a+b)
g(x+a+c)g(x+b+c)\ol{g(x+a+b+c)}.
\end{equation*}
From this it is easy to guess the definition of the $U^k$ norm,
but for completeness here is a formula for it:
\begin{equation*}
\|g\|_{U^k}^{2^k}=\E_{x,a_1,\dots,a_k}\prod_{\e\in\{0,1\}^k}
C^{|\e|}g\bigl(x+\sum\e_ia_i\bigr),
\end{equation*}
where $C$ denotes the operation of complex conjugation and
$|\e|$ denotes the number of non-zero coordinates of $\e$.

These norms were introduced in \cite{gowers}, where it was shown, as 
part of a proof of Szemer\'edi's theorem, that if $A$ is a 
subset of $\Z_N$ of density $\d$, $g(x)=A(x)-\d$ for every
$x$, and $\|g\|_{U^k}$ is sufficiently small (meaning smaller
than a positive constant that depends on $\d$ but not on $N$), 
then
\begin{equation*}
\E_{x,d}A(x)A(x+d)\dots A(x+kd)\approx\d^{k+1}.
\end{equation*}
Let us call a set \textit{uniform of degree} $k-1$ if its $U^k$-norm
is small. Then the above assertion is that a set of density $\d$ that
is sufficiently uniform of degree $k-1$ contains roughly as many
arithmetic progressions (mod $N$) of length $k+1$ as a random set 
of density $\d$ will (with high probability) contain. In particular,
if $A$ is \textit{quadratically uniform} (meaning that the $U^3$-norm
of $A-\d$ is sufficiently small), then 
\begin{equation*}
\E_{x,d}A(x)A(x+d)A(x+2d)A(x+3d)\approx\d^4.
\end{equation*}

The arithmetic progression $\{x,x+d,\dots,x+(k-1)d\}$ can be thought 
of as a collection of $k$ linear forms in $x$ and $d$. It can be
shown that for any collection of linear forms in any number of
variables, there exists a $k$ such that every set $A$ that is 
sufficiently uniform of degree $k$ contains about as many 
of the corresponding linear configurations as a random set
of the same density. This was shown by Green and Tao \cite{linprimes}, 
who generalized the argument in \cite{gowers}. The question of precisely which
$U^k$ norm is needed is a surprisingly subtle one. It is conjectured
in \cite{gowerswolf} that the answer is the smallest $k$ for which the $k$th
powers of the linear forms in question are linearly independent.
For instance, the configuration $\{x,x+d,x+2d,x+3d\}$ needs the
$U^3$-norm because $x-2(x+d)+(x+2d)=x^2-3(x+d)^2+3(x+2d)^2-(x+3d)^2=0$, 
but the cubes are linearly independent. A special case of this
result is proved in \cite{gowerswolf} using ``quadratic Fourier analysis'',
which we will discuss in \ref{higherfanal}: to prove the full
conjecture would require a theory of higher-degree Fourier
analysis that will probably exist in due course but which has 
not yet been sufficiently developed.

\subsection{Uniformity norms for graphs and hypergraphs.}

There are very close and important parallels between uniformity
of subsets of finite Abelian groups, and quasirandomness of graphs
and hypergraphs. For this reason, even though the relevant parts
of graph and hypergraph theory belong to extremal combinatorics, 
they have become part of additive combinatorics as well: one
could call them additive combinatorics without the addition.

Since that may seem a peculiar thing to say, let us briefly see
what these parallels are. Let $G$ be a graph on $n$ vertices.
One can think of $G$ as a two-variable function $G(x,y)$,
where $x$ and $y$ are vertices and $G(x,y)=1$ if $xy$ is an
edge and $0$ otherwise. Just as we may regard a subset $A$ of a
finite Abelian group as quasirandom if a certain norm of $A-\d$
is small, we can regard a graph as quasirandom if a certain
norm of the function $G-\d$ (where now $\d$ is the density
$\E_{x,y}G(x,y)$ of the graph $G$) is small. This norm is
given by the formula
\begin{equation*}
\|g\|_{GU^2}^4=\E_{x,x',y,y'}g(x,y)\ol{g(x,y')g(x',y)}g(x',y'),
\end{equation*}
which makes sense, and is useful, whenever $X$ and $Y$ are finite sets
and $g:X\times Y\ra\C$.  The theory of quasirandom graphs was
initiated by Thomason \cite{thomason} and more fully developed by Chung, Graham
and Wilson \cite{cgw}. The definition we have just given is equivalent to
the definition in the latter paper.
    
To see how this relates to the $U^2$ norm, let $X$ and $Y$ equal
a finite Abelian group $\Gamma$, let $f:\Gamma\ra\C$ and let 
$g(x,y)=f(x+y)$. Then
\begin{equation*}
\|g\|_{GU^2}^4=\E_{x,x',y,y'}f(x+y)\ol{f(x+y')f(x'+y)}f(x'+y').
\end{equation*}
The quadruples $(x+y,x+y',x'+y,x'+y')$ are uniformly distributed
over all quadruples $(a,b,c,d)$ such that $a+d=b+c$. Since the
same is true of all quadruples of the form $(x,x+a,x+b,x+a+b)$,
we see that the right-hand side of the above formula is nothing
other than $\|f\|_{U^2}$. 

A similar argument can be used to relate the higher-degree uniformity
norms to notions of quasirandomness for $k$-uniform \textit{hypergraphs}, 
which are like graphs except that instead of having edges, which
are pairs of vertices, one has hyperedges, which are $k$-tuples of
vertices. The following formula defines a norm on $k$-variable
functions:
\begin{equation*}
\|g\|_{HU^k}^{2^k}=\E_{x_1^0,x_1^1}\dots\E_{x_k^0,x_k^1}
\prod_{\e\in\{0,1\}^k}C^{|\e|}f(x_1^{\e_1},\dots,x_k^{\e_k}).
\end{equation*}
If $f(x_1,\dots,x_k)$ has the form $g(x_1+\dots+x_k)$, then
$\|f\|_{HU^k}=\|g\|_{U^k}$. 

A hypergraph $H$ of density $\d$ behaves in many
respects like a random hypergraph of density $\d$ if 
$\|H-\d\|_{HU^k}$ is small enough. For instance, if $k=3$, 
then the \textit{simplex density}, which is given by
the expression
\begin{equation*}
\E_{x,y,z,w}H(x,y,z)H(x,y,w)H(x,z,w)H(y,z,w)
\end{equation*}
is roughly $\d^4$, or what it would be in the random case.
More generally, if $\|H_i-\d\|_{HU^k}$ is small for 
$i=1,2,3,4$, then 
\begin{equation*}
\E_{x,y,z,w}H_1(x,y,z)H_2(x,y,w)H_3(x,z,w)H_4(y,z,w)
\end{equation*}
is again roughly $\d^4$. This assertion, which is proved
by repeated use of the Cauchy-Schwarz inequality (see
\cite{hypergraphs} for a more general result), implies that 
\begin{equation*}
\E_{x,y,z,w}A(-3x-2y-z)A(-2x-y+w)A(-x+z+2w)A(y+2z+3w)\approx\d^4
\end{equation*}
whenever $A$ is a subset of $\Z_N$ such that $\|A-\d\|_{U^3}$ is
small. But the four linear forms above form an arithmetic progression
of length 4 and common difference $x+y+z+w$: this is a sketch of what
turns out to be the most natural proof that the $U^3$-norm controls
arithmetic progressions of length 4.

These ideas can be developed to give a complete proof of Szemer\'edi's
theorem: see \cite{naglerodlschacht}, \cite{rodlskokan}, \cite{hypergraphs}, 
\cite{taohyper}.

\subsection{Easy structure theorems for the $U^2$-norm.}\label{u2struct}

A great deal of information about the $U^2$-norm comes from the 
following simple observation.

\begin{lemma}\label{u2fourier}
Let $G$ be a finite Abelian group and let $f:G\ra\C$. Then
$\|f\|_{U^2}=\|\hf\|_4$.
\end{lemma}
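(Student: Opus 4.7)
The plan is to rewrite the $U^2$-expression as the $L_2$-norm of a convolution of $f$ with a suitable reflection of itself, and then pass to the Fourier side using the convolution identity and Parseval's identity (both of which are stated in the excerpt).

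First I would introduce $\tilde f(y):=\overline{f(-y)}$ and compute $(f*\tilde f)(y)=\E_u f(u)\overline{f(u-y)}$ directly from the definition of convolution. Then $|(f*\tilde f)(y)|^2=\E_{u,u'}f(u)\overline{f(u-y)f(u')}f(u'-y)$, and averaging over $y$ (substituting $a=-y$ in one pair and $b$ for the shift between $u$ and $u'$) gives exactly
\begin{equation*}
\E_y|(f*\tilde f)(y)|^2=\E_{x,a,b}f(x)\overline{f(x+a)f(x+b)}f(x+a+b)=\|f\|_{U^2}^4,
\end{equation*}
so $\|f\|_{U^2}^4=\|f*\tilde f\|_2^2$.

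Next I would apply Parseval's identity on $G$, which gives $\|f*\tilde f\|_2^2=\|\widehat{f*\tilde f}\|_2^2$ (the right-hand side being the $\ell_2$-norm on $\hat G$). By the convolution identity, $\widehat{f*\tilde f}(\psi)=\hat f(\psi)\hat{\tilde f}(\psi)$, and from the formula $\hat f(\psi)=\E_x f(x)\overline{\psi(x)}$ a one-line computation shows $\hat{\tilde f}(\psi)=\overline{\hat f(\psi)}$. Therefore $\widehat{f*\tilde f}(\psi)=|\hat f(\psi)|^2$, and
\begin{equation*}
\|f\|_{U^2}^4=\sum_{\psi\in\hat G}\bigl||\hat f(\psi)|^2\bigr|^2=\sum_{\psi\in\hat G}|\hat f(\psi)|^4=\|\hat f\|_4^4,
\end{equation*}
whence $\|f\|_{U^2}=\|\hat f\|_4$.

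The only point requiring a little care is bookkeeping: the $U^2$-definition uses expectations (uniform measure on $G$), whereas the $\ell_4$-norm on $\hat G$ uses counting measure; Parseval's identity, as stated in the excerpt, is precisely what reconciles these two normalizations, so no additional factors appear. An alternative route would be to substitute the Fourier inversion formula for each copy of $f$ in the $U^2$-expression and use character orthogonality three times (over $x$, $a$, $b$) to force all four frequencies to coincide; this is more computational but independent of the convolution identity and serves as a useful sanity check.
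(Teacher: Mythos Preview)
Your proof is correct and follows essentially the same route as the paper: express $\|f\|_{U^2}^4$ as the $L_2$-norm squared of a convolution, then apply Parseval and the convolution identity to land on $\sum_\psi|\hat f(\psi)|^4$. The only cosmetic difference is that the paper convolves $f$ with itself (so that $\widehat{f*f}=\hat f^2$ and one computes $\langle \hat f^2,\hat f^2\rangle$), whereas you convolve $f$ with $\tilde f(y)=\overline{f(-y)}$ (so that $\widehat{f*\tilde f}=|\hat f|^2$); both variants yield $\|\hat f\|_4^4$ immediately.
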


\begin{proof} By the convolution identity and Parseval's identity,
\begin{equation*}
\|f\|_{U^2}^4=\E_{x+y=z+w}f(x)f(y)\ol{f(z)f(w)}=\sp{f*f,f*f}
=\sp{\hf^2,\hf^2}=\sum_\psi|\hf(\psi)|^4.
\end{equation*}
The result follows on taking fourth roots. 
\end{proof}

Now let us suppose that $\|f\|_2\leq 1$, and let us fix 
some small constant $\eta>0$. Then the number of characters
$\psi$ such that $|\hf(\psi)|\geq\eta$ is at most $\eta^{-2}$,
since $\sum_\psi|\hf(\psi)|^2=\|\hf\|_2^2=\|f\|_2^2=1$. Using
the inversion formula and this fact, we can split $f$ into two 
parts, $\sum_{\psi\in K}\hf(\psi)\psi$ and 
$\sum_{\psi\notin K}\hf(\psi)\psi$, where $K$ is the set of
all $\psi$ such that $|\hf(\psi)|\geq\eta$. Let us call these
two parts $g$ and $h$, respectively. The function $g$ 
involves a bounded number of characters, and characters are
functions that we can describe completely explicitly. Therefore,
it can be thought of as the ``structured'' part of $f$. As for
$h$, it is quasirandom, since
\begin{equation*}
\|h\|_{U^2}^4=\|\hh\|_4^4\leq\|\hh\|_2^2\|\hh\|_\infty^2
\leq\eta^2\|\hf\|_2^2\leq\eta^2.
\end{equation*}

Unfortunately, this simple decomposition turns out not to be 
very useful, for reasons that we shall explain later. We mention 
it in order to put some of our later results in perspective. The
same applies to the next result, which shows that one can obtain
a much stronger relationship between $\|h\|_{U^2}$ and the upper 
bound on the size of $K$ if one is prepared to tolerate a small
$L_2$-error as well. This result and its proof are part of the
standard folklore of additive combinatorics.

\begin{proposition}\label{easystrong}
Let $f$ be a function from a finite Abelian group $G$ to $\C$ and
suppose that $\|f\|_2\leq 1$. Let $\eta:\R_+\ra\R_+$ be a positive
decreasing function that tends to 0 and let $\e>0$. Then there is a
positive integer $m$ such that $f$ can be written as $f_1+f_2+f_3$,
where $f_1$ is a linear combination of at most $m$ characters,
$\|f_2\|_{U^2}\leq\eta(m)$, and $\|f_3\|_2\leq\e$.
\end{proposition}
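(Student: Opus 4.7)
The plan is to use Lemma \ref{u2fourier} (identifying $\|\cdot\|_{U^2}$ with $\|\cdot\|_4$ on the Fourier side) together with a pigeonhole argument over Fourier-coefficient thresholds. For any pair $\delta>\delta'>0$, I would split $f$ according to the magnitudes of its Fourier coefficients: $f_1=\sum_{|\hat f(\psi)|\geq\delta}\hat f(\psi)\psi$ (top), $f_3=\sum_{\delta'\leq|\hat f(\psi)|<\delta}\hat f(\psi)\psi$ (middle), and $f_2=\sum_{|\hat f(\psi)|<\delta'}\hat f(\psi)\psi$ (bottom). Parseval bounds the number of characters in $f_1$ by $1/\delta^2$; the inequality $\|\hat{f_2}\|_4^4\leq\|\hat{f_2}\|_\infty^2\|\hat{f_2}\|_2^2\leq(\delta')^2$ gives $\|f_2\|_{U^2}\leq\sqrt{\delta'}$; the middle piece $f_3$ is what must be controlled in $L_2$, and this is where the work lies.

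To get the character count and the $U^2$-bound to match the required $\eta(m)$, I would build a strictly decreasing sequence of thresholds $1=\delta_0>\delta_1>\ldots>\delta_J$ by choosing $\delta_{j+1}$ so that $\sqrt{\delta_{j+1}}\leq\eta(m_j)$, where $m_j=\lceil 1/\delta_j^2\rceil$ (concretely, $\delta_{j+1}=\min(\delta_j/2,\eta(m_j)^2)$ works, and we may assume $\eta\leq 1/2$ without loss of generality, since otherwise the conclusion is trivial). Then for any $j$, using the pair $(\delta_j,\delta_{j+1})$ as thresholds, $f_1$ involves at most $m_j$ characters and $\|f_2\|_{U^2}\leq\eta(m_j)$. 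Set $J=\lceil 1/\e^2\rceil$. The annuli $R_j=\{\psi:\delta_{j+1}\leq|\hat f(\psi)|<\delta_j\}$ are disjoint, so
\begin{equation*}
\sum_{j=0}^{J-1}\sum_{\psi\in R_j}|\hat f(\psi)|^2\leq\|\hat f\|_2^2=\|f\|_2^2\leq 1,
\end{equation*}
and by pigeonhole some index $j^*$ satisfies $\sum_{\psi\in R_{j^*}}|\hat f(\psi)|^2\leq 1/J\leq\e^2$. The decomposition associated to $j^*$, with $m=m_{j^*}$, satisfies all three requirements.

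The main obstacle is not technical but conceptual: one has to recognize why the naive energy-increment iteration (repeatedly subtract off the largest Fourier coefficient of the current residual) does not suffice. That approach gains energy at least $\eta(j)^4$ at the $j$th step, but since $\sum_j\eta(j)^4$ can converge when $\eta$ decreases sufficiently rapidly, there is no a priori bound on the number of iterations. The role of the $L_2$-error budget $\e$ is precisely to let us pay for skipping an annulus that carries too much Fourier mass, which is what makes the pigeonhole argument succeed where naive iteration stalls.
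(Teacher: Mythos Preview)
Your proof is correct and follows essentially the same approach as the paper's: split the Fourier expansion at two nested scales, put the top part into $f_1$, the bottom into $f_2$, the middle annulus into $f_3$, and run a pigeonhole over a sequence of scales to find an annulus with small $L_2$-mass. The only cosmetic difference is that the paper indexes the cutoffs by \emph{rank} (ordering the characters by $|\hat f(\psi_i)|$ and cutting at positions $m_r$ with $m_{r+1}\geq\eta(m_r)^{-4}$), whereas you index them by \emph{magnitude thresholds} $\delta_j$; the two parametrizations are interchangeable via Parseval, and your final paragraph on why a naive energy-increment fails is a nice addition.
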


\begin{proof}
Let $N=|G|$ and let us enumerate the dual group $\hG$ as
$\psi_1,\dots,\psi_N$ in such a way that the absolute values of
the Fourier coefficients $\hf(\psi_i)$ are in non-increasing order.
Choose an increasing sequence of positive integers $m_1,m_2,\dots$
in such a way that $m_{r+1}\geq\eta(m_r)^{-4}$ for every $r$.

Now let us choose $i$ and attempt to prove the result using the
decomposition $f_1=\sum_{i\leq m_r}\hf(\psi_i)\psi_i$,
$f_2=\sum_{i>m_{r+1}}\hf(\psi_i)\psi_i$, and
$f_3=\sum_{m_r<i\leq m_{r+1}}\hf(\psi_i)\psi_i$. Then $f_1$
is a linear combination of at most $m_r$ characters. Since
there can be at most $m_{r+1}$ characters $\psi$ with
$|\hf(\psi)|\geq m_{r+1}^{-1/2}$, we find that 
\begin{equation*}
\|f_2\|_{U^2}^4=\|\hf_2\|_4^4\leq m_{r+1}^{-1}\|\hf_2\|_2^2
\leq\eta(m_r)^4\|\hf\|_2^2\leq\eta(m_r)^4.
\end{equation*}
Therefore, we are done if $\|f_3\|_2\leq\e$. But the possible
functions $f_3$ (as $r$ varies) are disjoint parts of the Fourier 
expansion of $f$, so at most $\e^{-2}$ of them can have norm
greater than $\e$. Therefore, we can find $r\leq\e^{-2}$ such
that the proposed decomposition works. 
\end{proof}

There is nothing to stop us taking $m_1=1$. The proof then gives us
the desired decomposition for some $m$ that is bounded above by a
number that results from starting with 1 and applying the function
$t\mapsto\eta(t)^{-4}$ at most $\e^{-2}$ times.  Although Proposition
\ref{easystrong} is still not all that useful, it resembles other
results that are, as we shall see in due course. In those results, it
is common to require $\eta(m)$ to be exponentially small: the
resulting bound is then of tower type.

\subsection{Inverse theorems.}\label{inverse}

A \textit{direct theorem} in additive number theory is one that
starts with a description of a set and uses that description to
prove that the set has certain additive properties. For instance,
the statement that every positive integer is the sum of four
squares starts with the explicitly presented set $S$ of all perfect 
squares, and proves that the four-fold sumset $S+S+S+S$ is the
whole of $\N\cup\{0\}$. An \textit{inverse theorem} is a result
that goes in the other direction: one starts with a set $A$ that
is assumed to have certain properties, and attempts to find 
some kind of description of $A$ that explains those properties. 
Ideally, this description should be so precise that it actually
characterizes the properties in question: a set $A$ has the
properties if and only if it satisfies the description.

A remarkable inverse theorem, which lies at the heart of many recent
results in additive combinatorics, is a theorem of Freiman 
\cite{freiman} (later given a considerably more transparent proof 
by Ruzsa \cite{ruzsa}) that characterizes sets
that have small sumsets. If $A$ is a set of $n$ integers, then it is
easy to show that the sumset $A+A$ has size at least $2n-1$ and at
most $n(n+1)/2$.  What can be said about $A$ if the size of the sumset
is close to its minimum, in the sense that $|A+A|\leq C|A|$ for some
fixed constant $C$? A simple example of such a set is an arithmetic
progression. A slightly less simple example is a set $A$ that 
is contained in an arithmetic progression of length at most 
$Cn/2$. A less simple example altogether is a ``two-dimensional
arithmetic progression'': that is, a set of the form
$\{x_0+rd_1+sd_2:0\leq r<t_1, 0\leq s<t_2\}$. If $A$ is such
a set, then $|A+A|\leq 4|A|$, and more generally if $A$ is a
$k$-dimensional arithmetic progression (the definition of
which is easy to guess), then $|A+A|\leq 2^k|A|$. As in the
one-dimensional case, one can pass to large subsets and obtain
more examples. Freiman's theorem states that one has then 
exhausted all examples.

\begin{theorem}\label{freiman}
For every $C$ there exist $k$ and $K$ such that every set
$A$ of $n$ integers such that the sumset $A+A$ has size
at most $Cn$ is contained in an arithmetic progression
of dimension at most $k$ and cardinality at most $Kn$. 
\end{theorem}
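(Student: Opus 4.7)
The plan is to follow Ruzsa's reworking of Freiman's argument: pass to a finite cyclic group, use Fourier analysis there to locate a generalized arithmetic progression (GAP) inside an iterated sumset of $A$, and then cover $A$ by translates of it. The first ingredient is the Pl\"unnecke-Ruzsa inequality, which upgrades the hypothesis $|A+A| \leq C|A|$ to $|kA - \ell A| \leq C^{k+\ell}|A|$ for all nonnegative integers $k, \ell$. I would prove this by the standard graph-theoretic argument using Menger's theorem on the addition bipartite graph; in particular it gives $|8A - 8A| \leq C^{16}|A|$. Next, Ruzsa's modeling lemma says that if $|8A - 8A| \leq M|A|$ then there are a prime $N \leq 2M|A|$ and a Freiman $8$-isomorphism from $A$ onto some $A' \subset \Z_N$ of the same size. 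One proves this by dilating $A$ by a random scalar mod $N$ and using a pigeonhole count over $8$-fold coincidences to find a scalar for which the induced map is injective on $8A - 8A$, and hence a Freiman $8$-isomorphism.

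With $A' \subset \Z_N$ in hand, I would apply Bogolyubov's lemma: expanding $1_{A'} * 1_{A'} * 1_{-A'} * 1_{-A'}$ via the convolution identity, the contribution of the large Fourier coefficients of $1_{A'}$ (of which, by Parseval, there are at most $O_C(1)$) is strictly positive on the Bohr set $B = \{x \in \Z_N : |\psi(x) - 1| < 1/4 \text{ for every } \psi \in \Gamma\}$, where $\Gamma$ is the collection of those frequencies. The remaining Fourier mass is easily controlled, and the upshot is that $B \subseteq 2A' - 2A'$. The key and hardest step is then to extract a proper generalized arithmetic progression $P \subseteq B$ of rank at most $|\Gamma|$ and size at least $c(C) \cdot N$, via Minkowski's second theorem from the geometry of numbers applied to the lattice associated with $\Gamma$. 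It is here that the values of $k$ and $K$ are ultimately determined.

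Finally, I would conclude with Ruzsa's covering lemma. Since $P \subseteq 2A' - 2A'$, the sumset $A' + P$ lies inside $3A' - 2A'$ and so $|A' + P| \leq C^5|A'|$; on the other hand $|P| \geq c(C) N \gtrsim c(C) |A'|$, so $|A' + P|/|P|$ is bounded by a function of $C$. The covering lemma then produces a set $T$ of size $O_C(1)$ with $A' \subseteq T + (P - P)$, and thus $A'$ lies in a GAP of rank at most $|\Gamma| + |T| = O_C(1)$ and size $O_C(|A'|)$. Pulling this back through the Freiman $8$-isomorphism yields a corresponding GAP in $\Z$ containing $A$, because the defining sum relations of the GAP involve at most $8$ terms from $A'$ and are therefore preserved. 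As noted, the principal obstacle is the Bohr-to-GAP step: it is where the geometry of numbers does the real work and where the explicit dependence of $k$ and $K$ on $C$ is pinned down.
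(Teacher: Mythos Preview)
The paper does not give a proof of this theorem at all: it is stated as background in the survey section on inverse theorems, with the proof attributed to Freiman and (in a more transparent form) to Ruzsa, and with subsequent improvements by Chang and Green--Ruzsa cited. There is therefore nothing in the paper to compare your argument against.

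That said, your outline is a faithful sketch of Ruzsa's proof and is essentially correct. The ingredients you list --- Pl\"unnecke--Ruzsa to control iterated sumsets, the Ruzsa modeling lemma to pass to $\Z_N$, Bogolyubov's lemma to find a Bohr set inside $2A'-2A'$, Minkowski's second theorem to extract a large proper GAP from the Bohr set, and the Ruzsa covering lemma to finish --- are exactly the standard ones, and you have correctly identified the Bohr-to-GAP step via the geometry of numbers as the crux. One small quibble: Pl\"unnecke's original argument does use a Menger-type flow argument, but nowadays one would more likely cite Petridis's short inductive proof; either is fine for your purposes.
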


Freiman's theorem has been extremely influential, in large part
because of Ruzsa's proof, which was extremely elegant and
conceptual, and gave much better bounds than Freiman's argument. These
bounds have subsequently been improved by Chang \cite{chang}, who added further
interesting ingredients to Ruzsa's argument. A generalization of Freiman's
theorem to subsets of an arbitrary Abelian group was proved by Green
and Ruzsa \cite{greenruzsa}.

The notion of an inverse theorem makes sense also for functions
defined on Abelian groups. For instance, here is a simple inverse
theorem about functions with large $U^2$-norm.

\begin{proposition} \label{u2inverse}
Let $c>0$, let $G$ be a finite Abelian group,
let $f:G\ra\C$ be a function such that $\|f\|_2\leq 1$ and suppose
that $\|f\|_{U^2}\geq c$.  Then there exists a character $\psi$ such
that $|\sp{f,\psi}|\geq c^2$.
\end{proposition}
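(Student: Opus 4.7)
The plan is to combine Lemma \ref{u2fourier} with a standard $\ell^p$-norm interpolation. Since $\|f\|_{U^2} = \|\hat f\|_4$, the hypothesis gives $\sum_\psi |\hat f(\psi)|^4 \geq c^4$, while Parseval gives $\sum_\psi |\hat f(\psi)|^2 = \|f\|_2^2 \leq 1$.

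First I would estimate the $\ell^4$-norm by pulling out the maximum:
\begin{equation*}
\sum_\psi |\hat f(\psi)|^4 \leq \Bigl(\max_\psi |\hat f(\psi)|\Bigr)^2 \sum_\psi |\hat f(\psi)|^2 \leq \Bigl(\max_\psi |\hat f(\psi)|\Bigr)^2.
\end{equation*}
Combined with the lower bound, this yields $\max_\psi |\hat f(\psi)| \geq c^2$.

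Finally, I would observe that by the definition of the Fourier transform, $\hat f(\psi) = \E_x f(x)\overline{\psi(x)} = \sp{f,\psi}$, so the character $\psi$ achieving the maximum satisfies $|\sp{f,\psi}| \geq c^2$, as required.

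There is no real obstacle here: the whole argument is a one-line consequence of Lemma \ref{u2fourier} and the trivial inequality $\|a\|_4^4 \leq \|a\|_\infty^2 \|a\|_2^2$. The only thing to keep in mind is that $\ell_p$-norms on $\hat G$ are defined with sums (not expectations), which is what makes the Parseval bound $\|\hat f\|_2 \leq 1$ come out cleanly.
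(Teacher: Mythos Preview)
Your proof is correct and is essentially identical to the paper's: the paper writes the same estimate in the compressed form $c^2\leq\|\hf\|_4^2\leq\|\hf\|_\infty\|\hf\|_2\leq\|\hf\|_\infty$, which is exactly your inequality $\|a\|_4^4\leq\|a\|_\infty^2\|a\|_2^2$ combined with Parseval.
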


\begin{proof} By Lemma \ref{u2fourier} and our assumptions about $f$, 
\begin{equation*}
c^2\leq\|\hf\|_4^2\leq\|\hf\|_\infty\|\hf\|_2\leq\|\hf\|_\infty,
\end{equation*}
which is what is claimed.
\end{proof}

Conversely, and without any assumption about $\|f\|_2$, if there
exists a character $\psi$ such that $\sp{f,\psi}\geq c$, then
$\|f\|_{U^2}=\|f\|_4\geq c^{1/4}$. Therefore, correlation with a
character ``explains'' the largeness of the $U^2$-norm.

What about the $U^3$-norm? This turns out to be a much deeper question.
As our remarks earlier have suggested, quadratic functions come into
play when one starts to think about it. For example, if $f:\Z_N\ra\C$
is the function $x\mapsto\omega^{2rx^2}$ for some $r$ (where $\omega$
is once again equal to $\exp(2\pi i/N)$), then the identity
\begin{equation*}
x^2-(x+a)^2-(x+b)^2-(x+c)^2+(x+a+b)^2+(x+a+c)^2+(x+b+c)^2
-(x+a+b+c)^2=0
\end{equation*}
implies easily that $\|f\|_{U^3}=1$. However, it is also easy
to show that $f$ does not correlate significantly with any 
character. Therefore, we are forced to consider quadratic 
functions. If $q$ is a quadratic function, then let us call 
the function $\omega^q$ a \textit{quadratic phase function}.

It is tempting to conjecture that a bounded function $f$ with 
$U^3$-norm at least $c$ must correlate with a quadratic phase function,
meaning that $\E_xf(x)\omega^{q(x)}\geq c'$ for some quadratic
function $q$ and some constant $c'$ that depends on $c$ only. However,
although such a correlation is a sufficient condition for the
$U^3$-norm of $f$ ot be large, it is not necessary, because there are 
``multidimensional'' examples. For instance, if $P$ is the
two-dimensional arithmetic progression
$\{x_0+rd_1+sd_2:0\leq r<t_1, 0\leq s<t_2\}$, then we can define
something like a quadratic form $q$ on $P$ by the formula
$q(x_0+rd_1+sd_2)=ar^2+brs+cs^2$. We can then define a function
$f$ to be $\omega^{q(x)}$ when $x\in P$ and $0$ otherwise. Let us
call such a function a \textit{generalized quadratic phase function}.
It is not hard to prove that such functions have large $U^3$ norms, and
that they do not have to correlate with ordinary quadratic phase
functions. 

In \cite{gowers}, the following ``weak inverse theorem'' was proved for all 
$U^k$ norms. 

\begin{theorem}
Let $c>0$ be a constant and let $f:\Z_N\ra\C$ be a function such that
$\|f\|_\infty\leq 1$ and $\|f\|_{U^k}\geq c$. Then there is a
partition of $\Z_N$ into arithmetic progressions $P_i$ of length at
least $N^{\a(c,k)}$, and for each $P_i$ there is a polynomial $r_i$ of
degree at most $k$ such that, writing $\pi_i$ for the density
$|P_i|/N$ of $P_i$, we have $\sum_i\pi_i|\E_{x\in
P_i}f(x)\omega^{r_i(x)}|\geq c/2$.
\end{theorem}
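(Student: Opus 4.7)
The plan is to prove the theorem by induction on $k$, with the base case $k=2$ being essentially Proposition \ref{u2inverse}: a large $U^2$-norm gives a character (a degree-one polynomial phase) with which $f$ correlates on all of $\Z_N$, and we take the trivial partition. For the inductive step from $k-1$ to $k$, the starting point is the fundamental derivative identity
\begin{equation*}
\|f\|_{U^k}^{2^k}=\E_a\|\Delta_a f\|_{U^{k-1}}^{2^{k-1}},
\end{equation*}
where $\Delta_a f(x)=f(x+a)\ol{f(x)}$. Combined with the hypothesis $\|f\|_{U^k}\geq c$ and the fact that $\|\Delta_a f\|_\infty\leq 1$, a simple averaging argument yields a set $A\subset\Z_N$ of density at least some $c'=c'(c,k)>0$ such that $\|\Delta_a f\|_{U^{k-1}}\geq c'$ for every $a\in A$.

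For each such $a$, I would apply the inductive hypothesis to $\Delta_a f$ to obtain a partition of $\Z_N$ into progressions and a polynomial $p_{a,i}$ of degree at most $k-1$ on each part, producing a substantial correlation of $\Delta_a f$ with the associated polynomial phase functions. Heuristically, if $\Delta_a f(x)$ correlates with $\omega^{p_a(x)}$ then $f(x+a)\ol{f(x)}\approx\omega^{p_a(x)}$, and so $f$ ``should'' be a phase $\omega^{q(x)}$ with $q(x+a)-q(x)\approx p_a(x)$; since the left-hand side has degree one less than $q$, this forces $q$ to have degree at most $k$. The real work of the proof is to turn this heuristic into a theorem by (i) extracting, via pigeonholing over the partitions, a uniform choice of degree-$(k-1)$ coefficients for $p_a$ on a common long progression for many $a\in A$, and (ii) showing that these top-degree coefficients behave additively in $a$, in the sense that for many triples $(a,b,a+b)$ one has the matching relation among the corresponding leading coefficients. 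The additivity comes from a Cauchy-Schwarz argument applied to the derivative identity iterated twice, which turns the $U^k$-assumption into information about $\Delta_a\Delta_b f$ symmetric in $a,b$.

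Once this additive structure is in hand, Freiman's theorem (Theorem \ref{freiman}), or more precisely a Balog-Szemer\'edi-type refinement of it, is invoked to locate a genuine long arithmetic progression of values of $a$ on which the leading coefficients are well-approximated by a linear function of $a$; integrating this linear dependence in $a$ produces a genuine degree-$k$ polynomial phase $\omega^{q(x)}$ correlating with $f$ on some reasonably long arithmetic progression. The remaining step is cosmetic: a further pigeonhole partitioning of $\Z_N$ into progressions of length at least $N^{\a(c,k)}$ on which $q$ can be treated as an honest polynomial produces the partition $\{P_i\}$ and the collection of polynomials $r_i$ required in the statement, and the total correlation $\sum_i\pi_i|\E_{x\in P_i}f(x)\omega^{r_i(x)}|$ can be made at least $c/2$ by absorbing error terms into the length bound $\a(c,k)$.

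The main obstacle is the ``integration'' step: turning a family of degree-$(k-1)$ phases indexed by $a$ into a single degree-$k$ phase. This is precisely where Freiman's theorem (and hence the deterioration of the bound $\a(c,k)$, which becomes a tower in $1/c$) enters, and it is also the step most sensitive to the multidimensional examples discussed in subsection \ref{inverse}, since the leading coefficients genuinely live in a generalized arithmetic progression rather than in a genuine one. All other steps (Cauchy-Schwarz, pigeonholing of partitions, conversion of correlation into the sum-over-$P_i$ form) are routine by comparison.
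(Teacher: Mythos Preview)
The paper does not actually prove this theorem: it is quoted from \cite{gowers} and stated without proof (the sentence introducing it reads ``In \cite{gowers}, the following `weak inverse theorem' was proved for all $U^k$ norms''). So there is no in-paper argument to compare against.

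That said, your outline is essentially the strategy of \cite{gowers}. The induction on $k$ via the identity $\|f\|_{U^k}^{2^k}=\E_a\|\Delta_a f\|_{U^{k-1}}^{2^{k-1}}$, the passage to a dense set of $a$ with large $\|\Delta_af\|_{U^{k-1}}$, the extraction of additive structure among the leading coefficients of the phases $p_a$ via a Balog--Szemer\'edi argument followed by Freiman's theorem, and the ``integration'' to a degree-$k$ phase are exactly the ingredients used there. A couple of points deserve comment. First, you omit the Weyl-type equidistribution step: once one has correlation with a polynomial phase on a long progression, one still needs exponential-sum estimates to refine into progressions on which the phase is essentially linear, and this is what actually produces the partition $\{P_i\}$ with the length bound $N^{\alpha(c,k)}$. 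Second, the remark that the bound ``becomes a tower in $1/c$'' is off: a principal point of \cite{gowers} is that $\alpha(c,k)$ is polynomial in $c$ for each fixed $k$ (the tower-type losses in Szemer\'edi's theorem come from iterating this inverse theorem, not from the inverse theorem itself). Finally, the base case needs a small adjustment: Proposition~\ref{u2inverse} gives correlation $c^2$, not $c/2$, so the constant on the right-hand side must be allowed to depend on $c$ through the induction; recovering exactly $c/2$ at the end is not automatic and is more than ``absorbing error terms into $\alpha(c,k)$''.
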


This result was the main step in the proof of Szemer\'edi's theorem
given in \cite{gowers}. The reason that this is a ``weak inverse theorem'' is 
that the converse is far from true. The result shows that $f$ correlates
with a function that is made out of many fragments of polynomial
phase functions, but it does not provide what one might hope for:
correlation with a single generalized polynomial phase function.
However, the proof strongly suggested that such a result should be
true, and Green and Tao, by adding some important further ingredients, have 
established a strong inverse theorem in the quadratic case \cite{greentaou3}. 
Let us state their result a little imprecisely.

\begin{theorem}\label{u3inverse}
Let $c>0$ be a constant and let $f:\Z_N\ra\C$ be a function such that
$\|f\|_\infty\leq 1$ and $\|f\|_{U^k}\geq c$. Then there exists a
constant $c'$ that depends on $c$ only, and a generalized quadratic
phase function $g$, such that $|\sp{f,g}|\geq c'$.
\end{theorem}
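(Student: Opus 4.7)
The plan is to reduce the $U^3$ hypothesis to a parametrised family of $U^2$ hypotheses by differencing, and then to extract a quadratic phase from the additive behaviour of the resulting linear frequencies. Writing $\Delta_h f(x)=f(x+h)\overline{f(x)}$, one has the identity $\|f\|_{U^3}^8=\E_h\|\Delta_h f\|_{U^2}^4$, so the assumption $\|f\|_{U^3}\geq c$ forces $\|\Delta_h f\|_{U^2}$ to be bounded below for a dense set $H\subset\Z_N$ of values of $h$. Applying Proposition \ref{u2inverse} to each such $\Delta_h f$ yields a map $\phi:H\to\Z_N$ and a constant $c_1>0$ (depending only on $c$) such that $|\E_x \Delta_h f(x)\omega^{-\phi(h)x}|\geq c_1$ for every $h\in H$.

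The next step is to show that $\phi$ has approximate additive structure. Substituting the approximate formula $\Delta_h f(x)\approx\omega^{\phi(h)x}$ into the expansion of $\|f\|_{U^3}^8$ (which naturally contains products of the form $\Delta_{h_1}f\cdot\overline{\Delta_{h_2}f}$), and applying Cauchy--Schwarz carefully, one obtains a bound which, when saturated, forces the relation $\phi(h_1)+\phi(h_2)=\phi(h_3)+\phi(h_4)$ to hold (or to hold approximately) for a positive proportion of additive quadruples $h_1+h_2=h_3+h_4$ in $H$. In other words, $\phi$ is an approximate Freiman homomorphism of order $2$ on the dense set $H$.

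The main obstacle, and the step that forces the conclusion to involve \emph{generalized} rather than honest quadratic phase functions, is to convert this approximate additive structure into a genuine global quadratic form. Here I would invoke the Balog--Szemer\'edi--Gowers theorem to pass to a subset $H'\subset H$ of comparable density on which $\phi$ is a true Freiman homomorphism, and then Freiman's theorem (Theorem \ref{freiman}) to place $H'$ inside a multidimensional arithmetic progression $P$ on which $\phi$ extends to a genuine linear map. One can then ``antidifferentiate'' $\phi$ on $P$ to construct a quadratic form $q$ on $P$ whose discrete derivative in direction $h$ matches $\phi(h)x$, so that the original correlation can be reinterpreted as saying that $f\,\overline{\omega^q}$ has large $U^2$-norm on $P$. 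A final application of Proposition \ref{u2inverse} produces an extra linear phase which, combined with $\omega^q$ and the characteristic function of $P$, gives the sought generalized quadratic phase function $g$ with $|\sp{f,g}|\geq c'$. The hard part, both quantitatively and conceptually, is the Balog--Szemer\'edi--Gowers plus Freiman step: tracking all the approximations through these tools in a way that preserves reasonable dependence on $c$ is delicate, and it is precisely here that one is forced to restrict $q$ to a progression $P$, which accounts for the word ``generalized'' in the statement.
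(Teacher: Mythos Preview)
The paper does not actually prove this theorem: it is quoted as a result of Green and Tao, with a citation to \cite{greentaou3}, and no argument is given. So there is no ``paper's own proof'' to compare against.

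That said, your outline is essentially the Gowers/Green--Tao route and is the right shape. A couple of points where your sketch is looser than the real argument: the passage from the correlations $|\E_x \Delta_h f(x)\omega^{-\phi(h)x}|\geq c_1$ to many additive quadruples in the graph of $\phi$ is not obtained by ``substituting $\Delta_h f\approx\omega^{\phi(h)x}$ into $\|f\|_{U^3}^8$'' but by a separate Cauchy--Schwarz/second-moment calculation (in Gowers' argument one expands $\E_{h_1+h_2=h_3+h_4}\bigl|\E_x\prod_i\Delta_{h_i}f\bigr|$ and shows it is large), and the ``antidifferentiation'' step requires an additional symmetry argument to ensure the bilinear form one obtains is genuinely (close to) symmetric, so that it really comes from a quadratic. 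The Balog--Szemer\'edi--Gowers plus Freiman step you flag as the crux is indeed where most of the work lies, and is exactly the point at which Green and Tao's ``further ingredients'' go beyond the weak inverse theorem of \cite{gowers}.
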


\subsection{Higher Fourier analysis.}\label{higherfanal}

As we have seen, the $U^2$-norm of a function $f$ is equal to the 
$\ell_4$-norm of its Fourier transform, and this observation leads 
quickly to a decomposition of functions into a structured part and
a quasirandom part. Is there a comparable result for the $U^3$-norm? 
The inverse theorem of Green and Tao suggests that we should try
to decompose $f$ into generalized quadratic phase functions. However,
there are far more than $N$ of these, so they do not form an 
orthonormal basis, or indeed a basis of any kind. One might nevertheless
hope for some canonical way of decomposing a function, but it is
far from clear that there is one---certainly, nobody has come close
to finding one. 

However, one can still hope for a structure theorem that resembles
Proposition \ref{easystrong}. We would expect it to say that a 
function $f$ can be decomposed into a linear combination of a
small number of generalized quadratic phase functions, plus a
function with very small $U^3$-norm, plus a function that is
small in $L_2$. Green and Tao deduced such a result from their
inverse theorem, and thereby initiated a form of \textit{quadratic
Fourier analysis}. In \cite{gowerswolf}, a different method was given for 
deducing somewhat different decomposition theorems from inverse 
theorems. The main ingredient of this method was the Hahn-Banach
theorem: the proof will be sketched in the next section. This
gave an alternative form of quadratic Fourier analysis, which
provided much better bounds for the results of that paper (the ones
that concerned controlling systems of linear forms with $U^k$-norms).

We shall have more to say about higher Fourier analysis later in
the paper.

\subsection{Easy structure theorems for graphs.}

We have already seen that the $U^2$-norm of the one-variable function
$g$, defined on a finite Abelian group $G$, can be regarded as the
$GU^2$-norm of the two-variable function $f(x,y)=g(x+y)$. The
relationship does not stop here, however. If $\psi$ is a character,
then, for any $x$, 
\begin{equation*}
\E_yg(x+y)\psi(-y)=\psi(x)\E_yg(x+y)\psi(-x-y)=\hg(\psi)\psi(x)
\end{equation*}
This shows that characters are similar to eigenvectors of the
symmetric matrix $f(x,y)$, except that they are mapped to 
multiples of their complex conjugates. However, it is notable
that the corresponding ``eigenvalues'' are the Fourier coefficients
of the function $g$. This observation suggests, correctly as it
turns out, that eigenvalues play a similar role for real symmetric 
matrices to the role played by Fourier coefficients for functions
defined on finite Abelian groups.

We briefly illustrate this by proving a result that is analogous to
Proposition \ref{easystrong}. First, we prove a well-known lemma
relating the $GU^2$-norm to eigenvalues. It will tie in better with
our previous notation (and with applications of matrices to graph
theory) if we use a slightly unconventional association between
matrices and linear maps, as we did above. Given a matrix $f(x,y)$ and
a function $u(y)$ we shall think of $fu(x)$ as the quantity
$\E_yf(x,y)u(y)$ rather than the same thing with a sum.

The finite-dimensional spectral theorem tells us that a real
symmetric matrix $f(x,y)$ has an orthonormal basis of eigenvectors. If
these are $u_1,\dots,u_n$ and the corresponding eigenvalues are
$\lambda_1,\dots,\lambda_n$, then we can express this by saying that
\begin{equation*}
f(x,y)=\sum_i\lambda_iu_i\otimes u_i,
\end{equation*}
where $u\otimes v$ denotes the function $u(x)v(y)$. To see why these
are the same, consider the effect of each side in turn on a basis 
vector $u_j$. On the one hand, we have $\E_yf(x,y)u_j(y)=\lambda_iu_j(x)$
(by our unconventional definition of matrix multiplication) while on
the other we have
\begin{equation*}
\E_y\sum_i\lambda_iu_i\otimes u_i(x,y)u_j(y)
=\sum_i\lambda_iu_i(x)\E_yu_i(y)u_j(y)=\sum_i\lambda_iu_i(x)\d_{ij}
=\lambda_ju_j(x)
\end{equation*}
by the orthonormality (with respect to the $L_2$-norm) of the 
eigenvectors.

\begin{lemma}
Let $X$ be a finite set and let $f$ be a symmetric real-valued 
function defined on $X^2$. Let the eigenvalues of $f$ be
$\lambda_1,\dots,\lambda_n$. Then $\|f\|_{GU^2}^4=\sum_r\lambda_r^4$.
\end{lemma}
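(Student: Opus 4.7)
The plan is to expand the definition of the $GU^2$-norm, rewrite it as the squared $L_2$-norm of $f$ composed with itself as a kernel, and then read off the answer from the spectral decomposition.

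First I would unfold the definition. Since $f$ is real, the complex conjugates disappear and
\begin{equation*}
\|f\|_{GU^2}^4=\E_{x,x',y,y'}f(x,y)f(x,y')f(x',y)f(x',y').
\end{equation*}
Grouping the $y$-terms together and the $y'$-terms together, and noting that both factors then depend only on $x$ and $x'$, this equals
\begin{equation*}
\E_{x,x'}\bigl(\E_y f(x,y)f(x',y)\bigr)^2.
\end{equation*}
Write $g(x,x')=\E_y f(x,y)f(x',y)$, so $\|f\|_{GU^2}^4=\E_{x,x'}g(x,x')^2$.

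Second, I would substitute the spectral decomposition $f=\sum_i\lambda_i u_i\otimes u_i$ into this inner expectation. The orthonormality of the $u_i$ in the $L_2$ sense (i.e.\ with respect to $\E_y$) gives $\E_y u_i(y)u_j(y)=\d_{ij}$, so
\begin{equation*}
g(x,x')=\sum_{i,j}\lambda_i\lambda_j u_i(x)u_j(x')\E_y u_i(y)u_j(y)=\sum_i\lambda_i^2 u_i(x)u_i(x').
\end{equation*}
In other words, $g=\sum_i\lambda_i^2\, u_i\otimes u_i$ is the spectral decomposition of the kernel $g$, whose eigenvalues are the $\lambda_i^2$.

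Third, I would apply the same orthonormality argument one more level up. Expanding $g(x,x')^2$ and averaging in both $x$ and $x'$ independently gives
\begin{equation*}
\E_{x,x'}g(x,x')^2=\sum_{i,j}\lambda_i^2\lambda_j^2\bigl(\E_x u_i(x)u_j(x)\bigr)\bigl(\E_{x'}u_i(x')u_j(x')\bigr)=\sum_i\lambda_i^4,
\end{equation*}
which is the claimed identity. There is no real obstacle here beyond keeping track of the unconventional convention that matrix multiplication uses $\E_y$ rather than $\sum_y$; once that is absorbed, the two uses of orthonormality of $(u_i)$ (once to collapse the $y$-sum, once to collapse the $x,x'$-sum) do all the work, and the result is essentially just the statement that $\|f\|_{GU^2}^4$ is the square of the Hilbert--Schmidt norm of $f*f$, which in the eigenbasis is diagonal with entries $\lambda_i^2$.
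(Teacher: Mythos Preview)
Your proof is correct and follows essentially the same approach as the paper: substitute the spectral decomposition $f=\sum_i\lambda_i u_i\otimes u_i$ into the expression for $\|f\|_{GU^2}^4$ and use orthonormality of the $u_i$ to collapse the sums. The only difference is organizational---you carry out the collapse in two stages (first in $y,y'$ to obtain the kernel $g=\sum_i\lambda_i^2 u_i\otimes u_i$, then in $x,x'$), whereas the paper does it in a single quadruple sum over indices $p,q,r,s$ that reduces via four Kronecker deltas.
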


\begin{proof}
All results of this kind are proved by expanding the expression for
$\|f\|_{GU^2}^4$ in terms of the spectral decomposition 
$\sum_r\lambda_ru_r\otimes u_r$ of $f$. 
\begin{eqnarray*}
\|f\|_{GU^2}^4&=&\E_{x,x'}\E_{y,y'}f(x,y)f(x,y')f(x',y)f(x',y')\\
&=&\E_{x,x'}\E_{y,y'}\sum_{p,q,r,s}\lambda_p\lambda_q\lambda_r\lambda_s
u_p(x)u_p(y)u_q(x)u_q(y')u_r(x')u_r(y)u_s(x')u_s(y')\\
&=&\sum_{p,q,r,s}\lambda_p\lambda_q\lambda_r\lambda_s
\d_{pq}\d_{pr}\d_{rs}\d_{qs}\\
&=&\sum_r\lambda_r^4\\
\end{eqnarray*}
as claimed.
\end{proof}

A similar but easier proof establishes that $\|f\|_2^2=\sum_r\lambda_r^2$.

The next result is a direct analogue for symmetric two-variable 
real functions (and therefore in particular for graphs) of
Proposition \ref{easystrong}

\begin{proposition}\label{easygraphstrong}
Let $X$ be a finite set and let $f$ be a symmetric real-valued
function on $X^2$ such that $\|f\|_2\leq 1$. Let $\eta:\R_+\ra\R_+$ be
a positive decreasing function that tends to 0 and let $\e>0$. Then
there is a positive integer $m$ such that $f$ can be written as
$f_1+f_2+f_3$, where $f_1$ is a linear combination of at most $m$
orthonormal functions of the form $u\otimes u$, 
$\|f_2\|_{GU^2}\leq\eta(m)$, and $\|f_3\|_2\leq\e$.
\end{proposition}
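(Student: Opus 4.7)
The plan is to mimic the proof of Proposition~\ref{easystrong} step by step, replacing the Fourier expansion by the spectral decomposition. The two identities we need are exactly the ones just proved: $\|f\|_{GU^2}^4=\sum_r\lambda_r^4$ and $\|f\|_2^2=\sum_r\lambda_r^2$, which play the roles of $\|f\|_{U^2}^4=\|\hf\|_4^4$ and Parseval's identity respectively.

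First I would apply the spectral theorem to write $f=\sum_i\lambda_iu_i\otimes u_i$, where $u_1,\dots,u_n$ is an $L_2$-orthonormal basis of eigenvectors and the eigenvalues are ordered so that $|\lambda_1|\geq|\lambda_2|\geq\dots$. Then I would choose an increasing sequence of positive integers $m_1<m_2<\dots$ with $m_{r+1}\geq\eta(m_r)^{-4}$, and for each $r$ consider the candidate decomposition
\begin{equation*}
f_1=\sum_{i\leq m_r}\lambda_iu_i\otimes u_i,\qquad
f_3=\sum_{m_r<i\leq m_{r+1}}\lambda_iu_i\otimes u_i,\qquad
f_2=\sum_{i>m_{r+1}}\lambda_iu_i\otimes u_i.
\end{equation*}
The first piece is automatically a linear combination of at most $m_r$ orthonormal functions of the form $u\otimes u$.

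Next I would verify the bound on $\|f_2\|_{GU^2}$. Because $\sum_i\lambda_i^2\leq 1$, at most $m_{r+1}$ eigenvalues can have absolute value at least $m_{r+1}^{-1/2}$, so every $\lambda_i$ with $i>m_{r+1}$ satisfies $\lambda_i^2\leq m_{r+1}^{-1}\leq\eta(m_r)^4$. Applying the eigenvalue identity for the $GU^2$-norm to $f_2$ gives
\begin{equation*}
\|f_2\|_{GU^2}^4=\sum_{i>m_{r+1}}\lambda_i^4
\leq\bigl(\max_{i>m_{r+1}}\lambda_i^2\bigr)\sum_{i>m_{r+1}}\lambda_i^2
\leq\eta(m_r)^4,
\end{equation*}
so $\|f_2\|_{GU^2}\leq\eta(m_r)$, exactly as required once we take $m=m_r$.

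Finally I would use a pigeonhole argument to control $\|f_3\|_2$. The various candidate middle pieces (one for each $r$) are supported on disjoint blocks of the spectral decomposition, so by orthogonality their squared $L_2$-norms sum to at most $\|f\|_2^2\leq 1$. Consequently at most $\e^{-2}$ of them can have $L_2$-norm exceeding $\e$, and some $r\leq\e^{-2}+1$ must yield $\|f_3\|_2\leq\e$; this $r$ gives the desired decomposition. I do not foresee a serious obstacle: every step is a direct translation of the argument in Proposition~\ref{easystrong} under the dictionary ``Fourier coefficient $\leftrightarrow$ eigenvalue, character $\leftrightarrow$ rank-one symmetric tensor $u\otimes u$'', and the two spectral identities ensure that the same arithmetic goes through verbatim.
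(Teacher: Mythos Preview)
Your proposal is correct and follows essentially the same approach as the paper: spectral decomposition in place of the Fourier expansion, the same choice of thresholds $m_{r+1}\geq\eta(m_r)^{-4}$, the same three-part split, the identical estimate $\|f_2\|_{GU^2}^4\leq m_{r+1}^{-1}\sum_i\lambda_i^2\leq\eta(m_r)^4$, and the same pigeonhole argument to find a good $r$. The only cosmetic difference is that the paper writes the bound as $r\leq\e^{-2}$ rather than $r\leq\e^{-2}+1$.
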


\begin{proof}
Let $N=|X|$ and let us enumerate an orthonormal basis $(u_i)_1^N$ 
of eigenvectors of $f$ in such a way that the absolute values of
the eigenvalues $\lambda_i$ are in non-increasing order.
Choose an increasing sequence of positive integers $m_1,m_2,\dots$
in such a way that $m_{r+1}\geq\eta(m_r)^{-4}$ for every $r$.

Now let us choose $i$ and attempt to prove the result using the
decomposition $f_1=\sum_{i\leq m_r}\lambda_iu_i\otimes u_i$,
$f_2=\sum_{i>m_{r+1}}\lambda_iu_i\otimes u_i$, and
$f_3=\sum_{m_r<i\leq m_{r+1}}\lambda_iu_i\otimes u_i$. Then $f_1$
is a linear combination of at most $m_r$ eigenvectors, which are
orthonormal to each other by the spectral theorem. By the remark
above about the sum of the squares of the eigenvalues, 
there can be at most $m_{r+1}$ eigenvectors $u_i$ with
$|\lambda_i|\geq m_{r+1}^{-1/2}$. Therefore,  
\begin{equation*}
\|f_2\|_{U^2}^4=\sum_{i>m_{r+1}}\lambda_i^4\leq m_{r+1}^{-1}\sum_i\lambda_i^2
=m_{r+1}^{-1}\|f\|_2^2\leq\eta(m_r)^4.
\end{equation*}
Therefore, we are done if $\|f_3\|_2\leq\e$. But the possible
functions $f_3$ (as $r$ varies) are disjoint parts of the spectral
expansion of $f$, so at most $\e^{-2}$ of them can have norm
greater than $\e$. Therefore, we can find $r\leq\e^{-2}$ such
that the proposed decomposition works. 
\end{proof}

The above result is closely related to a ``weak regularity lemma''
due to Frieze and Kannan \cite{friezekannan}.

\section{The Hahn-Banach theorem and simple applications.}

Let us begin by stating the version of the Hahn-Banach theorem that we
shall need. 

\begin{theorem}\label{hbbasic}
Let $K$ be a convex body in $\R^n$ and let $f$ be an element of
$\R^n$ that is not contained in $K$. Then there is a constant
$\b$ and a non-zero linear functional $\phi$ such that $\sp{f,\phi}\geq\b$
and $\sp{g,\phi}\leq\b$ for every $g\in K$. 
\end{theorem}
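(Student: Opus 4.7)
The plan is to apply the standard finite-dimensional nearest-point argument. Since $K$ is a convex body (hence compact) in $\R^n$ and $f\notin K$, the continuous map $g\mapsto\|f-g\|_2$ attains a strictly positive minimum on $K$ at some point $g_0$. I would then set $\phi:=f-g_0$ and $\b:=\sp{g_0,\phi}$. The vector $\phi$ is non-zero precisely because $f\notin K$, so this gives a candidate non-zero linear functional and a candidate threshold.

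The two required inequalities then follow from short computations. The inequality $\sp{f,\phi}\geq\b$ is in fact strict, because
\begin{equation*}
\sp{f,\phi}-\b=\sp{f-g_0,\phi}=\|\phi\|_2^2>0.
\end{equation*}
The inequality $\sp{g,\phi}\leq\b$ for $g\in K$ comes from convexity: the segment $g_0+t(g-g_0)$ lies in $K$ for $t\in[0,1]$, so the squared-distance function $t\mapsto\|f-g_0-t(g-g_0)\|_2^2$ is minimized at $t=0$, and its right derivative there must be non-negative. Computing this derivative gives exactly $-2\sp{\phi,g-g_0}\geq 0$, which rearranges to $\sp{g,\phi}\leq\sp{g_0,\phi}=\b$.

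The only real subtlety is guaranteeing the existence of a nearest point, which is where the finite-dimensional hypothesis (via compactness of $K$) enters; in infinite dimensions one would have to extend a linear functional via Zorn's lemma instead. In $\R^n$, however, the nearest-point route is by far the cleanest and also exposes the geometric content of the theorem, namely that the separating hyperplane is simply the one passing through $g_0$ perpendicular to the direction $f-g_0$ from $K$ to $f$.
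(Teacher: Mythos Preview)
Your proof is correct: the nearest-point construction is the standard and cleanest route to the finite-dimensional separating hyperplane theorem, and your verification of both inequalities is accurate. Note, however, that the paper does not actually prove this theorem; it simply states it as the needed version of Hahn--Banach and proceeds directly to the corollaries, so there is no paper proof to compare against. Your argument would serve perfectly well as the omitted justification.
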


Now let us prove two corollaries, both of which are useful for proving
decomposition theorems.

\begin{corollary}\label{hb1}
Let $\seq K r$ be closed convex subsets of $\R^n$, each containing 0, let
$\seq c r$ be positive real numbers and suppose that $f$ is an element
of $\R^n$ that cannot be written as a sum $f_1+\dots+f_r$ with $f_i\in
c_iK_i$. Then there is a linear functional $\phi$ such that
$\sp{f,\phi}>1$ and $\sp{g,\phi}\leq c_i^{-1}$ for every $i\leq r$
and every $g\in K_i$.
\end{corollary}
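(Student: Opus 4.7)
The plan is to deduce the corollary from Theorem \ref{hbbasic} by combining the $r$ sets into a single convex set and then separating $f$ from it. Concretely, I would define the Minkowski sum
\begin{equation*}
K = c_1 K_1 + c_2 K_2 + \dots + c_r K_r = \{g_1 + \dots + g_r : g_i \in c_i K_i\}.
\end{equation*}
This set is convex (a sum of convex sets is convex), and it contains $0$ because each $K_i$ does. By the hypothesis that $f$ cannot be written as a sum of the required form, $f \notin K$. So I would now be in a position to apply (a strict-separation version of) Theorem \ref{hbbasic} to $K$ and $f$, obtaining a non-zero linear functional $\phi$ and a constant $\b$ with $\sp{f,\phi} > \b$ and $\sp{h,\phi} \leq \b$ for every $h \in K$.

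The next step is to translate the single bound $\sp{h,\phi}\leq\b$ on $K$ into the individual bounds claimed in the conclusion. The key observation is that for any $i$ and any $g \in K_i$, the vector $c_i g$ lies in $K$: take it from the $i$th summand and $0$ from each of the other $K_j$'s (which we can, precisely because each $K_j$ contains $0$). Hence $c_i \sp{g,\phi} = \sp{c_i g, \phi} \leq \b$, so $\sp{g,\phi} \leq \b/c_i$ for every $g \in K_i$. In particular, since $0 \in K$, we get $\b \geq 0$.

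To finish, I would rescale $\phi$. If $\b > 0$, replace $\phi$ by $\phi/\b$: this immediately yields $\sp{f,\phi} > 1$ and $\sp{g,\phi} \leq c_i^{-1}$ for each $i$ and each $g \in K_i$. If $\b = 0$ then $\sp{g,\phi} \leq 0 \leq c_i^{-1}$ already, and $\sp{f,\phi} > 0$, so scaling $\phi$ by a sufficiently large positive constant achieves $\sp{f,\phi} > 1$ without disturbing the upper bounds.

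The main technical wrinkle is the strict separation $\sp{f,\phi} > \b$ rather than the weak $\geq\b$ produced by Theorem \ref{hbbasic} as stated. In finite dimensions this is standard provided $K$ is closed, so the only real issue is that a Minkowski sum of closed convex sets need not be closed; if one is willing to assume (as is essentially always the case in our applications) that the $K_i$ are compact, or if one first passes to $\overline{K}$ and observes that $f$ still lies outside it, this problem evaporates. Beyond that point the argument is just unwrapping the definition of Minkowski sum and choosing a normalization of $\phi$, neither of which presents real difficulty.
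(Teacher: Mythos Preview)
Your approach is essentially the same as the paper's: form the Minkowski sum $K=\sum_i c_iK_i$, separate $f$ from $K$ via Theorem~\ref{hbbasic}, extract the individual bounds $\sp{g,\phi}\leq c_i^{-1}$ using $0\in K_j$ for $j\ne i$, and rescale $\phi$. The only cosmetic differences are that the paper obtains strictness by first passing to $(1+\e)^{-1}f$ and argues $\b>0$ via a small-ball trick rather than handling $\b=0$ separately; if anything you are more explicit than the paper about the potential failure of a Minkowski sum of closed sets to be closed.
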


\begin{proof}
Let $K$ be the convex body $\sum_ic_iK_i$. Our hypothesis is that
$f\notin K$. Since $K$ is closed, it follows that there exists $\e>0$
such that $(1+\e)^{-1}f\notin K$. Therefore, by Theorem \ref{hbbasic},
there is a constant $\b$ and a linear functional $\phi$ such that
$(1+\e)^{-1}\sp{f,\phi}\geq \b$ and $\sp{g,\phi}\leq \b$ for every
$g\in K$. Again using the fact that $K$ is closed, we can add a small
Euclidean ball $B$ to $K$ in such a way that $(1+\e)^{-1}f\notin
B+K$. Since $0\in K$, it follows that $\b>0$. Therefore, we can divide
$\phi$ by $\b$ and get $\b$ to be 1, with the result that
$\sp{f,\phi}\geq(1+\e)\b$. Since $0$ belongs to each $K_i$, we can
also conclude that $\sp{g,\phi}\leq 1$ for every $g\in c_iK_i$, which
completes the proof.
\end{proof}

\begin{corollary}\label{hb2}
Let $\seq K r$ be closed convex subsets of $\R^n$, each containing 0
and suppose that $f$ is an element of $\R^n$ that cannot be written as
a convex combination $c_1f_1+\dots+c_rf_r$ with $f_i\in K_i$. Then
there is a linear functional $\phi$ such that $\sp{f,\phi}>1$ and
$\sp{g,\phi}\leq 1$ for every $i\leq r$ and every $g\in K_i$.
\end{corollary}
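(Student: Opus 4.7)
The plan is to follow exactly the template of the proof of Corollary \ref{hb1}, but with the Minkowski sum $\sum_i c_iK_i$ replaced by the convex hull of the union. Specifically, I would set $L=\mathrm{conv}(K_1\cup\dots\cup K_r)$. Because each $K_i$ is itself convex, a standard computation shows that $L$ coincides exactly with the set of convex combinations $\{c_1f_1+\dots+c_rf_r:c_i\geq0,\ \sum_ic_i=1,\ f_i\in K_i\}$; so $L$ is a convex set containing $0$, and by hypothesis it does not contain $f$. This reduces the problem to finding a linear functional $\phi$ with $\sp{f,\phi}>1$ that separates $f$ from $L$, since the desired bound $\sp{g,\phi}\leq 1$ on each $K_i$ will then follow immediately from $K_i\subseteq L$.

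The next step mirrors the closedness trick in Corollary \ref{hb1}. Assuming $L$ is closed, since $f\notin L$ I can choose $\e>0$ and a small Euclidean ball $B$ centred at $0$ so that $(1+\e)^{-1}f\notin L+B$. Applying Theorem \ref{hbbasic} then produces a nonzero $\phi$ and a constant $\b$ with $(1+\e)^{-1}\sp{f,\phi}\geq\b$ and $\sp{g,\phi}\leq\b$ for every $g\in L+B$. Since $0\in L$ we have $B\subseteq L+B$, which forces $\b\geq\sup_{g\in B}\sp{g,\phi}>0$. Rescaling $\phi$ by $\b^{-1}$ then yields $\sp{f,\phi}\geq 1+\e>1$, while $\sp{g,\phi}\leq 1$ continues to hold for every $g\in L$ and hence for every $g\in K_i$ and every $i$, which is exactly the required conclusion.

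The only step that really requires care is the closedness of $L$, which is not automatic in finite dimensions when the $K_i$ are unbounded. I expect this to be the main (though mild) obstacle: in the settings relevant to this paper the $K_i$ will typically be balls of (semi-)norms, and either are bounded outright or can be truncated to a large bounded region without changing whether $f$ lies in the relevant convex hull. The convex hull of finitely many compact convex sets is then compact by a standard consequence of Carath\'eodory's theorem, and so \emph{a fortiori} closed, at which point the argument above goes through verbatim.
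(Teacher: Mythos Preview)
Your proposal is correct and essentially identical to the paper's proof: the paper also sets $K$ equal to the set of convex combinations, asserts it is closed and convex, picks $\e>0$ with $(1+\e)^{-1}f\notin K$, applies Theorem \ref{hbbasic}, and rescales after observing $\b>0$. If anything you are more careful than the paper, which simply asserts closedness of $K$ without comment; your final paragraph correctly identifies this as the one genuine subtlety (the convex hull of finitely many closed convex sets containing $0$ need not be closed when they are unbounded) and gives the right fix for the applications at hand.
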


\begin{proof}
Let $K$ be the set of all convex combinations $c_1f_1+\dots+c_rf_r$ 
with $f_i\in K_i$. Then $K$ is a closed convex set and $f$ is not
contained in $K$. Therefore, there exists $\e>0$ such that
$(1+\e)^{-1}f\notin K$. By Theorem \ref{hbbasic} there is a 
functional $\phi$ and a constant $\b$ such that 
$(1+\e)^{-1}\sp{f,\phi}\geq\b$ and $\sp{g,\phi}\leq\b$ whenever
$g$ belongs to $K$. In particular, $\sp{g,\phi}\leq\b$ whenever
$g$ belongs to one of the sets $K_i$. As in the proof of the
previous corollary, $\b$ must be positive and can therefore
be assumed to be 1. The result follows.
\end{proof}

Recall that if $\|.\|$ is a norm on $\R^n$, then the dual
norm $\|.\|^*$ is defined by the formula
$\|\phi\|^*=\max\{\sp{f,\phi}:\|f\|\leq 1\}$. If $f\in\R^n$ then
Theorem \ref{hbbasic} implies that there exists a functional $\phi$
such that $\|\phi\|^*\leq 1$ and $\sp{f,\phi}=\|f\|$.  Such a
functional is called a \textit{support functional} for $f$. In this
paper it will be convenient to call $\phi$ a support functional if
$\phi\ne 0$ and $\sp{f,\phi}=\|f\|\|\phi\|^*$, so that a positive
scalar multiple of a support functional is also a support functional.

The following lemma is useful in proofs that involve the Hahn-Banach
theorem, as we shall see in section \ref{decompositions}. It tells us
that the dual of an $\ell_1$-like combination of norms is an
$\ell_\infty$-like combination of their duals. We shall adopt the
convention that if $\|.\|$ is a norm defined on a subspace $V$ of
$\R^n$ then its dual $\|.\|^*$ is the seminorm defined by the
formula $\|f\|^*=\max\{\sp{f,g}:g\in V, \|g\|\leq 1\}$.

\begin{lemma}\label{maxdual}
Let $\Sigma$ be a set and for each $\sigma\in\Sigma$ let
$\|.\|_\sigma$ be a norm defined on a subspace $V_\sigma$ of
$\R^n$. Suppose that $\sum_{\sigma\in\Sigma}V_\sigma=\R^n$, and define
a norm $\|.\|$ on $\R^n$ by the formula
\begin{equation*}
\|x\|=\inf\{\|x_1\|_{\sigma_1}+\dots+\|x_k\|_{\sigma_k}:x_1+\dots+x_k=x,
\sigma_1,\dots,\sigma_k\in\Sigma\}
\end{equation*}
Then this formula does indeed define a norm, and its
dual norm $\|.\|^*$ is given by the formula
\begin{equation*}
\|z\|^*=\max\{\|z\|_{\sigma}^*:\sigma\in\Sigma\}
\end{equation*}
\end{lemma}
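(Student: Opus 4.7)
The plan is straightforward: first check that the formula defines a norm, and then verify the dual formula by proving two inequalities. Homogeneity of $\|\cdot\|$ is immediate from the defining infimum, and the triangle inequality follows by concatenating a near-optimal decomposition of $x$ with one of $y$ to obtain a decomposition of $x+y$. The hypothesis that $\sum_\sigma V_\sigma = \R^n$ guarantees that at least one decomposition exists for every $x$, so the infimum is taken over a non-empty set and $\|x\|$ is finite. For positive definiteness, given $x \neq 0$ I would pick any $z$ with $\sp{x,z} \neq 0$ and set $M = \max_\sigma \|z\|_\sigma^*$; then for every decomposition $x = x_1 + \dots + x_k$ with $x_i \in V_{\sigma_i}$,
\[
|\sp{x,z}| \leq \sum_i |\sp{x_i,z}| \leq \sum_i \|x_i\|_{\sigma_i}\|z\|_{\sigma_i}^* \leq M\sum_i \|x_i\|_{\sigma_i},
\]
so $\|x\| \geq |\sp{x,z}|/M > 0$.

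For the dual formula, one direction is essentially tautological: for each $\sigma \in \Sigma$ and each $x \in V_\sigma$, the trivial one-term decomposition $x = x$ shows that $\|x\| \leq \|x\|_\sigma$, which gives $\|z\|_\sigma^* \leq \|z\|^*$ for every $z$; taking the max over $\sigma$ yields $\max_\sigma \|z\|_\sigma^* \leq \|z\|^*$. For the reverse inequality, pick $x$ with $\|x\| \leq 1$ and fix $\e > 0$; choose a decomposition $x = x_1 + \dots + x_k$ with $x_i \in V_{\sigma_i}$ and $\sum_i \|x_i\|_{\sigma_i} \leq 1 + \e$. Then, applying the pairing inequality $\sp{x_i,z} \leq \|x_i\|_{\sigma_i}\|z\|_{\sigma_i}^*$ termwise,
\[
\sp{x,z} = \sum_i \sp{x_i,z} \leq \sum_i \|x_i\|_{\sigma_i}\|z\|_{\sigma_i}^* \leq (1+\e)\max_\sigma \|z\|_\sigma^*.
\]
Letting $\e \to 0$ and taking the supremum over $x$ with $\|x\| \leq 1$ gives $\|z\|^* \leq \max_\sigma \|z\|_\sigma^*$.

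The only real subtlety is in positive definiteness of $\|\cdot\|$: the argument above requires $M = \max_\sigma \|z\|_\sigma^*$ to be finite for the chosen separating $z$. When $\Sigma$ is finite this is automatic, and in the general case it is implicit in the statement of the lemma (equivalently, the convex hull of $\bigcup_\sigma \{x \in V_\sigma : \|x\|_\sigma \leq 1\}$ is required to be bounded in $\R^n$). Once this point is granted, the rest of the proof is nothing more than the standard duality pairing applied term by term to a near-optimal decomposition, so I do not expect any serious obstacle.
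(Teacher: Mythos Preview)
Your proposal is correct and follows essentially the same approach as the paper. The paper leaves the norm axioms as an exercise and proves the dual formula via the same two inequalities; the only cosmetic difference is that for the bound $\|z\|^*\leq\max_\sigma\|z\|_\sigma^*$ the paper argues by contradiction (from $\sum_i|\sp{x_i,z}|>\sum_i\|x_i\|_{\sigma_i}$ it extracts a single $i$ with $\|z\|_{\sigma_i}^*>1$), whereas you bound the whole sum by $(1+\e)\max_\sigma\|z\|_\sigma^*$ directly---your version is marginally cleaner, and your remark about the finiteness of the max when $\Sigma$ is infinite is a fair point that the paper glosses over.
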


\begin{proof}
It is a simple exercise to check that the expression does
indeed define a norm. 

Let us begin by supposing that $\|z\|_{\sigma}^*\geq 1$ for some
$\sigma\in\Sigma$.  Then there exists $x\in V_\sigma$ such that
$\|x\|_{\sigma}\leq 1$ and $|\langle x,z\rangle|\geq 1$. But then
$\|x\|\leq 1$ as well, from which it follows that $\|z\|^*\geq
1$. Therefore, $\|z\|^*$ is at least the maximum of the
$\|z\|_{\sigma}^*$.

Now let us suppose that $\|z\|^*>1$.
This means that there exists $x$ such that $\|x\|\leq 1$
and $|\langle x,z\rangle|\geq 1+\epsilon$ for some
$\epsilon>0$. Let us choose $x_1,\dots,x_k$ such that
$x_i\in V_{\sigma_i}$ for each $i$, $x_1+\dots+x_k=x$, and 
$\|x_1\|_{\sigma_1}+\dots+\|x_k\|_{\sigma_k}<1+\epsilon$.
Then 
\begin{equation*}
\sum_i|\langle x_i,z\rangle|>\|x_1\|_{\sigma_1}+\dots+\|x_k\|_{\sigma_k}
\end{equation*}
so there must exist $i$ such that 
$|\langle x_i,z\rangle|>\|x_i\|_{\sigma_i}$, from which it follows
that $\|z\|_i^*>1$. This proves that $\|z\|^*$ is at most
the maximum of the $\|z\|_i^*$. 
\end{proof}

A particular case that will interest us is when $\Sigma$ is a
subset of $\R^n$, for each $\sigma\in\Sigma$ the subspace $V_\sigma$
is just the subspace generated by $\sigma$, and the norm on 
$V_\sigma$ is $\|\lambda\sigma\|_\sigma=|\lambda|$. The dual
seminorm is then $\|f\|_\sigma^*=|\sp{f,\sigma}|$. Thus, if we
specialize Lemma \ref{maxdual} to this case then we obtain the
following corollary. 

\begin{corollary}\label{convexhulldual}
Let $\Sigma\subset\R^n$ be a set that spans $\R^n$ and define a 
norm $\|.\|$ on $\R^n$ by the formula
\begin{equation*}
\|f\|=\inf\bigl\{\sum_{i=1}^k|\lambda_i|:f=\sum_{i=1}^k\lambda_i\sigma_i,
\ \sigma_1,\dots,\sigma_k\in\Sigma\}.
\end{equation*}
Then this formula does indeed define a norm, and its dual norm
$\|.\|^*$ is defined by the formula 
$\|f\|^*=max\{|\sp{f,\sigma}|:\sigma\in\Sigma\}$.
\end{corollary}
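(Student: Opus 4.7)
The plan is to deduce Corollary \ref{convexhulldual} directly from Lemma \ref{maxdual} by specializing to the family of one-dimensional subspaces spanned by the elements of $\Sigma$, exactly as the paragraph immediately preceding the corollary suggests. For each non-zero $\sigma \in \Sigma$ (zero elements of $\Sigma$ can be discarded, as they contribute nothing to either infimum), I would take $V_\sigma = \{\lambda\sigma : \lambda \in \R\}$ and put on it the norm $\|\lambda\sigma\|_\sigma = |\lambda|$. Since $\Sigma$ spans $\R^n$ by hypothesis, $\sum_{\sigma\in\Sigma} V_\sigma = \R^n$, so the hypothesis of Lemma \ref{maxdual} is satisfied.

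Next I would check that this setup reproduces the norm $\|.\|$ in the statement. Any decomposition $f = x_1 + \dots + x_k$ with $x_i \in V_{\sigma_i}$ is precisely a representation $f = \lambda_1\sigma_1 + \dots + \lambda_k\sigma_k$ via $x_i = \lambda_i\sigma_i$, and in that case $\|x_i\|_{\sigma_i} = |\lambda_i|$. So the infimum appearing in Lemma \ref{maxdual} equals the infimum in the corollary, and Lemma \ref{maxdual} immediately certifies that the expression does define a norm.

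Finally, I would compute the dual seminorm attached to each $\sigma$. By the definition of dual norm on the subspace $V_\sigma$,
\[
\|f\|_\sigma^* = \max\{\sp{f,\lambda\sigma} : \lambda \in \R,\ |\lambda| \leq 1\} = |\sp{f,\sigma}|,
\]
where the absolute value is forced by the symmetry of $V_\sigma$ under $\lambda \mapsto -\lambda$. Plugging this into the conclusion of Lemma \ref{maxdual} yields $\|f\|^* = \max\{|\sp{f,\sigma}| : \sigma \in \Sigma\}$, which is exactly what the corollary asserts.

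There is no real obstacle to overcome; the corollary is a genuine specialization of the lemma. The only points that require a moment's care are the handling of any zero elements in $\Sigma$ and the observation that the $\lambda \mapsto -\lambda$ symmetry of the one-dimensional subspaces is what converts the supremum $\sup_{|\lambda|\le 1}\sp{f,\lambda\sigma}$ into an absolute value $|\sp{f,\sigma}|$ in the final formula.
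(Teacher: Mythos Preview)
Your proposal is correct and follows exactly the approach the paper takes: the corollary is obtained from Lemma~\ref{maxdual} by specializing to the one-dimensional subspaces $V_\sigma$ with norm $\|\lambda\sigma\|_\sigma = |\lambda|$, yielding dual seminorm $\|f\|_\sigma^* = |\sp{f,\sigma}|$. The paper's derivation is essentially the paragraph preceding the corollary, and your write-up fills in precisely those details.
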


\subsection{A simple structure theorem.}

We now prove a very simple (and known) decomposition result that 
illustrates our basic method.

\begin{proposition}\label{dualdecomposition}
Let $\|.\|$ be any norm on $\R^n$ and let $f$ be any function in
$\R^n$. Then $f$ can be written as $g+h$ in such a way that
$\|g\|+\|h\|^*\leq\|f\|_2$.
\end{proposition}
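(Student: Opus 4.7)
The plan is to apply the Hahn--Banach separation theorem to the closed convex set
\begin{equation*}
K=\{g+h:\|g\|+\|h\|^*\leq\|f\|_2\},
\end{equation*}
which contains the origin and is closed (the two unit balls are compact in finite dimensions, so the set of such sums is a continuous image of a compact set). If $f\in K$ we are done, so I will suppose $f\notin K$ and aim for a contradiction.

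Since $K$ is closed, there exists $\epsilon>0$ with $(1+\epsilon)^{-1}f\notin K$; applying Theorem~\ref{hbbasic} and rescaling $\phi$ exactly as in the proof of Corollary~\ref{hb1}, I obtain a non-zero functional $\phi$ with $\sp{f,\phi}>1$ and $\sp{k,\phi}\leq 1$ for every $k\in K$. Now I feed into this inequality two families of extreme elements of $K$. First, taking $g=\|f\|_2 u$ with $\|u\|\leq 1$ and $h=0$ gives $\|f\|_2\|\phi\|^*\leq 1$; symmetrically, taking $g=0$ and $h=\|f\|_2 v$ with $\|v\|^*\leq 1$ gives $\|f\|_2\|\phi\|^{**}\leq 1$, and by finite-dimensional biduality $\|\phi\|^{**}=\|\phi\|$, so $\|f\|_2\|\phi\|\leq 1$ as well.

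The key observation is that these two bounds combine multiplicatively via the elementary duality inequality $\sp{\phi,\phi}\leq\|\phi\|\|\phi\|^*$ (which is just the definition of the dual norm applied to $\phi$ itself). This gives
\begin{equation*}
\|\phi\|_2^2=\sp{\phi,\phi}\leq\|\phi\|\|\phi\|^*\leq\|f\|_2^{-2},
\end{equation*}
so $\|\phi\|_2\leq\|f\|_2^{-1}$. Cauchy--Schwarz then yields $\sp{f,\phi}\leq\|f\|_2\|\phi\|_2\leq 1$, contradicting $\sp{f,\phi}>1$.

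There is no genuinely hard step: the only thing to notice is that separating $f$ from $K$ produces simultaneous bounds on $\|\phi\|$ and $\|\phi\|^*$, and that the duality inequality $\sp{\phi,\phi}\leq\|\phi\|\|\phi\|^*$ lets these bounds interact to control $\|\phi\|_2$, after which Cauchy--Schwarz closes the loop. (One could alternatively formalise this as computing the dual of the infimum norm $N(f)=\inf\{\|g\|+\|h\|^*:f=g+h\}$ using Lemma~\ref{maxdual}, getting $N^*(\phi)=\max(\|\phi\|,\|\phi\|^*)$, and then checking by the same short calculation that $N^*(\phi)\leq 1$ forces $\|\phi\|_2\leq 1$.)
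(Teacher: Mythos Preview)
Your proof is correct and essentially identical to the paper's: both separate $f$ from the relevant convex set, obtain simultaneous bounds on $\|\phi\|$ and $\|\phi\|^*$, combine them via $\|\phi\|_2^2=\sp{\phi,\phi}\leq\|\phi\|\|\phi\|^*$, and close with Cauchy--Schwarz. The only cosmetic difference is that the paper normalizes to $f/\|f\|_2$ and cites Corollary~\ref{hb2} directly, whereas you keep $f$ unnormalized and redo the separation argument inline.
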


\begin{proof}
Suppose that the result is false. We shall apply Corollary \ref{hb2}
to the function $f/\|f\|_2$, with $K_1$ and $K_2$ taken to be the unit
balls of $\|.\|$ and $\|.\|^*$. Our hypothesis is equivalent to the
assertion that $f/\|f\|_2$ is not a convex combination $c_1g_1+c_2g_2$
with $g_i\in K_i$ for $i=1,2$. Therefore, we obtain a functional
$\phi$ such that $\sp{f,\phi}>\|f\|_2$ and $\|\phi\|^*$ and $\|\phi\|$
are both at most 1. But the first property implies, by the
Cauchy-Schwarz inequality, that $\|\phi\|_2>1$, while the second
implies that $\|\phi\|_2^2=\sp{\phi,\phi} \leq\|\phi\|\|\phi\|^*\leq
1$. This is a contradiction.
\end{proof}

A simple modification of Proposition \ref{dualdecomposition} makes it
a little more flexible. Suppose that we wish to write $f$ as $g+h$ 
with $\|g\|$ small and $\|h\|^*$ not
too large. If we define a new norm $|.|$ to be $\e^{-1}\|.\|$,
then $|.|^*=\e\|.\|^*$. Applying Proposition \ref{dualdecomposition}
to these rescaled norms, we find that we can write $f$ as $g+h$ in
such a way that $\e^{-1}\|g\|+\e\|h\|^*\leq\|f\|_2$. In particular, if
$\|f\|_2=1$, then $\|g\|\leq\e$ and $\|h\|^*\leq\e^{-1}$.

The reason such a result might be expected to be useful in additive
combinatorics is that, as demonstrated in the previous section, we 
have a good supply of norms $\|.\|$ that measure quasirandomness.
Moreover, their duals, as we shall see later, can be thought of as
a sort of measure of structure. Perhaps the simplest example that
illustrates this is if we look at functions $f$ defined on
finite Abelian groups, and take $\|f\|$ to be $\|\hf\|_\infty$. 
If $\|f\|_2\leq 1$, then 
\begin{equation*}
\|f\|_{U^2}^2=\|\hf\|_4^2\leq\|\hf\|_2\|\hf\|_\infty\leq\|\hf\|_\infty,
\end{equation*}
a calculation we have already done. This shows that for functions with
bounded $L_2$-norm there is a rough equivalence between $\|f\|_{U^2}$
and $\|\hf\|_\infty$, in the sense that if one is small then so is the
other.

Thus, if $\|f\|_2\leq 1$ then $\|f\|=\|\hf\|_\infty$ being small tells
us that $f$ is quasirandom. The dual norm, $\|f\|^*=\|\hf\|_1$, is
a sort of measure of structure, since if $\|\hf\|_1$ is at most $C$, then
$f$ is a small multiple of a convex combination of trigonometric
functions, which can be approximated in $L_2$ by a linear
combination of a bounded number of such functions. Thus, we recover
a result that resembles Proposition \ref{easystrong}. It is weaker,
however, because we have not yet related the quasirandomness constant
to the structure constant by means of an arbitrary function.
However, this is easily done, again with the help of an $L_2$
error term, as the next result shows.

\begin{proposition}\label{strongdecomposition}
Let $f$ be a function in $\R^n$ with $\|f\|_2\leq 1$ and let
$\|.\|$ be any norm on $\R^n$. Let $\e>0$ and let 
$\eta:\R_+\rightarrow\R_+$ be any decreasing positive function.
Let $r=\lceil 2\e^{-1}\rceil$ and define a sequence $C_1,\dots,C_r$
by setting $C_1=1$ and $C_i=2\eta(C_{i-1})^{-1}$ when $i>1$.
Then there exists $i\leq r$ such that $f$ can be decomposed as
$f_1+f_2+f_3$ with 
$$C_i^{-1}\|f_1\|^*+\eta(C_i)^{-1}\|f_2\|+\e^{-1}\|f_3\|_2\leq 1.$$ 
In particular, $\|f_1\|^*\leq C_i$, $\|f_2\|\leq\eta(C_i)$ and 
$\|f_3\|_2\leq\e$.
\end{proposition}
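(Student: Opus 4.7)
The proof plan is to recognise the target inequality as the statement $\|f\|_{(i)} \le 1$, where $\|\cdot\|_{(i)}$ is the inf-convolution norm of the three rescaled norms $C_i^{-1}\|\cdot\|^*$, $\eta(C_i)^{-1}\|\cdot\|$ and $\e^{-1}\|\cdot\|_2$ considered in Lemma \ref{maxdual}. That lemma gives the dual as $\|\phi\|_{(i)}^* = \max\{C_i\|\phi\|,\, \eta(C_i)\|\phi\|^*,\, \e\|\phi\|_2\}$, so $\|f\|_{(i)}>1$ is equivalent, via Hahn--Banach (or directly via Corollary \ref{hb1} applied to the three unit balls with scalings $C_i,\eta(C_i),\e$), to the existence of a functional $\phi_i$ with $\sp{f,\phi_i}>1$ together with the three norm bounds $\|\phi_i\|\le C_i^{-1}$, $\|\phi_i\|^*\le\eta(C_i)^{-1}$ and $\|\phi_i\|_2\le\e^{-1}$ all holding simultaneously.

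Now assume for contradiction that no $i\le r$ yields the desired decomposition, producing functionals $\phi_1,\ldots,\phi_r$ of this form. Cauchy--Schwarz applied to $\sp{f,\phi_i}>1$ with $\|f\|_2\le 1$ forces $\|\phi_i\|_2>1$, so $\|\phi_i\|_2\in(1,\e^{-1}]$. The recursion $C_{i+1}=2\eta(C_i)^{-1}$ is tailored precisely so that for $i<j$,
$$|\sp{\phi_i,\phi_j}| \le \|\phi_i\|^*\|\phi_j\| \le \eta(C_i)^{-1}\cdot C_j^{-1} \le \tfrac{1}{2},$$
using $C_j\ge C_{i+1}=2\eta(C_i)^{-1}$. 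So $\phi_1,\ldots,\phi_r$ form a nearly-orthogonal family in $L^2$ with bounded norms, each of which correlates with $f$ with inner product strictly greater than $1$.

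To finish, I would run an energy-increment argument: let $g_i$ be the $L^2$-orthogonal projection of $f$ onto $\mathrm{span}(\phi_1,\ldots,\phi_i)$, so that $\|g_i\|_2^2 \le \|f\|_2^2 \le 1$ by the Pythagorean theorem. Using the correlation lower bound $\sp{f,\phi_i}>1$, the upper bound $\|\phi_i\|_2 \le \e^{-1}$, and the cross-correlation bound $1/2$, one shows that each step adds a fixed amount depending on $\e$ to $\|g_i\|_2^2$; once the cumulative increase exceeds $1$ we get the contradiction, bounding the total number of steps by the quantity $r$ asserted in the statement. The hard part will be the bookkeeping of the increment $\|g_i\|_2^2 - \|g_{i-1}\|_2^2 = \sp{f,\phi_i-P_{i-1}\phi_i}^2/\|\phi_i-P_{i-1}\phi_i\|_2^2$, where $P_{i-1}$ is projection onto the span of the earlier $\phi_j$: those earlier terms introduce ``interference'' through $P_{i-1}\phi_i$, and one must use the $1/2$ bound on cross-correlations uniformly across all $r$ iterations to keep this quantity bounded below and rule out its collapse.
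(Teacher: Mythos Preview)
Your setup matches the paper exactly: failure of the decomposition for each $i$ yields, via Hahn--Banach (the paper invokes Corollary~\ref{hb2}), functionals $\phi_i$ with $\sp{f,\phi_i}>1$, $\|\phi_i\|\le C_i^{-1}$, $\|\phi_i\|^*\le\eta(C_i)^{-1}$, $\|\phi_i\|_2\le\e^{-1}$; and the recursion on the $C_i$ is designed precisely so that $|\sp{\phi_i,\phi_j}|\le\|\phi_i\|^*\,\|\phi_j\|\le\eta(C_i)^{-1}C_j^{-1}\le\tfrac12$ for $i<j$.

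Where you diverge is the endgame. You propose a step-by-step energy increment via the projections $g_i=P_if$, and you rightly flag the difficulty: the increment $\sp{f,\phi_i-P_{i-1}\phi_i}^2/\|\phi_i-P_{i-1}\phi_i\|_2^2$ depends on $P_{i-1}\phi_i$, and a cross-correlation bound of $\tfrac12$ between vectors whose $L_2$-norms may be as small as $1$ is not strong enough to make the interference negligible over many steps. This bookkeeping can be pushed through, but it is genuinely awkward, and you have not carried it out.

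The paper sidesteps the whole issue with a one-line device: set $\Phi=\phi_1+\cdots+\phi_r$. On one hand, Cauchy--Schwarz with $\|f\|_2\le 1$ gives
\[
\|\Phi\|_2\ \ge\ \sp{f,\Phi}\ >\ r.
\]
On the other hand, expanding the square gives
\[
\|\Phi\|_2^2=\sum_i\|\phi_i\|_2^2+2\sum_{i<j}\sp{\phi_i,\phi_j}\ \le\ r\e^{-2}+\frac{r(r-1)}{2},
\]
and these two estimates are incompatible once $r$ is large enough in terms of $\e$. All of the ``interference'' you worry about is absorbed into the single off-diagonal sum $2\sum_{i<j}\sp{\phi_i,\phi_j}$, which is trivially at most $r(r-1)/2$ regardless of how the individual terms interact. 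So rather than tracking projections step by step, the paper aggregates everything into $\Phi$ and compares two global estimates on its $L_2$-norm. Your energy-increment route, if optimized, would in the end amount to testing $P_rf$ against this same vector~$\Phi$ anyway.
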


\begin{proof}
If there is no such decomposition for $i$, then by Corollary \ref{hb2}
there is a functional $\phi_i$ such that $\|\phi_i\|\leq C_i^{-1}$,
$\|\phi_i\|^*\leq\eta(C_i)^{-1}$, $\|\phi_i\|_2\leq\e^{-1}$, and
$\sp{\phi_i,f}>1$. If this is true for every $i\leq r$ then 
$$\|\phi_1+\dots+\phi_r\|_2\geq\sp{\phi_1+\dots+\phi_r,f}\geq r,$$
where the first inequality follows from Cauchy-Schwarz and the
assumption that $\|f\|_2\leq 1$.

On the other hand, if $i<j$ then 
$$\sp{\phi_i,\phi_j}\leq\|\phi_i\|\|\phi_j\|^*\leq \eta(C_i)^{-1}C_j^{-1}
\leq 1/2,$$
the last inequality following from the way we constructed the sequence 
$C_1,\dots,C_r$. Therefore, 
$$\|\phi_1+\dots+\phi_r\|_2^2\leq\e^{-1}r+r(r-1)/2.$$
This contradicts the previous estimate, since $r\geq 2\e^{-1}$.
\end{proof}

It is easy to deduce Proposition \ref{easystrong} from this result.
Of course, this fact on its own is not a very convincing demonstration
of the utility of the Hahn-Banach theorem, since for the norm
$\|f\|=\|\hf\|_\infty$ it is easy to write down an explicit
decomposition of $f$, as we saw in section \ref{u2struct}. But there
are other important norms where this is certainly not the case. For
example, if we take $\|f\|$ to be $\|f\|_{U^k}$ for a larger $k$,
then there is no obvious decomposition of $f$ from which we can
read off three functions $f_1$, $f_2$ and $f_3$ with the required
properties.

\subsection{Deducing decomposition theorems from inverse theorems.}
\label{decompositions}

The main result of \cite{gowerswolf} shows that certain linear
configurations occur with the ``expected'' frequency in any set $A$
for which the balanced function $A-\d$ (where $\d$ is the density of
$A$) has sufficiently small $U^2$-norm. The interest in the result is
that the $U^2$-norm suffices for the configurations in question,
whereas the natural arguments that generalize the proof that the $U^k$
norm controls progressions of length $k-2$ would suggest that the
$U^3$-norm was needed. In order to prove the result, a form of
quadratic Fourier analysis was needed, as we have already
mentioned. The approach in \cite{gowerswolf} was to apply directly a
result of Green and Tao, which obtains a decomposition of a bounded
function $f$ by constructing an averaging projection $P$ with the
property that $f-Pf$ has small $U^3$ norm. However, there was a
technicality involved that forced us to use an iterated version of
their result that gives rise to very weak bounds.  In order to obtain
reasonable bounds for the problem, it turned out to be
convenient---indeed, as far as we could tell, necessary---to prove a
decomposition theorem that could be regarded as a quadratic analogue
of Proposition \ref{easystrong}, with the important difference that
the strong dependence of $\eta(m)$ on $m$ was not needed. (This was
why it was possible to obtain good bounds.)

The argument appears in \cite{gowerswolf2}, and it can be regarded as a special
case of a general principle that can be informally summarized as
follows: \emph{to each inverse theorem there is a corresponding
decomposition theorem}. It is possible to give a formal statement, as
will be clear from our discussion, but in practice it is much easier
to describe a \textit{method} for deducing decompositions from inverse
theorems than it is to state an artificial lemma that declares that
the method works. The main reason for this is that when one applies
the method, one typically starts with the decomposition one wants to
prove and the inverse theorem one \textit{can} prove, and adjusts the
former until it follows from the latter. We shall reflect this in our
discussion below: more precisely, we shall assume that a decomposition
of a certain general kind does \textit{not} exist, draw an easy 
consequence from this, and see when this consequence contradicts
any given inverse theorem.

Suppose, then, that we have a subset $\Sigma\subset\R^n$ of functions
that we regard as ``structured'', and suppose that the functions in
$\Sigma$ span $\R^n$.  Suppose also that we have another function $f$
that we would ideally like to decompose as a linear combination
$\sum_{i=1}^k\lambda_i\sigma_i$ of functions $\sigma_i\in\Sigma$ with
$\sum_{i=1}^k|\lambda_i|$ not too large, together with some error
terms. That is, we look for a result of the following kind.
\begin{decomposition*}
The function $f$ can be written in the form
\begin{equation*}
f=\sum_{i=1}^k\lambda_i\sigma_i+g_1+\dots+g_r,
\end{equation*}
where $\sum_{i=1}^k|\lambda_i|\leq M$, each $\sigma_i$ belongs to $\Sigma$,
and for each $j\leq r$ we have an
inequality of the form $\|g_j\|_{(j)}\leq\eta_j$. 
\end{decomposition*}
\noindent Typically, $r$ will be a very small integer such as 2.

Lemma \ref{maxdual} says that the formula
\begin{eqnarray*}
\|g\|&=&\inf\{\sum_{i=1}^k|\lambda_i|:
g=\sum_{i=1}^k\lambda_i\sigma_i,\ \sigma_i\in\Sigma\}\\
&=&\inf\{\sum_{i=1}^k\|g\|_{\sigma_i}:g=g_1+\dots+g_k,
\ \sigma_1,\dots,\sigma_k\in\Sigma,\ g_i\in V_{\sigma_i}\}\\
\end{eqnarray*}
defines a norm, and that the dual of this norm is the norm
\begin{equation*}
\|\phi\|^*=\max_{\sigma\in\Sigma}|\sp{\sigma,\phi}|.
\end{equation*}

Now let us suppose that no decomposition of the kind we are looking
for exists. This is equivalent to the assumption that $f$ has no
decomposition of the form $g_0+g_1+\dots+g_k$ with $\|g_0\|\leq M$ and
$\|f_i\|_{(i)}\leq\eta_i$ for every $i$. If this is the case, then by
Corollary \ref{hb1} there must be a linear functional $\phi$ such that
$\sp{f,\phi}>1$, $\|\phi\|^*\leq M^{-1}$, and
$\|\phi\|_{(i)}^*\leq\eta_i^{-1}$ for $i=1,2,\dots,r$. 

The statement that $\|\phi\|^*\leq M^{-1}$ tells us that 
$|\sp{\sigma,\phi}|\leq M^{-1}$ for every $\sigma\in\Sigma$.
Thus, what we would like is an inverse theorem that concludes
the opposite: that there must be some $\sigma\in\Sigma$ such
that $|\sp{\sigma,\phi}|>M^{-1}$. Before we think about this,
let us list the assumptions that we have at our disposal.

\begin{assumptions*} Suppose that there is no decomposition
$f=\sum_{i=1}^k\lambda_i\sigma_i+g_1+\dots+g_r$ such that
$\sum_{i=1}^k|\lambda_i|\leq M$, each $\sigma_i$ belongs to $\Sigma$,
and $\|g_j\|_{(j)}\leq\eta_j$ for each $j\leq r$. Then there exists
$\phi$ such that

(i) $\sp{\sigma,\phi}\leq M^{-1}$ for every $\sigma\in\Sigma$;

(ii) $\sp{f,\phi}>1$;

(iii) $\|\phi\|_{(j)}^*\leq\eta_j^{-1}$ for $j=1,2,\dots,r$.
\end{assumptions*}

The assumptions of an inverse theorem are typically that $f$
is not too big in one norm, such as, for instance, the 
$L_\infty$-norm, but not too small in another, such as the 
$U^3$-norm. The only information we have that could possibly
imply a lower bound on any norm of $\phi$ is the inequality
$\sp{f,\phi}>1$, and even that does not help unless we have
an \textit{upper} bound on some norm of $f$. (Of course, it
is hardly surprising that such a bound would be required
for a theorem that allows us to decompose $f$ into a bounded
combination of bounded functions.)

So let us suppose that we have an inverse theorem of the 
following form. (We have introduced the constant $K$ to
allow us to multiply $\phi$ by an arbitrary non-zero scalar.)

\begin{theorem*}
Let $\phi\in\R^n$ be a function such that $\|\phi\|\leq K$ and 
$|||\phi|||\geq\e$. Then there exists $\sigma\in\Sigma$ such
that $|\sp{\sigma,\phi}|\geq Kc(\e/K)$.
\end{theorem*}

\noindent This will be contradicted under the following circumstances:

(a) the upper bounds $\|\phi\|_{(i)}^*\leq\eta_i^{-1}$ imply that 
$\|\phi\|\leq K$;

(b) the upper bounds on the $\|\phi\|_{(i)}^*$, an upper bound
on some norm of $f$, and the lower bound $\sp{f,\phi}>1$, 
together imply that $|||\phi|||\geq\e$;

(c) $M^{-1}< Kc(\e/K)$.

\noindent For example, suppose that $M^{-1}<Kc(\e\eta)$ and we would
like a decomposition $f=\sum_{i=1}^k\lambda_i\sigma_i+g+h$ with
$\sum_{i=1}^k|\lambda_i|\leq M$, $|||g|||\leq\e$ and
$\|h\|^*\leq\eta$. If such a decomposition does not exist,
then we obtain $\phi$ such that $\sp{\sigma,\phi}<\eta^{-1}c(\e\eta)$
for every $\sigma\in\Sigma$, $|||\phi|||^*\leq\e^{-1}$,
$\|\phi\|\leq\eta^{-1}$, and $\sp{f,\phi}>1$. If we also know 
that $\|f\|_2\leq 1$, then it follows that $\|\phi\|_2\geq 1$.
But since $\|\phi\|_2^2\leq |||\phi|||.|||\phi|||^*$, it
follows that $|||\phi|||\geq\e$. This contradicts the
inverse theorem (with $K=\eta^{-1}$).

If we know a little bit more about $f$, then we can obtain a
correspondingly stronger result. For instance, suppose that
we know that $|||f|||^*\leq \e^{-1}$. Then the bound $\sp{f,\phi}>1$
immediately implies that $|||\phi|||>\e$, so we do not need
the error term $g$ in the decomposition.


Decomposition results obtained by the simple argument above---just
assume that a decomposition doesn't exist, apply Hahn-Banach, and
contradict an inverse theorem---can be very useful. However, in
order to use them one has to do a little more work. For example,
it is not usually trivial that a sum of the form 
$\sum_{i=1}^k\lambda_i\sigma_i$ is ``structured'', even if the
sum $\sum_{i=1}^k|\lambda_i|$ is smallish and all the individual
functions $\sigma_i$ are highly structured. The difficulty is that
$k$ may be very large, and in order to deal with it one tends to 
need a principle that says that functions $\sigma_i$ are either
``closely related'' or ``far apart''. A simple example is when
$\Sigma$ is the set of all characters, in which case any two
elements of $\Sigma$ are either identical or orthogonal. In \cite{gowerswolf2}
a lemma was proved to the effect that two generalized quadratic
phases were either ``linearly related'' or ``approximately
orthogonal''. That made it possible to replace the linear 
combination by a much smaller linear combination of slightly
more general functions.

A second point is that one sometimes wants more information about
the ``structured function'' $f_1=\sum_{i=1}^k\lambda_i\sigma_i$. For
instance, if $\|f\|_\infty\leq 1$ it can be extremely helpful to
know that $\|f_1\|_\infty\leq 1$ as well. This does not come
directly out of the method above, but it does when we combine
that method with methods that we shall discuss in the next
section.

Just before we finish this section, we observe that inverse theorems
can be used to prove strengthened decomposition theorems as well: that
is, ones where some of the $\eta_i$ can be made to depend on
$M$. Suppose, for example, that our inverse theorem tells us that
whenever $\|\phi\|_\infty\leq 1$ and $\|\phi\|\geq\e$ there must exist
$\sigma\in\Sigma$ such that $|{\sigma,\phi}|\geq c(\e)$. Suppose also
that (as often happens) $\|f\|^*\geq\|f\|_\infty$ for every
$f\in\R^n$. Now let $f$ be a function with $\|f\|_2\leq 1$ and use
Proposition \ref{easystrong} to write $f$ as $f_1+f_2+f_3$ with
$\|f_1\|^*\leq C$, $\|f_2\|\leq\eta(C)$ and $\|f_3\|_2\leq\theta$. In our
discussion just after the statement of the putative inverse theorem,
we observed that knowing that $\|f_1\|^*\leq C$ would yield a
decomposition $f_1=\sum_{i=1}^k\lambda_i\sigma_i+h$, where
$\sum_{i=1}^k|\lambda_i|\leq c(\theta C^{-2})$ (taking $\e=C^{-1}$, 
$K=C$, and replacing $\eta$ by $\theta$), and $\|h\|_1\leq\theta$.
Therefore, we can decompose $f$ as 
$\sum_{i=1}^k\lambda_i\sigma_i+f_2+f_3+h$, with 
$\sum_{i=1}^k|\lambda_i|\leq c(\theta C^{-2})$, $\|f_2\|\leq\eta(C)$,
and $\|f_3+h\|_1\leq 2\theta$. Since $\eta$ is an arbitrary function,
we can make it depend in an arbitrary way on $c(\theta C^{-2})$. Thus,
we have obtained the following result. (Note that the constants and 
functions are not the same as the constants and functions with
the same names in the discussion that has just finished.)
\begin{theorem}\label{decompositiontheorem}
Let $\Sigma$ be a subset of $\R^n$ that spans $\R^n$.
Let $\|.\|$ be a norm such that $\|f\|_\infty\leq\|f\|^*$ for every
$f\in\R^n$. Suppose that for every function $f$ with $\|f\|_\infty\leq 1$
and $\|f\|\geq\e$ there exists $\sigma\in\Sigma$ such that
$|\sp{f,\sigma}|\geq c(\e)$. Let $\theta>0$ and let $\eta$ be a 
decreasing function from $\R_+$ to $\R_+$. Then there exists a constant
$C_0$, depending on $\eta$ and $\theta$ only, such that every function 
$f\in\R^n$ with $\|f\|_2\leq 1$ has a decomposition
\begin{equation*}
f=\sum_{i=1}^k\lambda_i\sigma_i+f_2+f_3
\end{equation*}
with the following property: each $\sigma_i$ belongs to $\Sigma$ and 
there is a constant $C\leq C_0$ such that $\sum_{i=1}^k|\lambda_i|\leq C$, 
$\|f_2\|\leq\eta(C)$, and $\|f_3\|_1\leq\eta$. 
\end{theorem}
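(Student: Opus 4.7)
The plan is to follow the two-step strategy laid out in the paragraph immediately preceding the statement: first apply Proposition \ref{strongdecomposition} to decompose $f$ into a part with bounded dual norm, a $\|\cdot\|$-quasirandom part, and a small $L_2$-error; then use Hahn--Banach together with the hypothesized inverse theorem to refine the first part into a bounded $\ell_1$-combination of elements of $\Sigma$ plus a small $L_1$-error, which can be absorbed into the existing small error.

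\textbf{First step.} Apply Proposition \ref{strongdecomposition} to $f$ with the $L_2$-tolerance $\theta/2$ and an auxiliary decreasing function $\eta'\colon\R_+\to\R_+$ to be specified at the end of the argument. This yields a constant $C$, bounded above by some $C_0'=C_0'(\eta',\theta)$, together with a decomposition $f=f_1+f_2+f_3$ in which $\|f_1\|^*\leq C$, $\|f_2\|\leq\eta'(C)$, and $\|f_3\|_2\leq\theta/2$.

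\textbf{Second step.} Fix a target bound $M=M(C,\theta)$ to be chosen below and suppose for contradiction that $f_1$ admits no decomposition $\sum_{i=1}^k\lambda_i\sigma_i+h$ with $\sigma_i\in\Sigma$, $\sum_i|\lambda_i|\leq M$ and $\|h\|_1\leq\theta/2$. Apply Corollary \ref{hb1} with $K_1$ the absolutely convex hull of $\Sigma$ and $K_2$ the $L_1$-unit ball (and scaling constants $c_1=M$, $c_2=\theta/2$). This yields a functional $\phi$ with $\sp{f_1,\phi}>1$, $|\sp{\sigma,\phi}|\leq M^{-1}$ for every $\sigma\in\Sigma$, and $\|\phi\|_\infty\leq 2/\theta$. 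The duality estimate $1<\sp{f_1,\phi}\leq\|f_1\|^*\,\|\phi\|$ forces $\|\phi\|>C^{-1}$. Now rescale to $\psi=(\theta/2)\phi$, so that $\|\psi\|_\infty\leq 1$ and $\|\psi\|>\theta/(2C)$, and invoke the hypothesized inverse theorem to produce $\sigma\in\Sigma$ with $|\sp{\sigma,\psi}|\geq c(\theta/(2C))$, equivalently $|\sp{\sigma,\phi}|\geq (2/\theta)\,c(\theta/(2C))$. Choosing $M$ slightly greater than $\theta/(2c(\theta/(2C)))$ contradicts the bound $|\sp{\sigma,\phi}|\leq M^{-1}$, so the desired decomposition of $f_1$ does exist for this value of $M$.

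\textbf{Combining and closing the loop.} Gluing the two steps together yields $f=\sum_i\lambda_i\sigma_i+f_2+(f_3+h)$ with $\sum_i|\lambda_i|\leq M$, $\|f_2\|\leq\eta'(C)$, and $\|f_3+h\|_1\leq\|f_3\|_2+\theta/2\leq\theta$ (using $\|f_3\|_1\leq\|f_3\|_2$ for the probabilistic normalisation of $L_p$). Since the function $c$ supplied by the inverse theorem is increasing, $M(C,\theta)$ is increasing in $C$, and so $\eta'(C):=\eta(M(C,\theta))$ defines a legitimate decreasing function; plugging it into the first step then guarantees $\|f_2\|\leq\eta(M)$, as required. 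Taking $C_0:=\sup\{M(C,\theta):C\leq C_0'\}$, a quantity depending only on $\eta$ and $\theta$, finishes the proof. The main obstacle is the mildly circular bookkeeping that $\eta'$ depends on $M$, $M$ depends on $C$, and $C$ depends on $\eta'$ through Proposition \ref{strongdecomposition}: this is resolved because $\eta'$ only influences the intermediate constant $C_0'$ and not the internal structure of either step, so one may first fix the formula for $M(C,\theta)$, then define $\eta'$, and finally invoke Proposition \ref{strongdecomposition}.
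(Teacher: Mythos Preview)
Your proof is correct and follows essentially the same two-step strategy that the paper sketches in the paragraph immediately preceding the theorem: first invoke Proposition~\ref{strongdecomposition} to get a piece $f_1$ with bounded $\|\cdot\|^*$-norm, then run the Hahn--Banach argument (with the inverse hypothesis, using the bound $\|f_1\|^*\leq C$ to force $\|\phi\|>C^{-1}$) to split $f_1$ into a bounded $\ell_1$-combination of elements of $\Sigma$ plus a small $L_1$-error. Your handling of the constants and of the apparent circularity between $\eta'$, $M$, and $C$ is in fact tidier than the paper's own terse discussion; the only cosmetic point is that the hypothesis does not literally assert that $c$ is increasing, but one may of course replace $c(\epsilon)$ by $\inf_{\epsilon'\geq\epsilon}c(\epsilon')$ without loss.
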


\section{The positivity and boundedness problems.}

Although our results so far are sometimes useful, they have a serious
limitation. Suppose, for example, that we wish to use Proposition
\ref{strongdecomposition}. What we would like to do is use the
structural properties of $h$ to prove that certain quantities, such as
$\E_{x,d}h(x)h(x+d)h(x+2d)$, are large, and then to show that $f=g+h$
is a ``random enough'' perturbation of $h$ for
$\E_{x,d}f(x)f(x+d)f(x+2d)$ to be large as well. But even if $h$ is a
small linear combination of just a few trigonometric functions, there
is no particular reason for $\E_{x,d}h(x)h(x+d)h(x+2d)$ to be
large. If we want it to be large, then we need additional
assumptions. The most useful one in practice is \textit{positivity}.

Suppose that $f\in\R^n$ is a function with $\|f\|_2\leq 1$ and 
that it takes non-negative values. With an appropriate choice of
norm $\|.\|$, Proposition \ref{strongdecomposition} allows us to 
decompose $f$ into a ``structured part'', a ``quasirandom part''
and a small $L_2$ error. One's intuition suggests that the
structured part of a non-negative function should not need to 
take negative values, and this turns out to be correct for the
norms discussed in section \ref{uniformitynorms}.

In section \ref{taostruct} we shall prove a very general
result of this kind. In this section, we shall prove some simpler
results that illustrate the method of polynomial approximations;
we shall use this method repeatedly later. 

\subsection{Algebra norms, polynomial approximation and a first 
transference theorem.}

To begin with, we need a definition that will pick out the class of
norms for which we can prove results. Actually, for now we shall give
a definition that is not always broad enough to be useful. In the next
section we shall define a broader class of norms to which the method
still applies.

\begin{definition} Let $X$ be a finite set. An \emph{algebra 
norm} on $\R^X$ is a norm $\|.\|$ such that $\|fg\|\leq\|f\|\|g\|$
for any two functions $f$ and $g$, and $\|\mathbf{1}\|=1$. 
\end{definition}

A good example of an algebra norm---indeed, the central example---is
the $\ell_1$-norm of the Fourier transform of $f$, which
has the submultiplicativity property because 
\begin{equation*}
\|\widehat{fg}\|_1=\|\hf*\hg\|_1\leq\|\hf\|_1\|\hg\|_1.
\end{equation*}
The predual of this norm (it is of course the dual as well but
we shall be thinking of it as the primary norm and the algebra
norm as its dual) is the $\ell_\infty$ norm of $\hf$, which,
as we have already seen, is in a crude sense equivalent to the
$U^2$-norm for many functions of interest.

We shall use the following simple lemma repeatedly.

\begin{lemma}\label{algebrabasics}
Let $\|.\|$ be a norm on $\R^n$ such that the dual norm $\|.\|^*$ is
an algebra norm. Then $\|f\|\geq|\E_xf(x)|$ and
$\|f\|^*\geq\|f\|_\infty$ for every function $f$.
\end{lemma}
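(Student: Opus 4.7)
The plan is to handle the two inequalities separately: the first is immediate from duality, while the second follows from a Gelfand-style spectral-radius trick that uses the submultiplicativity of $\|.\|^*$.

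For $\|f\|\geq|\E_xf(x)|$, I would simply observe that $\E_xf(x)=\sp{f,\mathbf{1}}$, so by the defining property of the dual norm and the algebra-norm axiom $\|\mathbf{1}\|^*=1$, one obtains $|\E_xf(x)|=|\sp{f,\mathbf{1}}|\leq\|f\|\cdot\|\mathbf{1}\|^*=\|f\|$.

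For $\|f\|^*\geq\|f\|_\infty$, the plan is to pass to large powers of $f$. Because $\R^n$ is finite-dimensional, all norms on it are equivalent, so there is some (possibly tiny) constant $c>0$ such that $\|g\|^*\geq c\|g\|_\infty$ for every $g$. Applying this to $g=f^n$, together with submultiplicativity of $\|.\|^*$ and the pointwise identity $\|f^n\|_\infty=\|f\|_\infty^n$, gives
\begin{equation*}
(\|f\|^*)^n\geq\|f^n\|^*\geq c\|f^n\|_\infty=c\|f\|_\infty^n.
\end{equation*}
Taking $n$-th roots and sending $n\to\infty$ eliminates the nuisance constant since $c^{1/n}\to 1$, yielding $\|f\|^*\geq\|f\|_\infty$. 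The only mild subtlety is recognising that the naive equivalence constant $c$ has to be laundered away by this power trick; once one views $\R^n$ as a commutative Banach algebra in which $\|f\|_\infty$ is the spectral radius of $f$, the argument is standard and I expect no real obstacle.
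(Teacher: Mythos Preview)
Your proof is correct and follows essentially the same approach as the paper. For the second inequality the paper also passes to powers $f^n$ and uses (implicitly) the equivalence of norms on $\R^n$; the only cosmetic difference is that the paper normalises to $\|f\|^*\leq 1$ and argues by contradiction (the sequence $\|f^n\|^*$ stays bounded while a coordinate of $f^n$ would blow up if $\|f\|_\infty>1$), whereas you make the spectral-radius limit $c^{1/n}\to 1$ explicit.
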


\begin{proof}
Since $\|.\|^*$ is an algebra norm, $\|\mathbf{1}\|^*=1$, so
$\|f\|\geq|\sp{f,\mathbf{1}}|=|\E_xf(x)|$.

For the second part, if $\|f\|^*\leq 1$ then $\|f^n\|^*\leq 1$ for 
every $n$. It follows that $\|f\|_\infty\leq 1$, since otherwise
at least one coordinate of $f_n$ would be unbounded. Therefore,
$\|f\|_\infty\leq\|f\|^*$ for every $f$. 
\end{proof}

The Weierstrass approximation theorem tells us that every continuous
function on a closed bounded interval can be uniformly approximated
by polynomials. It will be helpful to define a function connected
with this result. Given a real polynomial $P$, let $R_P$ be the
polynomial obtained from $P$ by replacing all the coefficients
of $P$ by their absolute values. If $J:\R\ra\R$ is a continuous 
function, $C$ is a positive real number and $\d>0$, let $\rho(C,\d,J)$
be twice the infimum of $R_P(C)$ over all polynomials $P$ such that
$|P(x)-J(x)|\leq\d$ for every $x\in[-C,C]$. So that it will not be
necessary to remember the definition of $\rho(C,\d,J)$ we now state
and prove a simple but very useful lemma.

\begin{lemma}\label{polyapprox1}
Let $\|.\|^*$ be an algebra norm, let $J:\R\ra\R$ be a continuous
function and let $C$ and $\d$ be positive real numbers. Then there
exists a polynomial $P$ such that $\|P\phi-J\phi\|_\infty\leq\d$
and $\|P\phi\|^*\leq\rho(C,\d,J)$ for every $\phi\in\R^n$ such 
that $\|\phi\|^*\leq C$.
\end{lemma}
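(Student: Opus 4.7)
The plan is to unpack the definition of $\rho(C,\delta,J)$, choose a suitable polynomial $P$ witnessing the infimum, and then verify the two claimed bounds, one pointwise and one in dual norm, using respectively the fact that algebra norms dominate $\|\cdot\|_\infty$ (Lemma \ref{algebrabasics}) and submultiplicativity.

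More concretely, the first step is to fix a polynomial $P$ with $|P(t)-J(t)|\leq\delta$ for all $t\in[-C,C]$ and with $R_P(C)\leq\rho(C,\delta,J)$. This is possible because $\rho(C,\delta,J)$ is defined to be \emph{twice} the infimum of $R_P(C)$ over admissible polynomials, so there is plenty of slack to choose $P$ achieving a value of $R_P(C)$ below $\rho(C,\delta,J)$ (in fact arbitrarily close to $\rho(C,\delta,J)/2$ from above). Write $P(t)=\sum_{k=0}^d a_k t^k$, so that, applied pointwise to $\phi$, we have $P\phi=\sum_{k=0}^d a_k\phi^k$.

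Now fix any $\phi\in\R^n$ with $\|\phi\|^*\leq C$. By Lemma \ref{algebrabasics}, the algebra-norm hypothesis on $\|\cdot\|^*$ gives $\|\phi\|_\infty\leq\|\phi\|^*\leq C$, so $\phi(x)\in[-C,C]$ for every $x$. Thus at each point $x$ the uniform approximation property of $P$ yields $|P(\phi(x))-J(\phi(x))|\leq\delta$, which is precisely $\|P\phi-J\phi\|_\infty\leq\delta$.

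For the dual-norm bound, apply the triangle inequality and then induct on $k$ using submultiplicativity $\|\phi^k\|^*\leq(\|\phi\|^*)^k$ (together with $\|\mathbf{1}\|^*=1$ handling the constant term). This yields
\begin{equation*}
\|P\phi\|^*\leq\sum_{k=0}^d|a_k|\,\|\phi^k\|^*\leq\sum_{k=0}^d|a_k|(\|\phi\|^*)^k\leq\sum_{k=0}^d|a_k|C^k=R_P(C)\leq\rho(C,\delta,J),
\end{equation*}
as required. There is no real obstacle here: the only conceptual point is recognising that $\|\phi\|^*\leq C$ forces $\phi$ to take values in $[-C,C]$, which is exactly where the polynomial approximation is valid, and that the definition of $R_P$ (replacing coefficients by absolute values) is designed precisely so that submultiplicativity converts it into a bound on $\|P\phi\|^*$.
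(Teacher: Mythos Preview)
Your proof is correct and follows essentially the same route as the paper: choose $P$ witnessing the infimum (using the factor of two in the definition of $\rho$ to guarantee $R_P(C)\leq\rho(C,\delta,J)$), use Lemma~\ref{algebrabasics} to get $\|\phi\|_\infty\leq C$ and hence the pointwise bound, then use submultiplicativity and the triangle inequality for the dual-norm bound. If anything, you are slightly more explicit than the paper about why the infimum in the definition of $\rho$ can be realised by an actual polynomial.
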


\begin{proof}
It is immediate from the definition of $\rho(C,\d,J)$ that for every
$C$ and every $\d>0$ there exists a polynomial $P$ such that
$|P(x)-J(x)|\leq\d$ for every $x\in[-C,C]$, and such that
$R_P(C)\leq\rho(C,\d,J)$.

Now let $\phi\in\R^n$ be a function with $\|\phi\|^*\leq C$. Then
$\|\phi\|_\infty\leq C$ as well, since $\|.\|^*$ is an algebra norm.
Since $P$ and $J$ agree to within $\d$ on $[-C,C]$, it follows that
$\|P\phi-J\phi\|_\infty\leq\d$. 

Suppose that $P$ is the polynomial $P(x)=a_nx^n+\dots+a_1x+a_0$. 
Then, by the triangle inequality and the algebra property of $\|.\|^*$,
\begin{eqnarray*}
\|P\phi\|&\leq&|a_n|\|\phi^n\|^*+\dots+|a_1|\|\phi\|^*+|a_0|\\
&\leq&|a_n|(\|\phi\|^*)^n+\dots+|a_1|\|\phi\|^*+|a_0|\\
&=&R_P(\|\phi\|^*).\\
\end{eqnarray*}
Since the coefficients of $R_P$ are all non-negative, this is at
most $R_P(C)$, which is at most $\rho(C,\d,J)$, by our choice of $P$.
\end{proof}

In more qualitative terms, the above lemma tells us that if $\phi$ is
bounded in an algebra norm and we compose it with an arbitrary
continuous function $J$, then the resulting function $J\phi$ can be
uniformly approximated by functions that are still bounded in the
algebra norm.



The next result is our first transference theorem of the paper. It
tells us that if $\mu$ and $\nu$ are non-negative functions on a set
$X$ and they are sufficiently close in an appropriate norm, then
any non-negative function that is dominated by $\mu$ can be 
``transferred to''---that is, approximated by---a non-negative
function that is dominated by $\nu$. We shall apply this principle
in Section 5. As we shall see later in this section, it is also
not hard to generalize the result to obtain a generalized version
of the Green-Tao transference theorem.

\begin{theorem}\label{transference}
Let $\mu$ and $\nu$ be non-negative functions on a set $X$ and suppose
that $\|\mu\|_1$ and $\|\nu\|_1$ are both at most 1. Let
$\eta,\d>0$, let $J:\R\ra\R$ be the function given
by $J(x)=(x+|x|)/2$ and let $\e=\d/2\rho(\eta^{-1},\d/4,J)$. Let $\|.\|$ be a
norm on $\R^X$ such that the dual norm $\|.\|^*$ is an algebra norm
and suppose that $\|\mu-\nu\|\leq\e$. Then for every function $f$ with
$0\leq f\leq\mu$ there exists a function $g$ such that $0\leq
g\leq\nu(1-\d)^{-1}$ and $\|f-g\|\leq\eta$.
\end{theorem}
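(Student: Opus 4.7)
The plan is to argue by contradiction using the Hahn--Banach separation of Corollary \ref{hb1}, and to absorb the nonlinearity of $J$ via the polynomial-approximation machinery of Lemma \ref{polyapprox1}.

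First I would set $K_1 = \{g \in \R^X : 0 \leq g \leq \nu(1-\d)^{-1}\}$ and take $K_2$ to be the closed unit ball of $\|.\|$. Both are closed and convex and contain $0$. The conclusion of the theorem is precisely that $f$ lies in $K_1 + \eta K_2$. If this fails, Corollary \ref{hb1} (with $c_1 = 1$, $c_2 = \eta$) produces a functional $\phi$ such that $\sp{f,\phi} > 1$, $\sp{g,\phi} \leq 1$ for every $g \in K_1$, and $\|\phi\|^* \leq \eta^{-1}$.

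From this I would extract two inequalities. Optimizing $\sp{g,\phi}$ over $g \in K_1$ by taking $g(x) = \nu(x)(1-\d)^{-1}$ where $\phi(x) > 0$ and $g(x)=0$ elsewhere gives $(1-\d)^{-1}\sp{\nu, J\phi} \leq 1$. On the other hand, since $0 \leq f \leq \mu$ and $J\phi \geq \phi$ pointwise, $\sp{\mu, J\phi} \geq \sp{f, J\phi} \geq \sp{f,\phi} > 1$. Subtracting the two inequalities yields the key bound $\sp{\mu - \nu, J\phi} > \d$.

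The main obstacle, and the final step, is to show that the hypothesis $\|\mu - \nu\| \leq \e$ forces $\sp{\mu - \nu, J\phi} \leq \d$, giving the contradiction. The difficulty is that $J$ is not a polynomial, so nothing a priori controls $\|J\phi\|^*$, even though $\|\phi\|^* \leq \eta^{-1}$. To bridge this gap I apply Lemma \ref{polyapprox1} with $C = \eta^{-1}$ and tolerance $\d/4$, producing a polynomial $P$ with $\|P\phi - J\phi\|_\infty \leq \d/4$ and $\|P\phi\|^* \leq \rho(\eta^{-1}, \d/4, J)$. Writing $J\phi = P\phi + (J\phi - P\phi)$, the first piece is controlled via the dual-norm pairing, $|\sp{\mu-\nu, P\phi}| \leq \|\mu-\nu\|\cdot\|P\phi\|^* \leq \e\,\rho(\eta^{-1},\d/4,J) = \d/2$, by the choice of $\e$; the second piece is controlled via the $L^\infty$--$L^1$ pairing, $|\sp{\mu-\nu, J\phi - P\phi}| \leq \|\mu-\nu\|_1\,\|J\phi - P\phi\|_\infty \leq 2\cdot\d/4 = \d/2$, using $\|\mu\|_1, \|\nu\|_1 \leq 1$. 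Adding these bounds contradicts $\sp{\mu-\nu, J\phi} > \d$, completing the argument.
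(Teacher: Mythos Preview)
Your proof is correct and follows essentially the same approach as the paper: Hahn--Banach separation via Corollary \ref{hb1}, the identification of the optimum over $K_1$ with $\sp{\nu,J\phi}$, and the polynomial approximation Lemma \ref{polyapprox1} to control $J\phi$. The only difference is organizational: the paper chains the estimates as $\sp{\nu,\phi_+}\to\sp{\nu,P\phi}\to\sp{\mu,P\phi}\to\sp{\mu,\phi_+}\to\sp{f,\phi_+}\to\sp{f,\phi}$, whereas you first isolate the single quantity $\sp{\mu-\nu,J\phi}>\d$ and then bound it by splitting $J\phi=P\phi+(J\phi-P\phi)$; the arithmetic and the ingredients are identical.
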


\begin{proof} An equivalent way of stating the conclusion is that
$f=g+h$ with $0\leq g\leq\nu(1-\d)^{-1}$ and $\|h\|\leq\eta$. Thus,
if the result is false then we can find a functional
$\phi$ such that $\sp{f,\phi}>1$, but $\sp{g,\phi}\leq 1$ for
every $g$ such that $0\leq g\leq\nu(1-\d)^{-1}$, and 
$\|\phi\|^*\leq\eta^{-1}$.

The first condition on $\phi$ is equivalent to the statement that
$\sp{\nu,\phi_+}\leq 1-\d$. To see this, note that for any $\phi$,
the $g$ that maximizes $\sp{g,\phi}$ takes the value $0$ when
$\phi(x)<0$ and $\nu(x)(1-\d)^{-1}$ when $\phi(x)\geq 0$, in which
case $\sp{g,\phi}=(1-\d)^{-1}\sp{\nu,\phi_+}$. 

Now $\phi_+$ is equal to $J\phi$. Since $\|.\|^*$ is an algebra norm,
we can apply Lemma \ref{polyapprox1} and obtain a polynomial $P$ such
that $\|P\phi-\phi_+\|_\infty\leq\d/4$ and $\|P\phi\|^*\leq
R_P(C)=\rho(\eta^{-1},\d/4,J)$, which we shall abbreviate to $\rho$.

Since $\sp{\nu,\phi_+}\leq 1-\d$ and $\|\nu\|_1\leq 1$, it follows that 
$\sp{\nu,P\phi}\leq 1-3\d/4$. Since $\|P\phi\|^*\leq\rho$ and 
$\|\mu-\nu\|\leq\e$, it follows that $\sp{\mu,P\phi}\leq 1-3\d/4+\e\rho$.
Since $\|\mu\|_1\leq 1$, it follows that $\sp{\mu,\phi_+}\leq 1-\d/2+\e\rho$.
Since $f\leq\mu$ it follows that $\sp{f,\phi_+}\leq 1-\d/2+\e\rho$,
and since $f\geq 0$ it follows that $\sp{f,\phi}\leq 1-\d/2+\e\rho$,
which is a contradiction.
\end{proof}

\subsection{Approximate duality and algebra-like structures.}\label{approxdual}

As the previous section shows, we can carry out
polynomial-approximation arguments when we are looking at a norm
$\|.\|$ for which the dual norm $\|.\|^*$ is an algebra norm. A key
insight of Green and Tao (which has received less comment than other
aspects of their proof) is that one can carry out
polynomial-approximation arguments under hypotheses that are weaker in
two respects: one can use pairs of norms that are not precisely dual to
each other, and the norm that measures structure can have much weaker
properties than those of an algebra norm. It is not hard to generalize
the arguments in an appropriate way: the insight was to see that there
were important situations in which one could obtain the weaker hypotheses
even when the stronger ones were completely false.

To see why this might be, think once again about the one
algebra norm we have so far considered, namely $\|\hf\|_\infty$. For
bounded functions $f$, this is closely related (by Proposition
\ref{u2inverse} and the remark after it) to $\|\hf\|_4$, which equals
the $U^2$-norm, so we can deduce facts related to the $U^2$-norm from
the fact that $\|\hf\|_1$ is an algebra norm.

We can regard this argument as carrying out the following procedure.
First, we establish an inverse theorem for the $U^2$-norm: this is
what we did in Proposition \ref{u2inverse}. We then note that the
functions that we obtain in the inverse theorem, namely the characters,
are closed under pointwise multiplication. And then we make the following
observation.

\begin{lemma}\label{definingalgebranorms}
Let $X$ be a set of functions in $\C^n$ that spans all of $\C^n$, 
contains the constant function $\mathbf{1}$, and is closed under pointwise
multiplication. Suppose also that $\|\phi\|_\infty\leq 1$ for every
function $\phi\in X$. Then the norm $\|.\|$ on $\R^n$ defined by the 
formula
\begin{equation*}
\|f\|=\inf\bigl\{\sum_{i=1}^k|\lambda_i|:f_1,\dots,f_k\in X, 
\ f=\sum_{i=1}^k\lambda_if_i\bigr\}
\end{equation*}
is an algebra norm.
\end{lemma}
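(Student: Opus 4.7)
The plan is to verify the three defining properties of an algebra norm in turn: (a) the infimum really defines a norm on the ambient space, (b) $\|\mathbf{1}\|=1$, and (c) the submultiplicativity $\|fg\|\leq\|f\|\|g\|$. Part (a) is essentially free from Corollary \ref{convexhulldual}, which was already established in exactly this setup; the hypothesis that $X$ spans the ambient space guarantees the infimum is finite and the norm axioms follow, so the content of the lemma lies in (b) and (c).

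For (b), I would first prove the general pointwise bound $\|f\|_\infty\leq\|f\|$. Given any admissible representation $f=\sum_{i=1}^k\lambda_if_i$ with $f_i\in X$, the assumption $\|f_i\|_\infty\leq 1$ gives, for each $x$,
\[ |f(x)|\leq\sum_{i=1}^k|\lambda_i||f_i(x)|\leq\sum_{i=1}^k|\lambda_i|, \]
and taking the infimum over representations yields $\|f\|_\infty\leq\|f\|$. Combining this with the fact that $\mathbf{1}\in X$ (which provides the trivial representation $\mathbf{1}=1\cdot\mathbf{1}$ and thus $\|\mathbf{1}\|\leq 1$) pins down $\|\mathbf{1}\|=1$.

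The heart of the argument is (c), and it is the only place the closure of $X$ under pointwise multiplication is used. Given $\eta>0$, I would take near-optimal representations $f=\sum_i\lambda_if_i$ and $g=\sum_j\mu_jg_j$ with $\sum_i|\lambda_i|\leq\|f\|+\eta$ and $\sum_j|\mu_j|\leq\|g\|+\eta$, and simply expand
\[ fg=\sum_{i,j}\lambda_i\mu_j(f_ig_j). \]
Since $X$ is closed under pointwise multiplication, each $f_ig_j$ lies in $X$, so this is an admissible representation of $fg$, giving
\[ \|fg\|\leq\sum_{i,j}|\lambda_i\mu_j|=\Bigl(\sum_i|\lambda_i|\Bigr)\Bigl(\sum_j|\mu_j|\Bigr)\leq(\|f\|+\eta)(\|g\|+\eta). \]
Letting $\eta\to 0$ gives submultiplicativity.

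There is no substantive obstacle: the lemma is really an unpacking of how the norm is defined, with the multiplicative structure of $X$ translating directly into submultiplicativity of the norm. The only mildly delicate point is the book-keeping around scalars, since $X\subset\C^n$ while the norm is declared on $\R^n$; I would handle this by allowing complex coefficients $\lambda_i$ in the infimum and observing that for real-valued $f$ this yields the same value (by averaging each representation with its complex conjugate), so that complex-valued products $f_ig_j$ can be freely used in the expansion above.
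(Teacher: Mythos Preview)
Your proposal is correct and follows essentially the same approach as the paper: the submultiplicativity is proved by expanding $fg$ over a double sum of products $f_ig_j\in X$, and $\|\mathbf{1}\|=1$ comes from $\mathbf{1}\in X$ together with the $L_\infty$ bound on elements of $X$. You are in fact more careful than the paper on two points---the explicit use of near-optimal representations via $\eta$, and the $\C^n$ versus $\R^n$ scalar issue---both of which the paper simply elides.
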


\begin{proof}
Suppose that $f=\sum_{i=1}^k\lambda_if_i$ and
$g=\sum_{j=1}^l\mu_jg_j$, with all $f_i$ and $g_j$ in $X$. Then 
$fg=\sum_{i=1}^k\sum_{j=1}^l\lambda_i\mu_jf_ig_j$. Since $X$ is
closed under pointwise multiplication, each $f_ig_j$ belongs to 
$X$. Moreover, $\sum_{i=1}^k\sum_{j=1}^l|\lambda_i||\mu_j|
=\sum_{i=1}^k|\lambda_i|\sum_{j=1}^l|\mu_j|$. From this the
submultiplicativity follows easily. The fact that $\|\mathbf{1}\|=1$
follows from the assumption that $\mathbf{1}\in X$ and that all
functions in $X$ have $L_\infty$-norm at most 1. 
\end{proof}

In the case where $X$ is the set of all characters on a finite Abelian
group, the norm given by Lemma \ref{definingalgebranorms} is the
$\ell_1$-norm of the Fourier transform.

Now suppose that we want to prove comparable facts about the $U^3$-norm.
An obvious approach would be to use Theorem \ref{u3inverse}, the
inverse theorem for the $U^3$-norm. However, the generalized quadratic
phase functions that appear in the conclusion of that theorem are
not quite closed under pointwise multiplication: associated with
them are certain parameters that one wants to be small, which obey
rules such as $\gamma(fg)\leq\gamma(f)+\gamma(g)$. 

As we shall see, this is not a serious difficulty, because often
one can restrict attention to products of a bounded number of
functions that an inverse theorem provides. A more fundamental
problem is that for the higher $U^k$-norms we do not (yet) have
an inverse theorem. Or at least, we do not have an inverse theorem
where the function that appears in the conclusion can be explicitly
described. What Green and Tao did to get round this difficulty was
to define a class of functions that they called basic anti-uniform
functions, and to prove a ``soft'' inverse theorem concerning those
functions. 

\begin{definition} 
For every function $f\in\R^n$, let $\cD f$ be
the function defined by the formula
\begin{equation*}
\cD f(x)=\E_{a,b,c}f(x+a)f(x+b)f(x+c)\ol{f(x+a+b)f(x+a+c)f(x+b+c)}
f(x+a+b+c).
\end{equation*}
Let $X$ be a subset of $\R^n$. A \emph{basic anti-uniform function} 
(with respect to $X$) is a function of the form $\cD f$ with $f\in X$.
\end{definition} 

Needless to say, the above definition generalizes straightforwardly
to a class of basic anti-uniform functions for the $U^k$-norm,
for any $k$. The same applies to the next proposition.

\begin{proposition}\label{u3softinverse}
Let $X$ be a subset of $\R^n$ and let $f\in X$ be a function such that
$\|f\|_{U^3}\geq\e$. Then there is a basic anti-uniform function $g$,
with respect to $X$, such that $\sp{f,g}\geq\e^8$.
\end{proposition}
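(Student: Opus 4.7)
The plan is to exhibit the basic anti-uniform function explicitly: I will take $g=\cD f$ itself, and verify by a direct computation that $\langle f,\cD f\rangle = \|f\|_{U^3}^8$. Since $f\in X$ by hypothesis, $\cD f$ is by definition a basic anti-uniform function with respect to $X$, so once the identity is established the conclusion $\langle f,g\rangle\geq\e^8$ follows immediately from the assumption $\|f\|_{U^3}\geq\e$.

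To verify the identity, I would unfold the two definitions and compare. From the formula for $\cD f$,
\begin{equation*}
\ol{\cD f(x)}=\E_{a,b,c}\ol{f(x+a)f(x+b)f(x+c)}f(x+a+b)f(x+a+c)f(x+b+c)\ol{f(x+a+b+c)},
\end{equation*}
so pairing with $f$ in the inner product $\langle\cdot,\cdot\rangle=\E_x\,(\cdot)\ol{(\cdot)}$ and absorbing the outer expectation over $x$ gives
\begin{equation*}
\langle f,\cD f\rangle=\E_{x,a,b,c}f(x)\ol{f(x+a)f(x+b)f(x+c)}f(x+a+b)f(x+a+c)f(x+b+c)\ol{f(x+a+b+c)},
\end{equation*}
which is exactly the expression defining $\|f\|_{U^3}^8$ in the earlier subsection on uniformity norms.

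The only point that needs any care is the placement of complex conjugates. The eight corners of the cube $\{x+\e_1 a+\e_2 b+\e_3 c:\e\in\{0,1\}^3\}$ split into those with $|\e|$ even and those with $|\e|$ odd; in the definition of $\|\cdot\|_{U^3}^8$ the odd-parity corners carry the conjugation, while in the definition of $\cD f$ the basepoint $x$ is missing and the parity convention is shifted by one, so that the conjugation pattern in $\ol{\cD f(x)}$ matches exactly what is needed to reproduce the $U^3$-Gowers inner product. Thus the identity really is just a matching of notation.

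There is no substantive obstacle: the proposition is essentially a definitional statement, and $\cD$ is set up precisely so that $\cD f$ serves as a ``dual'' function witnessing the $U^3$-norm. The content of this ``soft inverse theorem'' is not in the proof but in the subsequent observation that such functions $\cD f$, even though not explicitly described, still form a usable family with which to replace the unknown class of generalized quadratic phases in the quadratic Fourier analysis machinery.
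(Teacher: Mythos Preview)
Your proposal is correct and is essentially identical to the paper's own proof: the paper simply takes $g=\cD f$ and observes that expanding $\sp{f,\cD f}$ yields exactly the formula for $\|f\|_{U^3}^8$. Your additional remarks about the conjugation pattern are more detailed than what the paper writes, but the argument is the same.
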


\begin{proof}
The way we have stated the result is rather artificial, since the
basic anti-uniform function in question is nothing other than $\cD f$.
Moreover, it is trivial that $\sp{f,\cD f}\geq\e^8$, since if we expand
the left-hand side we obtain the formula for $\|f\|_{U^3}^8$.
\end{proof}

Of course, the price one pays for such a simple proof is that one has
far less information about basic anti-uniform functions than one would
have about something like a generalized polynomial phase function. In
particular, it is not obvious what one can say about products of basic
anti-uniform functions.

We remark here that an inequality proved in \cite{gowers} implies 
easily that $\sp{g,\cD f}\leq\|g\|_{U^3}\|f\|_{U^3}^7$ for every function
$g$. Thus, $\|\cD f\|_{U^3}^*\leq\|f\|_{U^3}^7$. Since $\sp{f,\cD
f}=\|f\|_{U^3}\|f\|_{U^3}^7$, we see that $\cD f$ is a support
functional for $f$. It is not hard to show that it is unique (up to a
scalar multiple). Since every function is a support functional for
something, it may seem as though there is something odd about the
definition of a basic anti-uniform function. However, it is less
all-encompassing than it seems, because we are restricting attention
to functions $\cD f$ for which $f$ belongs to some specified class of
functions $X$. (Nevertheless, we shall usually choose $X$ in such a way
that every function is a multiple of a basic anti-uniform function.)

A crucial fact that Green and Tao proved about basic anti-uniform
functions is that, for suitable sets $X$, their products have 
$(U^k)^*$-norms that can be controlled. To be precise, they proved
the following lemma. (It is not stated as a lemma, but rather as
the beginning step in the proof of their Lemma 6.3.)

\begin{lemma}\label{bauproducts} For every positive integer $K$ there 
is a constant $C_K$ such that if $\cD f_1,\dots,\cD f_K$ are basic
anti-uniform functions [with respect to a suitable set $X$], then
$\|\cD f_1\dots\cD f_K\|_{U^k}^*\leq C_K$.
\end{lemma}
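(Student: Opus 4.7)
The plan is to prove the equivalent inequality
\[
|\langle g,\cD f_1\cdots\cD f_K\rangle|\le C_K\|g\|_{U^k}\qquad\text{for every }g,
\]
by induction on $K$, using iterated applications of the Cauchy--Schwarz--Gowers inequality (CSG). For the base case $K=1$, unfolding the definition of $\cD f$ exhibits $\langle g,\cD f\rangle$ as precisely the Gowers $U^k$ inner product with $g$ in the $0$-slot and $2^k-1$ shifted, conjugated copies of $f$ in the remaining slots. CSG then yields $|\langle g,\cD f\rangle|\le\|g\|_{U^k}\|f\|_{U^k}^{2^k-1}\le\|g\|_{U^k}$, the last inequality using the ``suitability'' of $X$ in its simplest form: every $f\in X$ is bounded in $L^\infty$ by $1$ (and hence $\|f\|_{U^k}\le 1$).

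For $K\ge 2$ I would peel off one dual function at a time: since $\langle g,AB\rangle=\langle gA,B\rangle$,
\[
|\langle g,\cD f_1\cdots\cD f_K\rangle|=|\langle g\cdot\cD f_K,\,\cD f_1\cdots\cD f_{K-1}\rangle|\le C_{K-1}\,\|g\cdot\cD f_K\|_{U^k},
\]
by the inductive hypothesis and the definition of the dual norm. Everything then reduces to establishing a \emph{multiplier} estimate of the form $\|g\cdot\cD f\|_{U^k}\le D\|g\|_{U^k}$ for some constant $D$ depending only on $k$. I would attack this by expanding both $\|\cdot\|_{U^k}^{2^k}$ and the definition of $\cD f$, producing a multi-parameter average; grouping the factors appropriately, the result is (an expectation over the inner shift parameters of $\cD f$ of) a Gowers inner product whose $2^k$ slots each contain $g$ multiplied by a product of shifts of $f$. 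A second application of CSG then isolates the $\|g\|_{U^k}$ factor, with the remaining $f$-dependent factors absorbed via the $L^\infty$-boundedness of $f$.

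The main obstacle is exactly this multiplier estimate, for the simple reason that $U^k$ is \emph{not} an algebra norm: the bound $\|gh\|_{U^k}\lesssim\|g\|_{U^k}$ fails for generic bounded $h$. What rescues the argument in the present setting is the special anti-uniform structure of $\cD f$ itself---its Gowers-style expression as an average over translates---which, when combined with the expansion of $\|g\cdot\cD f\|_{U^k}^{2^k}$, produces precisely the kind of $2^k$-dimensional parallelepiped structure to which CSG can be applied. In the bare $L^\infty$-bounded case the residual $f$-correlations are bounded by $1$ pointwise; in the Green--Tao setting where one only has $|f_i|\le\nu$ for a pseudorandom majorant $\nu$, one must additionally invoke the linear-forms condition on $\nu$ to show that the $O_K(1)$-parameter $\nu$-correlations arising are $1+o(1)$. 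The role of the word ``suitable'' in the statement is precisely to guarantee that every system of linear forms produced by this iteration lies within the regime controlled by the chosen hypothesis on $X$.
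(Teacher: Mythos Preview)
First, note that the paper does not actually prove this lemma: it is stated as a result of Green and Tao (from the proof of their Lemma~6.3) and simply quoted. So there is no ``paper's own proof'' to compare against; what follows compares your sketch with the Green--Tao argument the paper is invoking.

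Your base case $K=1$ is correct and is exactly the observation the paper makes just before Proposition~\ref{u3softinverse}. The gap is in the inductive step, specifically in your proposed proof of the multiplier estimate $\|g\cdot\cD f\|_{U^k}\le D\|g\|_{U^k}$. When you expand $\|g\cdot\cD f\|_{U^k}^{2^k}$ and freeze the inner shift parameters $\{a^\omega\}$, you obtain a Gowers inner product $\langle G_\omega\rangle$ with $G_\omega(y)=g(y)\cdot F_\omega(y)$, where $F_\omega(y)=\prod_{\eta\ne 0}C^{|\eta|}f(y+\eta\cdot a^\omega)$ is merely a bounded function. The Cauchy--Schwarz--Gowers inequality then yields $\prod_\omega\|g\cdot F_\omega\|_{U^k}$, and you are back where you started: $\|g\cdot(\text{bounded})\|_{U^k}$ is \emph{not} controlled by $\|g\|_{U^k}$ (take $g$ a random sign and $F_\omega=\overline g$). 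There is no ``second application of CSG'' that isolates $\|g\|_{U^k}$ from this expression, because $g$ sits at \emph{every} vertex of the cube, not just one. Put differently, your multiplier estimate is, by duality, the statement that $\cD f$ acts boundedly by multiplication on $(U^k)^*$ --- which is at least as strong as the lemma you are trying to prove, so the ``reduction'' is circular.

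The Green--Tao argument avoids induction on $K$ and works directly on $\langle g,\prod_{j=1}^K\cD f_j\rangle$. After expanding all $K$ dual functions one makes the change of variables $u^{(j)}=h^{(j)}-h^{(1)}$ to synchronise the shift parameters; for fixed $u^{(2)},\dots,u^{(K)}$ the remaining average over $x$ and $h:=h^{(1)}$ is a genuine $U^k$ Gowers inner product with $g$ at the single vertex $0$ and $G_\omega(y)=\prod_j f_j(y+\omega\cdot u^{(j)})$ at each vertex $\omega\ne 0$. One application of CSG then gives $\|g\|_{U^k}\prod_{\omega\ne 0}\|G_\omega\|_{U^k}$. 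In the bounded case $\|G_\omega\|_{U^k}\le 1$ trivially; in the pseudorandom case the factors $\|G_\omega\|_{U^k}^{2^k}$ are weighted correlations of $\nu$ along linear forms that can be repeated (when different $u^{(j)}$ have equal $\omega$-components), and this is precisely why Green and Tao require the \emph{correlation condition} rather than just the linear-forms condition.
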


\subsection{A generalization of the Green-Tao-Ziegler transference theorem.}\label{GTZ}

We shall be more concerned with the form of Lemma \ref{bauproducts}
than with the details of what $X$ is, since our aim is to describe
in an abstract way the important properties of the operator
$f\mapsto\cD f$. This we do in the next definition, which is
meant to capture the idea that the dual of a certain norm 
somewhat resembles an algebra norm.

\begin{definition}
Let $\|.\|$ be a norm on $\R^n$ such that $\|f\|_\infty\leq\|f\|^*$
for every $f\in\R^n$, and let $X$ be a bounded subset 
of $\R^n$. Then $\|.\|$ is a \emph{quasi algebra predual norm},
or \textit{QAP-norm}, with respect to $X$ if there is a (non-linear) 
operator $\cD:\R^n\ra\R^n$ a strictly decreasing function 
$c:\R_+\ra\R_+$, and an increasing function $C:\N\ra\R$ with the 
following properties:

(i) $\sp{f,\cD f}\leq 1$ for every $f\in X$;

(ii) $\sp{f,\cD f}\geq c(\e)$ for every $f\in X$ with 
$\|f\|\geq\e$;

(iii) $\|\cD f_1\dots\cD f_K\|^*\leq C(K)$ for any 
functions $f_1,\dots,f_K\in X$.
\end{definition}

It will help to explain the terminology if we introduce another norm,
which we shall call $\|.\|_{BAC}$. It is given by the formula
$\|f\|_{BAC}=\max\{|\sp{f,\cD g}|:g\in X\}$. The letters ``BAC'' stand
for ``basic anti-uniform correlation'' here. We shall call the functions 
$\cD f$
with $f\in X$ basic anti-uniform functions, and assume for convenience
that they span $\R^n$, so that $\|.\|_{BAC}$ really is a norm. Of
course, this norm depends on $X$ and $\cD$, but we are suppressing the
dependence in the notation.

By Lemma \ref{convexhulldual} the dual of the norm $\|.\|_{BAC}$ is 
given by the formula
\begin{equation*}
\|f\|_{BAC}^*=\inf\{\sum_{i=1}^k|\lambda_i|:f=\sum_{i=1}^k\lambda_i\cD f_i,
\ f_1,\dots,f_k\in X\}.
\end{equation*}
Thus, it measures the ease with which a function can be decomposed
into a linear combination of basic anti-uniform functions.
In terms of this norm, property (ii) above is telling us that if 
$f\in X$ and $\|f\|\geq\e$ then $\|f\|_{BAC}\geq c(\e)$. This
expresses a rough equivalence between the two norms, of a similar
kind to the rough equivalence between $\|f\|_{U^2}$ and $\|\hf\|_\infty$
when $\|f\|_\infty\leq 1$. It can also be thought of as a soft
inverse theorem; property (iii) then tells us that the functions
that we obtain from this inverse theorem have products that are
not too big.


Now let us briefly see why Theorem \ref{transference} generalizes
easily from preduals of algebra norms to QAP-norms. The following
result is not quite the result alluded to in the title of this
subsection, but it \textit{is} an abstract principle that can
be used as part of the proof the Green-Tao theorem. As we mentioned
in the introduction, the proof given here is much shorter and simpler
than the proof given by Green and Tao. (This is not quite trivial
to verify as they do not explicitly state the result, but the proof
here can be used to simplify Section 6 of their paper slightly, and
to replace Sections 7 and 8 completely.)

As a first step, we shall generalize Lemma \ref{polyapprox1}, the
simple result about polynomial approximations. The generalization is
equally straightforward: the main difference is merely that we need a
modificiation of the definition of the polynomial $R_P$.  Let us
suppose that $\|.\|$ is a QAP-norm and let $C:\N\ra\R$ be the function
given in property (iii) of that definition. If $P$ is the polynomial
$p(x)=a_nx^n+\dots+a_1x+a_0$, then we define $R_P'$ to be the
polynomial $C(n)|a_n|x^n+\dots+C(1)|a_1|x+|a_0|$: that is, we replace
the $k$th coefficient of $P$ by its absolute value and multiply it by
$C(k)$. If $J:\R\ra\R$ is a continuous function, and $C_1$, $C_2$ and
$\d$ are positive real numbers, we now define $\rho'(C_1,C_2,\d,J)$ to
be twice the infimum of $R_P'(C_2)$ over all polynomials $P$ such that
$|P(x)-J(x)|\leq\d$ for every $x\in[-C_1,C_1]$.

\begin{lemma}\label{polyapprox2}
Let $\|.\|$ be a QAP-norm, let $J:\R\ra\R$ be a continuous function
and let $C_1$, $C_2$ and $\d$ be positive real numbers, with
$C_1=C(1)C_2$. Then there exists a polynomial $P$ such that
$\|P\phi-J\phi\|_\infty\leq\d$ and
$\|P\phi\|^*\leq\rho'(C_1,C_2,\d,J)$ for every $\phi\in\R^n$ such that
$\|\phi\|_{BAC}^*\leq C_2$.
\end{lemma}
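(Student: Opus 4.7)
The plan is to imitate the proof of Lemma \ref{polyapprox1}, replacing each use of the submultiplicativity of the algebra norm with the bound (iii) on products of basic anti-uniform functions. The reason the factor $C(1)$ shows up in the relation $C_1=C(1)C_2$ is that we can no longer pass freely between the two norms $\|.\|^*$ and $\|.\|_{BAC}^*$; we need one application of (iii) (with $K=1$) just to conclude that a function with $\|\phi\|_{BAC}^*\le C_2$ has $\|\phi\|_\infty\le C_1$.

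First I would choose the polynomial. By the definition of $\rho'(C_1,C_2,\d,J)$ (and the factor of 2 in that definition, which gives us slack against the infimum), I can pick a polynomial $P(x)=a_nx^n+\dots+a_1x+a_0$ with $|P(x)-J(x)|\le\d$ for every $x\in[-C_1,C_1]$ and with $R_P'(C_2)\le\rho'(C_1,C_2,\d,J)$. Next I would show that any $\phi$ with $\|\phi\|_{BAC}^*\le C_2$ satisfies $\|\phi\|_\infty\le C_1$. Given $\eta>0$, Corollary \ref{convexhulldual} (applied to the set of basic anti-uniform functions) lets me write $\phi=\sum_i\lambda_i\cD f_i$ with $\sum_i|\lambda_i|\le C_2+\eta$ and each $f_i\in X$. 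Property (iii) of the QAP-norm (with $K=1$) gives $\|\cD f_i\|^*\le C(1)$, so by the triangle inequality $\|\phi\|^*\le(C_2+\eta)C(1)$; letting $\eta\to 0$ and invoking $\|\cdot\|_\infty\le\|\cdot\|^*$ gives $\|\phi\|_\infty\le C_1$. The first conclusion $\|P\phi-J\phi\|_\infty\le\d$ is then immediate from the uniform approximation $|P(x)-J(x)|\le\d$ on $[-C_1,C_1]$.

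For the bound $\|P\phi\|^*\le\rho'(C_1,C_2,\d,J)$, I would expand $P\phi=\sum_{k=0}^n a_k\phi^k$ and estimate each power separately. Using the same decomposition $\phi=\sum_i\lambda_i\cD f_i$ with $\sum_i|\lambda_i|\le C_2+\eta$, multilinearity gives
\begin{equation*}
\phi^k=\sum_{i_1,\dots,i_k}\lambda_{i_1}\cdots\lambda_{i_k}\,\cD f_{i_1}\cdots\cD f_{i_k},
\end{equation*}
and property (iii) of the QAP-norm bounds each term by $C(k)$ in $\|.\|^*$. The triangle inequality therefore yields
\begin{equation*}
\|\phi^k\|^*\le C(k)\Bigl(\sum_i|\lambda_i|\Bigr)^k\le C(k)(C_2+\eta)^k,
\end{equation*}
and letting $\eta\to 0$ gives $\|\phi^k\|^*\le C(k)C_2^k$. (The $k=0$ term $a_0\mathbf{1}$ contributes $|a_0|\|\mathbf{1}\|^*\le|a_0|$, matching the fact that the constant term in $R_P'$ is $|a_0|$.) Summing over $k$ and using the triangle inequality,
\begin{equation*}
\|P\phi\|^*\le\sum_{k=0}^n|a_k|\,\|\phi^k\|^*\le\sum_{k=0}^n C(k)|a_k|C_2^k=R_P'(C_2)\le\rho'(C_1,C_2,\d,J),
\end{equation*}
which is the second required bound.

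There is essentially no conceptual obstacle here; the proof is pure bookkeeping, and the main thing to get right is the order of the two limits (taking $\eta\to 0$ in the decomposition of $\phi$, and choosing $P$ close enough to the infimum defining $\rho'$). The one place to be careful is the implicit assertion $\|\mathbf{1}\|^*\le 1$ used at $k=0$: this is consistent with the convention built into the definition of $R_P'$ (no $C(0)$ factor), and should be treated either as a trivial consequence of the normalization of $\cD$ or as part of the standing setup.
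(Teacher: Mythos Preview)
Your proof is correct and follows essentially the same route as the paper's own argument: pick $P$ from the definition of $\rho'$, use property~(iii) with $K=1$ together with $\|\cdot\|_\infty\le\|\cdot\|^*$ to get $\|\phi\|_\infty\le C_1$, then expand $\phi^k$ multilinearly as a sum of $k$-fold products of basic anti-uniform functions and apply~(iii) to each term. Your remark about the constant term $a_0\mathbf{1}$ is a detail the paper leaves implicit.
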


\begin{proof}
It is immediate from the definition of $\rho'(C_1,C_2,\d,J)$ that for every
$C_1$, $C_2$ and $\d$ there exists a polynomial $P$ such that
$|P(x)-J(x)|\leq\d$ for every $x\in[-C_1,C_1]$, and such that
$R_P'(C_2)\leq\rho'(C_1,C_2,\d,J)$.

Next, observe that if $X$ is the set specified in the definition of
QAP-norms, and $f$ is a function in $X$, then $\|\cD f\|^*\leq C(1)$,
by property (iii). Therefore, if $\phi\in\R^n$ is a function with
$\|\phi\|_{BAC}^*\leq C_2$, it follows that $\|\phi\|^*\leq
C(1)C_2=C_1$.  Then $\|\phi\|_\infty\leq C_1$ as well, from the
definition of QAP-norms. Since $P$ and $J$ agree to within $\d$ on
$[-C_1,C_1]$, it follows that $\|P\phi-J\phi\|_\infty\leq\d$.

From the formula for $\|\phi\|_{BAC}^*$ and the fact that this is at
most $C_2$ it follows that for any $\e>0$ we can write $\phi$ as a
linear combination of basic anti-uniform functions, with the absolute
values of the coefficients adding up to at most $C_2+\e$. Therefore,
for any $\e>0$ we can write $\phi^m$ as a linear combination of
products of $m$ basic anti-uniform functions, with the absolute values
of the coefficients adding up to at most $C_2^m+\e$. Each of these
products has $\|.\|^*$-norm at most $C(m)$, by property (iii) of
QAP-norms. Hence, by the triangle inequality, $\|\phi^m\|^*\leq
C(m)C_2^m$.  More generally, if $P$ is the polynomial 
$P(x)=a_nx^n+\dots+a_1x+a_0$, then by the triangle inequality 
we obtain that 
\begin{eqnarray*}
\|P\phi\|^*&\leq& |a_n|\|\phi^n\|^*+\dots+|a_1|\|\phi\|^*+|a_0|\\
&\leq& C(n)|a_n|C_2^n+\dots+C(1)|a_1|C_2+|a_0|\\
&=&R_P'(C_2).\\
\end{eqnarray*}
As we remarked at the beginning of the proof, this is at most
$\rho'(C_1,C_2,\d,J)$, so the lemma is proved.
\end{proof}

\begin{theorem}\label{transference2} 
Let $\mu$ and $\nu$ be non-negative functions on $\{1,2,\dots,n\}$ 
such that $\|\mu\|_1$ and $\|\nu\|_1$ are both at most 1, and
let $\eta,\d>0$. Let $\|.\|$ be a QAP-norm on $\R^n$, with respect to
the set $X$ of all functions $f\in\R^n$ such that
$|f(x)|\leq\max\{\mu(x),\nu(x)\}$ for every $x$. Let $J:\R\ra\R$ be
the function given by $J(x)=(x+|x|)/2$ and let
$\e=\d/2\rho'(C(1)c(\eta)^{-1},c(\eta)^{-1},\d/4,J)$, where $\rho'$ is
defined as in the discussion just above. Suppose that
$\|\mu-\nu\|\leq\e$. Then for every function $f$ with $0\leq f\leq\mu$
there exists a function $g$ such that $0\leq g\leq\nu(1-\d)^{-1}$ and
$\|f-g\|\leq\eta$.
\end{theorem}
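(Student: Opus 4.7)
The plan is to follow the proof of Theorem \ref{transference} essentially verbatim, with Lemma \ref{polyapprox2} replacing Lemma \ref{polyapprox1}. The essential new difficulty is that Lemma \ref{polyapprox2} requires a bound on $\|\phi\|_{BAC}^*$ rather than on $\|\phi\|^*$, so the Hahn--Banach step has to be arranged to produce $\phi$ in the correct dual norm. The fix is to dualize against $\|.\|_{BAC}$ rather than $\|.\|$, using property (ii) of the QAP-norm definition---the soft inverse theorem, which says that for $h\in X$ the bound $\|h\|_{BAC}<c(\eta)$ forces $\|h\|<\eta$---as the bridge. A pointwise check shows that when $0\le f\le\mu$ and $0\le g\le\nu(1-\d)^{-1}$, the rescaled difference $(1-\d)(f-g)$ lies in $X$, so failure of the QAP-form of the conclusion implies failure of its BAC-form (with a harmless $(1-\d)$-factor that can be absorbed into the constants).

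Corollary \ref{hb1}, applied with $K_1=\{g:0\le g\le\nu(1-\d)^{-1}\}$ and coefficient $c_1=1$, and with $K_2$ the unit ball of $\|.\|_{BAC}$ and coefficient chosen so that $c_2^{-1}=c(\eta)^{-1}=:C_2$, then furnishes $\phi$ with $\sp{f,\phi}>1$, $\sp{\nu,\phi_+}\le 1-\d$ (the standard repackaging of the $K_1$-bound, since the maximiser of $\sp{g,\phi}$ over $K_1$ is $\nu(1-\d)^{-1}\mathbf{1}_{\phi\ge 0}$), and $\|\phi\|_{BAC}^*\le C_2$. Lemma \ref{polyapprox2}, invoked with this $C_2$, with $C_1=C(1)C_2$, $J(x)=(x+|x|)/2$, and tolerance $\d/4$, then supplies a polynomial $P$ satisfying $\|P\phi-\phi_+\|_\infty\le\d/4$ and $\|P\phi\|^*\le\rho'(C_1,C_2,\d/4,J)=\d/(2\e)$.

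The remainder is the inequality chase from the proof of Theorem \ref{transference}: one derives $\sp{\nu,P\phi}\le 1-3\d/4$ (via $\|\nu\|_1\le 1$), then $\sp{\mu,P\phi}\le 1-3\d/4+\e\rho'$ (via $\|\mu-\nu\|\le\e$), then $\sp{\mu,\phi_+}\le 1-\d/2+\e\rho'=1$ (via $\|\mu\|_1\le 1$), and finally $\sp{f,\phi}\le\sp{f,\phi_+}\le\sp{\mu,\phi_+}\le 1$ (via $0\le f\le\mu$ and $f\ge 0$), contradicting $\sp{f,\phi}>1$. The main obstacle is the bridge step in the first paragraph: converting ``no QAP-decomposition with $\|f-g\|\le\eta$'' into ``no BAC-decomposition with $\|f-g\|_{BAC}$ small enough'' in a way that gives Hahn--Banach a functional with the precise dual-norm constant $C_2=c(\eta)^{-1}$. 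Property (ii) supplies exactly this implication in the correct direction, but keeping the constants clean requires the mild $(1-\d)$-rescaling indicated above.
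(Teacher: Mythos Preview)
Your proof is correct and follows essentially the same route as the paper's: reduce to the $\|.\|_{BAC}$ norm via property (ii), apply Hahn--Banach to obtain $\phi$ with $\|\phi\|_{BAC}^*\le c(\eta)^{-1}$, invoke Lemma \ref{polyapprox2}, and run the identical inequality chain to a contradiction. The paper handles the bridge step more briskly, simply asserting that ``such an $h$ will belong to $X$'' without addressing the $(1-\d)^{-1}$ factor in the bound on $g$; your $(1-\d)$-rescaling is more careful on this minor point, but otherwise the two arguments coincide.
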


\begin{proof}
An equivalent way of stating the conclusion is that
$f=g+h$ with $0\leq g\leq\nu(1-\d)^{-1}$ and $\|h\|\leq\eta$. 
Since such an $h$ will belong to $X$, we know that a sufficient
condition for $\|h\|$ to be at most $\eta$ is that $\|h\|_{BAC}$
is at most $c(\eta)$ (since $c$ is strictly decreasing). Thus,
if the result is false, then we can find a functional
$\phi$ such that $\sp{f,\phi}>1$, but $\sp{g,\phi}\leq 1$ for
every $g$ such that $0\leq g\leq\nu(1-\d)^{-1}$, and 
$\|\phi\|_{BAC}^*\leq c(\eta)^{-1}$. For the rest of the proof,
we shall write $C_2$ for $c(\eta)^{-1}$.

As in the proof of Theorem \ref{transference}, the first condition on
$\phi$ is equivalent to the statement that $\sp{\nu,\phi_+}\leq 1-\d$,
and we still have that $\phi_+=J\phi$. Also, Lemma \ref{polyapprox2}
gives us a polynomial $P$ such that $\|P\phi-J\phi\|_\infty\leq\d/4$
and $\|P\phi\|^*\leq\rho'=\rho'(C(1)C_2,C_2,\d/4,J)$.

Since $\sp{\nu,\phi_+}\leq 1-\d$ and $\|\nu\|_1\leq 1$, it follows that 
$\sp{\nu,P\phi}\leq 1-3\d/4$. Since $\|P\phi\|^*\leq\rho'$ and 
$\|\mu-\nu\|\leq\e$, it follows that $\sp{\mu,P\phi}\leq 1-3\d/4+\e\rho'$.
Since $\|\mu\|_1\leq 1$, it follows that $\sp{\mu,\phi_+}\leq 1-\d/2+\e\rho'$.
Since $f\leq\mu$ it follows that $\sp{f,\phi_+}\leq 1-\d/2+\e\rho'$,
and since $f\geq 0$ it follows that $\sp{f,\phi}\leq 1-\d/2+\e\rho'$,
which is a contradiction.
\end{proof}

The abstract theorem stated and proved by Tao and Ziegler is both
more and less general than Theorem \ref{transference2}. It is less
general in that it takes $\nu$ to be the uniform probability measure
(and uses the letter $\nu$ instead of $\mu$, so that the two measures
are $\nu$ and $\mathbf{1}$). But in a small way it is more general:
they observe that we did not really need the full strength of the
assumptions we made.

\subsection{Arithmetic progressions in the primes.}

In this section we shall briefly describe how a special case of
Theorem \ref{transference2}, the second transference principle we
proved earlier in the paper, was used by Green and Tao to prove that
the primes contain arbitrarily long arithmetic progressions.

The main idea of their proof is an ingenious way of getting round the
difficulty that the primes less than $N$ do not form a dense subset of
$\{1,2,\dots,N\}$. This sparseness problem occurs in several places in
the literature, and there is a method by which one can sometimes deal
with it, which is to exploit the fact that one has a lot of control
over \textit{random} (or random-like) sets. In particular, there are
various results that assert that if $X$ is a sparse random-like set
and $Y$ is a subset of $X$ that is dense in $X$ (in the sense that
$|Y|/|X|$ is bounded below by a positive constant) then $Y$ behaves in
a way that is analogous to how a dense set would behave. That is,
sparse sets can be handled if you can embed them densely into
random-like sets.

Green and Tao reasoned that an approach like this might work for the
primes. There is a standard technicality to deal with first, which is
that the primes are much denser in some arithmetic progressions than
others. A moment's thought shows that this makes it impossible to
embed the primes from 1 to $N$ densely into a quasirandom
set. However, one can restrict to an arithmetic progression in which
the primes are particularly dense (by looking at primes that are
congruent to $a$ mod $m$, where $m$ is the product of the first few
primes and $a$ is coprime to $m$), in which this problem effectively
disappears.

To carry out their approach, they needed to do two things. First, they
had to prove that there was indeed a quasirandom set containing the
primes (inside a suitable arithmetic progression, but we'll use the
word ``primes" as a shorthand here) that was not much bigger than the
primes. If they could do that, then the general principle that
relatively dense subsets of quasirandom sets behave like dense sets
would suggest that the primes should behave like a dense set. Since
dense sets contain plenty of arithmetic progressions, so should the
primes. The second stage of their proof was to make this heuristic
argument rigorous.

As it turns out, they did not construct a quasirandom superset of the
primes, but an object that they called a \textit{pseudorandom
measure}. This was a non-negative function $\nu$ that did not have to
be 01-valued, but in other respects behaved like a superset of the
primes. (In fact, they normalized it to have average 1, but even then
it did not take just one non-zero value.) The construction of $\nu$
was based on very recent (at the time) results of Goldston and
Y{\i}ld{\i}r{\i}m \cite{goldyild}. This part of the proof belongs squarely in
analytic number theory and we shall say no more about it here.

The other part of the proof proceeded as follows. Let $\nu$ be a
pseudorandom measure: that is, a non-negative function defined on
$\{1,2,\dots,N\}$ such that $\|\nu\|_1=1$, which satisfied certain
quasirandom properties. (These properties were similar to, but
stronger than, the assertion that $\|\nu-\mathbf{1}\|_{U^k}$ was very
small.) Let us call a set $A$ \textit{dense relative to} $\nu$ if
there is a positive constant $\lambda$ such that $\lambda A\leq\nu$
and $\|\lambda A\|_1\geq c$ for some positive constant $c$ that does
not depend on $N$. Since $\|\nu-\mathbf{1}\|_{U^k}$ is small, the
transference principle of Theorem \ref{transference2} can be used to
replace the function $\lambda A$ by a function $f$ that takes values
in $[0,1]$ and has the property that $\|f-\lambda A\|_{U^k}$ is small,
provided, that is, that the hypotheses of Theorem \ref{transference2}
are satisfied.

The programme for completing the proof is therefore clear: one must
prove that the hypotheses are indeed satisfied, and one must prove
that the fact that $\|f-\lambda A\|_{U^k}$ is small allows us to
conclude that $A$ contains arithmetic progressions of length $k+2$ (as
one expects, since in other contexts the $U^k$ norm controls
progressions of this length).

Let us briefly recall what these hypotheses are. We define $X$ to be
the set of all functions that are bounded above in modulus by $\nu+1$,
and we would like the $U^k$ norm to be a QAP-norm with respect to
$X$. (These were defined at the beginning of Section \ref{GTZ}.) Not
surprisingly, as our non-linear operator $\cD$ we take the operator
defined just before Proposition \ref{u3softinverse} (for the
appropriate $k$), except that for convenience we multiply it by
$2^{-(k+1)}$.

The first hypothesis is that $\sp{f,\cD f}\leq 1$ for every $f\in
X$. It is straightforward to check from Green and Tao's definition of
pseudorandomness that $\|f\|_{U^k}$ is at most $2^k+o(1)$ for each
$f\in X$, and therefore this hypothesis is satisfied.

The second is that $\sp{f,\cD f}\geq c(\e)$ for every $f\in X$ with
$\|f\|_{U^k}\geq\e$.  But this is true because, with our definition of
$\cD$, $\sp{f,\cD f}=2^{-(k+1)}\|f\|_{U^k}$.  (We have essentially
given this argument already, in Proposition \ref{u3softinverse}.)

The third is that products of basic anti-uniform functions have
bounded $(U^k)^*$-norms.  This is a lemma of Green and Tao that we
stated as Lemma \ref{bauproducts}. It should be noted that to prove
this they required quasirandomness hypotheses on $\nu$ that are
stronger than one might expect: in particular they needed more
than just that $\nu$ should be close to $\mathbf{1}$ in some
$U^r$ norm. (The precise condition they needed is called
the \textit{correlation condition} in their paper.) It is 
not known whether there exists an $r$ such that their transference
theorem holds under the hypothesis that $\|\nu-\mathbf{1}\|_{U^r}$
is small.

The one remaining ingredient of their argument is what they call a
``generalized von Neumann theorem," in which they establish the fact
mentioned above, that if $\|f-\lambda A\|_{U^k}$ is small then $A$
contains arithmetic progressions of length $k+2$. More precisely,
\begin{equation*}
\lambda^{k+2}\E_{x,d}A(x)A(x+d)\dots A(x+(k+1)d)\approx
\E_{x,d}f(x)f(x+d)\dots f(x+(k+1)d).
\end{equation*}
If $A$ is a dense set, so that $\lambda$ is bounded above by a constant independent
of $N$, then this is a standard result, but it is quite a bit harder to prove when all one 
knows about $A$ is that $\lambda A$ is bounded above by a pseudorandom measure.




\section{Tao's structure theorem.}\label{taostruct}

In this section we shall combine some of the methods and results of
previous sections in order to obtain a general structure theorem for
bounded functions. This result resembles Proposition
\ref{strongdecomposition} in that we decompose a function $f$ as a sum
$f_1+f_2+f_3$ with $\|f_1\|^*\leq C$, $\|f_2\|\leq\eta(C)$ and
$\|f_3\|_2\leq\e$, but this time we shall assume that $f$ takes values
in an interval $[a,b]$ and deduce stronger properties of the functions
$f_i$: in particular, $f_1$ will also take values in the interval
$[a,b]$. In order to do this, we shall need to use polynomial
approximations.  It would be possible to prove a result about
QAP-norms, but the notation is simpler if we assume the stronger
hypothesis that the dual norm $\|.\|^*$ is an algebra norm. As we
shall see, this result is general enough to apply in many interesting
situations. 

Here, then, is the structure theorem we shall prove in this section.
Tao's structure theorem is essentially the same result, but for a
specific sequence of algebra norms. However, his method can easily
be modified to prove this more general formulation. (In other words, 
the point of this section is the method of proof rather than the
extra generality of the conclusion.) We should mention here that 
there are other results of a similar flavour to Tao's, which are
often referred to as ``arithmetic regularity lemmas''. The following
result can be thought of as an abstract arithmetic regularity lemma.

\begin{theorem}\label{taostructure}
 Let $\|.\|$ be a norm defined on $\R^n$, and suppose that the dual
norm $\|.\|^*$ is an algebra norm. Let $f\in\R^n$ be a function that
takes values in the interval $[a,b]$.  Let
$\eta:\mathbb{R}_+\rightarrow\mathbb{R}_+$ be a positive decreasing
function and let $\e>0$. Then there is a constant $C_0$, depending
on $\eta$ and $\e$ only, such that $f$ can be written
as a sum $f_1+f_2+f_3$, with $\|f_1\|^*\leq C_0$,
$\|f_2\|\leq\eta(\|f_1\|^*)$, and $\|f_3\|_2\leq\e$.  Moreover, $f_1$
and $f_1+f_3$ both take values in $[a,b]$.
\end{theorem}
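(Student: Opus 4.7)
The plan is to combine Proposition~\ref{strongdecomposition} with the polynomial approximation technique of Lemma~\ref{polyapprox1}, exactly in the paradigm established earlier in this section: Proposition~\ref{strongdecomposition} produces an unconstrained decomposition, and Lemma~\ref{polyapprox1} allows us to ``clip'' the structured part into $[a,b]$ without destroying the bound on $\|f_1\|^*$. Since $\|\cdot\|^*$ is an algebra norm, Lemma~\ref{algebrabasics} tells us that $\|g\|_\infty\leq\|g\|^*$ for every $g$, which is what makes polynomial approximation applicable to the structured part.

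First I would apply Proposition~\ref{strongdecomposition} to $f$ (suitably rescaled so $\|f\|_2\leq 1$) with tolerances $\e'$ and $\eta'$ chosen small enough to absorb the losses that the subsequent polynomial clipping will introduce. This gives a preliminary decomposition $f=g_1+g_2+g_3$ with $\|g_1\|^*\leq C$, $\|g_2\|\leq\eta'(C)$, and $\|g_3\|_2\leq\e'$; automatically $\|g_1\|_\infty\leq C$. I would then construct a smoothed clipping function $J_\delta:\R\to[a+\delta,b-\delta]$ that agrees with the identity on $[a+\delta,b-\delta]$, and apply Lemma~\ref{polyapprox1} to obtain a polynomial $P$ with $\|P\phi-J_\delta\phi\|_\infty\leq\delta$ and $\|P\phi\|^*\leq\rho(C,\delta,J_\delta)$ whenever $\|\phi\|^*\leq C$. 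Setting $f_1:=Pg_1$ then forces $f_1\in[a,b]$ and $\|f_1\|^*\leq C_0=\rho(C,\delta,J_\delta)$. Writing $J$ for the sharp clipping into $[a,b]$, the hypothesis $f\in[a,b]$ yields the key pointwise inequality $|g_1-Jg_1|\leq|g_2+g_3|$, since at any point where $g_1$ escapes $[a,b]$ it is $g_2+g_3$ that must pull $f$ back in.

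The last step is to split the residual $f-f_1=(g_1-Pg_1)+g_2+g_3$ into $f_2$ and $f_3$ meeting the required bounds. The natural choice is to group $g_2$ with the polynomial approximation error $Jg_1-Pg_1$ (which is $L_\infty$-small and hence $\|\cdot\|$-small by the algebra-norm duality $\|h\|\leq\|h\|_\infty\|\mathbf{1}\|^*$) into $f_2$, and to group $g_3$ with the clipping overshoot $g_1-Jg_1$ into $f_3$. Then $f_1+f_3=Jg_1+g_3$ lies within $L_2$-distance $\e'$ of $[a,b]$, and a final sharp clipping pushes it exactly into $[a,b]$, the tiny discrepancy being absorbed into $f_2$.

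The main obstacle will be bounding $\|f_3\|_2$. Since $\|g_1-Jg_1\|_2\leq\|g_2+g_3\|_2$ and $\|g_2\|_2$ is only bounded (by roughly $C+\max(|a|,|b|)$), this one-shot argument is insufficient on its own. I would remedy this by iterating the construction: apply the single-step procedure above to the residual $f-f_1^{(k)}$ at each stage, updating $f_1^{(k+1)}:=P\bigl(f_1^{(k)}+u_1^{(k)}\bigr)$, where $u_1^{(k)}$ is the new structured piece extracted at that stage. A standard energy-decrement argument shows that $\|f-f_1^{(k)}\|_2^2$ decreases by a definite amount whenever the extracted structured component is non-trivial (because the new $u_1^{(k)}$ has $|\sp{f-f_1^{(k)},u_1^{(k)}}|$ of the same order as $\|u_1^{(k)}\|_2^2$), so the process terminates in at most $O(\e^{-2})$ steps. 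The most delicate bookkeeping is controlling the accumulated bound on $\|f_1^{(k)}\|^*$ through the iteration and choosing the sequences $\e'_k,\eta'_k$ so that the final $C_0$ depends only on $\eta$ and $\e$.
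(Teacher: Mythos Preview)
Your overall plan---start with Proposition~\ref{strongdecomposition}, then polynomially clip the structured part---is exactly how the paper begins, and your pointwise observation $|g_1-Jg_1|\leq|g_2+g_3|$ is correct. But you have correctly located the real obstacle and then proposed the wrong fix for it. The paper does \emph{not} iterate. Instead it bounds $\|g_1-Pg_1\|_2$ in one shot by expanding
\[
\|g_1-Pg_1\|_2^2=\sp{g_1-Pg_1,\,f-Pg_1}-\sp{g_1-Pg_1,\,g_2}-\sp{g_1-Pg_1,\,g_3}.
\]
The first term is small by a sign analysis (where $g_1>b$ both factors are essentially nonnegative and nonpositive respectively, etc.); the third is $\leq\beta\|g_1-Pg_1\|_2$ by Cauchy--Schwarz; and the \emph{second} term---the one that was causing you trouble---is controlled via the duality pairing $|\sp{g_1-Pg_1,g_2}|\leq\|g_1-Pg_1\|^*\|g_2\|\leq(K+\rho)\theta(K)$, which can be made tiny since $\theta$ is at our disposal. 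This is the missing idea: use $\|g_2\|$, not $\|g_2\|_2$, and pair it against $\|g_1-Pg_1\|^*$, which is bounded because both $g_1$ and $Pg_1$ are. Your proposed energy-decrement iteration is underspecified (what is the energy, given that the residual always contains a large-$L_2$ quasirandom piece?) and unnecessary.

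There is a second, more serious gap: your one-line treatment of the condition $f_1+f_3\in[a,b]$. You propose to clip $Jg_1+g_3$ and ``absorb the tiny discrepancy into $f_2$'', but that discrepancy is only small in $L_2$, and moving an $L_2$-small function into $f_2$ can destroy the bound on $\|f_2\|$. The paper devotes most of its effort to this point: it rewrites $f_1+f_3\in[a,b]$ as $f-b\leq g_2\leq f-a$, clips $g_2$ on one side, and then---crucially---invokes the transference principle Theorem~\ref{transference} (with $\mu=(g_2)_+$, $\nu=(g_2)_-$, using $\|\mu-\nu\|=\|g_2\|\leq\theta(K)$) to rebuild the other side so that the new $g_2''$ still has small $\|\cdot\|$-norm. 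This is then repeated for the other inequality. You did not anticipate that Theorem~\ref{transference} is a key ingredient here.
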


The last condition may look slightly strange, but it is important in
applications. For instance, for Tao's application to Szemer\'edi's
theorem, $[a,b]$ is the interval $[0,1]$, and $f_1$ is the
``structured part'' of $f$. The key step in his argument is that $\E_x
f_1\geq\d$ implies that $\E_{x,d}f_1(x)f_1(x+d)\dots f_1(x+(k-1)d)\geq
c(\d)>0$, and more generally that the same is true of $f_1+f_3$: that
is, after a small $L_2$-perturbation of the function $f_1$. However,
$c(\d)$ is much smaller than $\d$; as a result, it is crucial that
both $f_1$ and $f_1+f_3$ should be positive, so that $c(\d)$ is not
swamped by a negative error term.

There is a simple way of making Theorem \ref{taostructure} more
general, and this is very important for some applications, including
Tao's application to Szemer\'edi's theorem. In order to explain the
generalization, it will be convenient to introduce another definition.

\begin{definition} Let $\|.\|$ and $|.|^*$ be two norms on $\R^n$
and let $c:(0,1]\rightarrow(0,1]$ be a strictly increasing function.
Then $|.|^*$ is an \emph{approximate dual} (at rate $c$) for $\|.\|$ 
if the following two conditions hold:

(i) $\sp{f,\phi}\leq\|f\||\phi|^*$ for any two functions $f$ and
$\phi$ in $\R^n$;

(ii) if $\|f\|_\infty\leq 1$ and $\|f\|\geq\e$ then there exists
$\phi\in\R^n$ such that $|\phi|^*\leq 1$ and $\sp{f,\phi}\geq c(\e)$.
(Equivalently, $|f|\geq c(\e)$, where $|.|$ is the predual of $|.|^*$.)
\end{definition}

The first of these estimates is equivalent to the assertion that
$\|\phi\|^*\leq|\phi|^*$ for every $\phi\in\R^n$. The second is
equivalent to the assertion that if $\|f\|_\infty\leq 1$, then
$|f|\geq c(\|f\|)$. Therefore, if a norm $\|.\|$ merely has an
\textit{approximate} dual $|.|^*$ that is an algebra norm, we can
apply Theorem \ref{taostructure} to the norm $|.|$ and conclude that
$|f_1|^*\leq C_0$, $\|f_2\|\leq c^{-1}(\eta(|f_1|^*))$ and
$\|f_3\|\leq\e$. Since $\eta$ can be chosen to tend to zero
arbitrarily fast, so can $c^{-1}\circ\eta$. Thus, Theorem
\ref{taostructure} has the following immediate corollary.

\begin{corollary}\label{taostructure2}
Let $\|.\|$ and $|.|$ be two norms on $\R^n$ and suppose that $|.|^*$
is an approximate dual for $\|.\|$. Let $f$ be a function that takes
values in an interval $[a,b]$. Let
$\eta:\mathbb{R}_+\rightarrow\mathbb{R}_+$ be a positive decreasing
function and let $\e>0$. Then there is a constant $C_0$, depending
only on $\eta$, $\e$ and the function $c$ that appears in the
specification of the approximate duality, such that $f$ can be written
as a sum $f_1+f_2+f_3$, with $|f_1|^*\leq C_0$,
$\|f_2\|\leq\eta(\|f_1\|^*)$, and $\|f_3\|_2\leq\e$.  Moreover, $f_1$
and $f_1+f_3$ both take values in $[a,b]$.
\end{corollary}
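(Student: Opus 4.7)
The plan is to apply Theorem \ref{taostructure} to the norm $|.|$ (whose dual $|.|^*$ is, by hypothesis, an algebra norm) with a carefully chosen rate function $\eta'$ in place of $\eta$, and then convert the resulting $|.|$-control of $f_2$ into $\|.\|$-control using condition (ii) of the approximate-duality hypothesis.

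Set $M=b-a$, and, by replacing $\eta$ with $\min(\eta,M)$ if necessary (which only strengthens the conclusion we are aiming for), assume $\eta(C)\le M$ for every $C$. Define
\[
\eta'(C) \;=\; M\,c\!\bigl(\eta(C)/M\bigr).
\]
Since $\eta$ is decreasing and $c:(0,1]\to(0,1]$ is strictly increasing, $\eta'$ is again a positive decreasing function. Applying Theorem \ref{taostructure} to the norm $|.|$ with this $\eta'$ and the given $\e$ yields a constant $C_0$ --- depending only on $\eta'$ and $\e$, and hence only on $\eta$, $\e$, $c$ and $b-a$ --- together with a decomposition $f=f_1+f_2+f_3$ such that $|f_1|^*\le C_0$, $|f_2|\le\eta'(|f_1|^*)$, $\|f_3\|_2\le\e$, and both $f_1$ and $f_1+f_3$ take values in $[a,b]$.

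It remains to convert the bound on $|f_2|$ into a bound on $\|f_2\|$. Because $f$ and $f_1+f_3$ both take values in $[a,b]$, their difference $f_2=f-(f_1+f_3)$ satisfies $\|f_2\|_\infty\le M$. The contrapositive of condition (ii) in the definition of approximate duality, applied to the normalized function $f_2/M$, says that if $|f_2/M|\le c(\delta)$ then $\|f_2/M\|\le\delta$; taking $\delta=\eta(|f_1|^*)/M$ and using the bound $|f_2|/M\le c(\eta(|f_1|^*)/M)$ obtained in the previous step gives $\|f_2\|\le\eta(|f_1|^*)$. Finally, condition (i) of approximate duality implies $\|\phi\|^*\le|\phi|^*$ for every $\phi\in\R^n$ (take the supremum of $\sp{f,\phi}/\|f\|$ over nonzero $f$), so $\|f_1\|^*\le|f_1|^*\le C_0$, and since $\eta$ is decreasing we conclude $\|f_2\|\le\eta(|f_1|^*)\le\eta(\|f_1\|^*)$, as required.

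The main obstacle is really just bookkeeping: choosing $\eta'$ so that the algebra-norm-predual theorem applied to $|.|$ delivers exactly the $\|.\|$-bound on $f_2$ that the corollary demands, and checking that the normalization by $M$ is harmlessly absorbed into the constant. No substantive new idea beyond Theorem \ref{taostructure} is needed, which is why the author describes this corollary as immediate.
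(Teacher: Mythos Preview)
Your proof is correct and follows essentially the same approach as the paper: apply Theorem \ref{taostructure} to the norm $|.|$ with a suitably accelerated rate function, then use condition (ii) of approximate duality to convert the $|.|$-bound on $f_2$ into a $\|.\|$-bound, and condition (i) to pass from $|f_1|^*$ to $\|f_1\|^*$. You are in fact more careful than the paper's sketch, which writes $\|f_2\|\le c^{-1}(\eta(|f_1|^*))$ without addressing the normalization needed to ensure $\|f_2\|_\infty\le 1$ before invoking condition (ii); your division by $M=b-a$ handles this cleanly.
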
 

\subsection{A proof of the structure theorem.}

The proof we shall give in this paper is quite different from that of
Tao. The main idea is to start with a decomposition obtained using
Proposition \ref{strongdecomposition} (which was an easy consequence
of the Hahn-Banach theorem) and to adjust it until the functions $f_1$
and $f_1+f_3$ have the right ranges. During the process of adjustment,
we shall have cause to use Theorem \ref{transference}, the first of
the transference theorems obtained in the previous section. The proof
is conceptually very simple, but it involves a longish sequence of
small calculations to check that the errors that we introduce when we
adjust our decomposition are small.

To begin with, then, let $\theta$ be a decreasing positive function
and $\b$ a positive constant, both to be specified later, and apply
Proposition \ref{strongdecomposition} to write $f$ as $f_1+f_2+f_3$
with $\|f_1\|^*=K$, $\|f_2\|\leq\theta(K)$ and $\|f_3\|\leq\b$. 
Here, $K$ is bounded above by a function of $\theta$ and $\b$, 
so later we shall need $\theta$ and $\b$ to depend only on $\eta$
and $\e$.

We would now like to modify $f_1$ so that it takes values in the
interval $[a,b]$. The obvious way of doing this is to apply Lemma
\ref{polyapprox1} with the continuous function $J$ that takes the
value $a$ when $x<a$, $b$ when $x>b$ and $x$ when $x\in [a,b]$.  This
gives us a new function $Pf_1$ such that $\|Pf_1-Jf_1\|_\infty\leq\d$ and
$\|Pf_1\|^*\leq\rho=\rho(K,\d,J)$. The first inequality implies that
$Pf_1$ takes values in $[a-\d,b+\d]$, and a small adjustment will
correct that to $[a,b]$. However, before we do the adjustment, let us
check that $f_1-Pf_1$ is small in an appropriate sense. Intuitively,
this is plausible: it should not be possible for the structured part
of $f$ to stray too far from the interval $[a,b]$ for too long.  This
intuition turns out to be correct, and proving it rigorously is not
very hard.

\begin{lemma}
Let $f_1$ and $Pf_1$ be the functions just defined. Then provided that
the function $\theta$ is sufficiently small (in terms of $a$, $b$ and
$\b$), we have the inequality $\|f_1-Pf_1\|_2\leq 3\b/2$.
\end{lemma}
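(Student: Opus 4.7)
The plan is to bound $\|f_1-Pf_1\|_2$ by passing through the intermediate function $Jf_1$, exploiting that $J$ is the nearest-point projection onto $[a,b]$ and that $f$ already takes values in $[a,b]$. By the triangle inequality,
\begin{equation*}
\|f_1-Pf_1\|_2 \;\leq\; \|f_1-Jf_1\|_2 + \|Jf_1-Pf_1\|_2 \;\leq\; \|f_1-Jf_1\|_2 + \delta,
\end{equation*}
so I would first impose $\delta\leq\beta/4$ and reduce to showing $\|f_1-Jf_1\|_2\leq 5\beta/4$.

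The key geometric input is the pointwise inequality $(f_1(x)-Jf_1(x))(c-Jf_1(x))\leq 0$ for every $c\in[a,b]$, since $Jf_1(x)$ is the point of $[a,b]$ closest to $f_1(x)$. Applying this with $c=f(x)\in[a,b]$ and averaging gives $\sp{f_1-Jf_1,\,f-Jf_1}\leq 0$. Splitting $f_1-Jf_1=(f_1-f)+(f-Jf_1)$ inside $\|f_1-Jf_1\|_2^2=\sp{f_1-Jf_1,\,f_1-Jf_1}$ and using $f-f_1=f_2+f_3$ then yields
\begin{equation*}
\|f_1-Jf_1\|_2^2 \;\leq\; |\sp{f_1-Jf_1,\,f_2}| + |\sp{f_1-Jf_1,\,f_3}|.
\end{equation*}
The $f_3$ term is at most $\beta\|f_1-Jf_1\|_2$ by Cauchy--Schwarz.

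The main obstacle is the $f_2$ term, because we have no dimension-independent bound on $\|f_1-Jf_1\|^*$ itself. The trick is to observe that $f_1-Jf_1=G(f_1)$ with $G(t)=t-J(t)$ continuous on $[-K,K]$, and to apply Lemma \ref{polyapprox1} to $G$: this yields a polynomial $P_1$ with $\|P_1f_1-(f_1-Jf_1)\|_\infty\leq\delta_1$ and $\|P_1f_1\|^*\leq\rho(K,\delta_1,G)$. Splitting
\begin{equation*}
\sp{f_1-Jf_1,\,f_2} \;=\; \sp{P_1f_1,\,f_2} \;+\; \sp{(f_1-Jf_1)-P_1f_1,\,f_2}
\end{equation*}
then bounds the first summand by $\rho(K,\delta_1,G)\,\theta(K)$ via the duality between $\|\cdot\|$ and $\|\cdot\|^*$, and the second by $\delta_1\|f_2\|_1$. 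For the latter I would use $\|f_2\|_1\leq\|f\|_\infty+\|f_1\|_\infty+\|f_3\|_1\leq\max(|a|,|b|)+K+\beta$, with $\|f_1\|_\infty\leq K$ coming from Lemma \ref{algebrabasics} and $\|f_3\|_1\leq\beta$ from Cauchy--Schwarz.

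Putting everything together, $x=\|f_1-Jf_1\|_2$ satisfies $x^2\leq\beta x+D$ where
\begin{equation*}
D \;=\; \rho(K,\delta_1,G)\,\theta(K) \;+\; \delta_1\bigl(\max(|a|,|b|)+K+\beta\bigr),
\end{equation*}
so that $x\leq 5\beta/4$ as soon as $D\leq 5\beta^2/16$. I would achieve this by first picking $\delta_1$ so small that $\delta_1(\max(|a|,|b|)+K+\beta)\leq 5\beta^2/32$ and then requiring $\theta(K)\leq 5\beta^2/\bigl(32\rho(K,\delta_1,G)\bigr)$. Since $\theta$ is a free decreasing function, we may build this $K$-dependence into its definition, so that $\theta$ as a function of its argument depends only on $a$, $b$ and $\beta$, exactly as the lemma demands.
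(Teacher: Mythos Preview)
Your proof is correct and follows essentially the same strategy as the paper: expand the squared $L_2$-norm as inner products against $f-(\text{projection})$, $f_2$, and $f_3$, and bound each piece. The paper works with $Pf_1$ throughout rather than $Jf_1$; your choice of $Jf_1$ makes the geometric step cleaner (the projection inequality $\sp{f_1-Jf_1,\,f-Jf_1}\leq 0$ replaces the paper's three-case pointwise analysis of $\sp{f_1-Pf_1,\,f-Pf_1}$), but it costs you a second polynomial approximation to control $|\sp{f_1-Jf_1,\,f_2}|$. The paper sidesteps this entirely by observing that $\|f_1-Pf_1\|^*\leq\|f_1\|^*+\|Pf_1\|^*\leq K+\rho$, so that $|\sp{f_1-Pf_1,\,f_2}|\leq(K+\rho)\theta(K)$ directly --- in effect, your auxiliary polynomial $P_1$ can simply be taken to be $\mathrm{id}-P$, which collapses your extra step into theirs.
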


\begin{proof}
From the decomposition $f=f_1+f_2+f_3$ we obtain the decomposition
\begin{equation*}
f_1-Pf_1=(f-Pf_1)-f_2-f_3
\end{equation*}
We shall now bound $\|f_1-Pf_1\|_2^2$ by looking at the inner products
of $f_1-Pf_1$ with each of the three terms on the right-hand side.

First of all, if $f_1(x)>b$ then $Jf_1(x)=b$, so $|Pf_1(x)-b|\leq\d$,
and therefore $f_1(x)-Pf_1(x)\geq -\d$. Since $f$ takes values in
$[a,b]$, we also find that $f(x)-Pf_1(x)\leq\d$. Similarly, if
$f_1(x)<a$ then we find that $f_1(x)-Pf_1(x)\leq\d$ and
$f(x)-Pf_1(x)\geq-\d$. If $f_1(x)\in[a,b]$, then $f_1(x)=Jf_1(x)$, so
$|f_1(x)-Pf_1(x)|\leq\d$, and $|f(x)-Pf_1(x)|\leq (b-a+2\d)$. It
follows from these three estimates that $\sp{f_1-Pf_1,f-Pf_1}\leq
4\d^2+\d(b-a)$.

Since $\|f_1\|^*\leq K$, $\|Pf_1\|^*\leq\rho$, and $\|f_2\|\leq\theta(K)$, 
it follows that 
\begin{equation*}
|\sp{f_1-Pf_1,f_2}|\leq (K+\rho)\theta(K).
\end{equation*}

For the third inner product we use Cauchy-Schwarz to give a trivial 
implicit estimate: 
\begin{equation*}
|\sp{f_1-Pf_1,f_3}|\leq\b\|f_1-Pf_1\|_2.
\end{equation*}

From the estimates for these inner products it follows that
\begin{equation*}
\|f_1-Pf_1\|_2^2\leq 4\d^2+\d(b-a)+(K+\rho)\theta(K)
+\b\|f_1-Pf_1\|_2.
\end{equation*}
Therefore, if we choose $\d$ such that $4\d^2+\d(b-a)\leq\b^2/4$ and
$\theta$ in such a way that 
$(K+\rho(K,\d,J))\theta(K)\leq\b^2/2$ for every $K$, then 
\begin{equation*}
\|f_1-Pf_1\|_2^2\leq \b^2/4+\b^2/2+\b\|f_1-Pf_1\|_2,
\end{equation*}
from which it follows, on completing the square, that 
$\|f_1-Pf_1\|_2\leq 3\b/2$, as claimed. To complete the proof, note 
that the condition on $\theta$ depends on $\rho$, and hence on 
$\d$, and $\d$ depends on $a$, $b$ and $\b$.
\end{proof}


The next step is very simple. Let $L$ be the linear function that
takes $a-\d$ to $a$ and $b+\d$ to $b$. Then $|L(x)-x|$ is at most
$\d$ for every $x$ in the interval $[a-\d,b+\d]$. Since $f_1$ takes
values in this interval, it follows that $\|LPf_1-Pf_1\|_\infty\leq\d$.
Also, if we write $L(x)=\lambda x+\mu$, it is easy to see that 
$0<\lambda<1$, from which it follows that $\|LPf_1\|^*\leq \|Pf_1\|^*+\mu$
(since $|\mathbf{1}|^*=1$). A small calculation shows that 
$\mu=-(a+b)\d/(a-b-2\d)$, so $\|LPf_1\|^*\leq 2\rho$,
provided $\d$ is moderately small (depending on $a$ and $b$).

Let us now see where we have reached. We started with a decomposition
$f=f_1+f_2+f_3$, and we have now modified $f_1$, first to $Pf_1$ and
then to $LPf_1$. The first modification incurred an extra error of
$L_2$-norm at most $3\b/2$, and the second an extra error of
$L_\infty$-norm, and hence $L_2$-norm, at most $\d$. If we assume that
$\d\leq\b/2$ then we find that we have a decomposition
$f=g_1+g_2+g_3$, where $g_1=LPf_1$, $g_2=f_2$, and
$g_3=f_3+(f_1-LPf_1)$.  We have shown that $\|g_1\|^*\leq 2\rho$, that
$\|g_2\|=\|f_2\|\leq\theta(K)$, and that
$\|g_3\|_2\leq\b+3\b/2+\b/2=3\b$.  Moreover, $g_1$ takes values in the
interval $[a,b]$.

This gives us most of what we want (if we choose $\b$ and $\theta$
appropriately). The main thing we are missing is any information
about the range of $g_1+g_3$. In order to obtain the extra property
that $g_1+g_3$ takes values in $[a,b]$, we shall focus on the
equivalent problem of ensuring that $f(x)-b\leq g_2(x)\leq f(x)-a$ for
every $x$.

Note that $f(x)-b\leq 0$ and $f(x)-a\geq 0$ for every $x$. Our strategy 
for obtaining these bounds on $g_2$ is even simpler than our strategy
for adjusting $f_1$ earlier: we shall replace $g_2(x)$ by $f(x)-a$
whenever $g_2(x)>f(x)-a$, and similarly on the other side. However,
if that is all we do then we lose all information about $\|g_2\|$.
This is where Theorem \ref{transference}, our first transference theorem,
comes in: when we adjust the positive part of $g_2$ we can use 
Theorem \ref{transference} to make a complementary adjustment to the
negative part, and vice versa. 

Let us therefore set $g_2'(x)$ to be $\min\{g_2(x),f(x)-a\}$ for each $x$. 
First we need a simple lemma.

\begin{lemma}\label{estimate1}
If $g_2'=\min\{g_2,f-a\}$, then $\|g_2-g_2'\|_2\leq 3\b.$
\end{lemma}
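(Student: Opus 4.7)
The plan is to argue pointwise that $|g_2 - g_2'|$ is dominated by $|g_3|$, and then invoke the $L_2$-bound $\|g_3\|_2 \leq 3\beta$ established in the preceding paragraph.

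First I would observe that, by the definition of $g_2'$ as a pointwise minimum, one has
\begin{equation*}
g_2 - g_2' = (g_2 - (f-a))_+ = \max\{g_2 - (f-a),\, 0\}.
\end{equation*}
Next, using the decomposition $f = g_1 + g_2 + g_3$, I would rewrite the quantity inside the positive part as
\begin{equation*}
g_2 - (f-a) = a - g_1 - g_3.
\end{equation*}
The key observation is that $g_1$ takes values in $[a,b]$, so $a - g_1 \leq 0$ pointwise, which gives
\begin{equation*}
g_2 - (f-a) \leq -g_3.
\end{equation*}
Taking positive parts of both sides yields $|g_2 - g_2'| = (g_2 - (f-a))_+ \leq (-g_3)_+ \leq |g_3|$ at every point.

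Finally I would square and take expectations, concluding $\|g_2 - g_2'\|_2 \leq \|g_3\|_2 \leq 3\beta$, where the last inequality is the bound on $\|g_3\|_2$ obtained a few lines above in the construction of the modified decomposition. There is no real obstacle here: the only ingredient one must remember to use is that $g_1$ actually takes values in $[a,b]$ (a property established by the polynomial-approximation step together with the linear rescaling $L$), which is what forces $a - g_1 \leq 0$ and hence makes the pointwise domination by $|g_3|$ work. The proof is essentially a two-line pointwise comparison followed by monotonicity of the $L_2$-norm.
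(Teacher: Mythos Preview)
Your proof is correct and is essentially identical to the paper's: both argue pointwise that $0\leq g_2-g_2'=a-g_1-g_3\leq -g_3$ (using $g_1\in[a,b]$) and then bound $\|g_2-g_2'\|_2$ by $\|g_3\|_2\leq 3\beta$. The only cosmetic difference is that you package the case split via the positive-part notation $(g_2-(f-a))_+$, whereas the paper writes out the two cases explicitly.
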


\begin{proof} 
For every $x$, either $g_2(x)-g_2'(x)=0$ or 
\begin{equation*}
0\leq g_2(x)-g_2'(x)=g_2(x)-f(x)+a=a-g_1(x)-g_3(x)\leq -g_3(x),
\end{equation*}
where the last inequality follows from the fact that $g_1(x)\in[a,b]$
for every $x$. It follows that $\|g_2-g_2'\|_2\leq\|g_3\|_2$, which we
have established to be at most $3\b$.
\end{proof}

Our first attempt at adjusting the decomposition is to write
\begin{equation*}
f=g_1+g_2'+(g_3+g_2-g_2').
\end{equation*}
Our main problem now is that we do not have a good estimate for
$\|g_2'\|$. To deal with this, we shall adjust the negative part of
$g_2$ as well, using Theorem \ref{transference}. Let $\mu=(g_2)_+$
and let $\nu=(g_2)_-$. Then $\mu$ and $\nu$ are disjointly supported,
so both $\|\mu\|_1$ and $\|\nu\|_1$ are at most $\|g_2\|_1$, which
is at most $\|g_2\|_2$. Since $g_2=(f-g_1)+g_3$ and $f$ and $g_1$ take
values in $[a,b]$, $\|g_2\|\leq|b-a|+3\b$, by the triangle inequality
and our estimate for $\|g_3\|_2$. Let $\a=|b-a|+3\b$.

We now apply Theorem \ref{transference} with $\mu$ and $\nu$ as above
and with $f=(g_2')_+$. Strictly speaking, this is not quite accurate,
since the upper bounds for $\|\mu\|_1$ and $\|\nu\|_1$ are $\a$ rather
than 1, but we can look at the functions $\a^{-1}\mu$, $\a^{-1}\nu$
and $\a^{-1}f$ instead. The main hypothesis we have is that
$\|g_2\|=\|\mu-\nu\|\leq\theta(K)$, so we can take $\e$ to be
$\a^{-1}\theta(K)$ in Theorem \ref{transference}. If $\tau>0$ is a
constant such that $\a^{-1}\theta(K)=\d/2\rho(\a\tau^{-1},\b/4,J)$
(where now $J(x)$ is the function $(x+|x|)/2$), then we may conclude
that there is a function $g$ such that $0\leq g\leq\nu(1-\b)^{-1}$ and
$\|f-g\|\leq\tau$. The important thing to note here is that $\tau$
tends to zero as $\theta(K)$ tends to zero.

Define $g_2''$ to be $f-g$. This gives us a decomposition 
\begin{equation*}
f=g_1+g_2''+[g_3+(g_2-g_2')+(g_2'-g_2'')].
\end{equation*}
We have the upper bounds $g_2''(x)\leq f(x)-a$ for every $x$, 
and $\|g_2''\|\leq\tau$. However, we have not yet checked that 
$\|g_2'-g_2''\|_2$ is small. For this we need another simple lemma.

\begin{lemma}\label{estimate2}
Let $\nu\in\R^n$ be a non-negative function and suppose that $\nu$ 
can be written as a sum $\nu_1+\nu_2$, where $\|\nu_1\|_\infty\leq\a$
and $\|\nu_2\|_2\leq\g$. Then $\|h\|_2\leq\g+(\a\|h\|_1)^{1/2}$ for 
any function $h$ with $0\leq h\leq\nu$.
\end{lemma}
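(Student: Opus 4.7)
The plan is to bound $\|h\|_2^2$ by an inner product and then split that inner product according to the decomposition $\nu = \nu_1 + \nu_2$. Concretely, I would first note that, since $0 \leq h \leq \nu$ pointwise, we have $h(x)^2 \leq h(x)\nu(x)$ for every $x$, which gives the key inequality $\|h\|_2^2 \leq \sp{h,\nu}$ after taking expectations.

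Next I would split $\sp{h,\nu} = \sp{h,\nu_1} + \sp{h,\nu_2}$ and estimate each term. For the first term, using that $h \geq 0$ and $\|\nu_1\|_\infty \leq \alpha$, we get $\sp{h,\nu_1} = \E_x h(x)\nu_1(x) \leq \alpha \E_x h(x) = \alpha\|h\|_1$. For the second term, the Cauchy-Schwarz inequality gives $\sp{h,\nu_2} \leq \|h\|_2\|\nu_2\|_2 \leq \gamma\|h\|_2$. Combining these yields the quadratic inequality
\begin{equation*}
\|h\|_2^2 \leq \alpha\|h\|_1 + \gamma\|h\|_2.
\end{equation*}

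Finally, setting $t = \|h\|_2$, the inequality $t^2 - \gamma t - \alpha\|h\|_1 \leq 0$ gives $t \leq \tfrac{1}{2}\bigl(\gamma + \sqrt{\gamma^2 + 4\alpha\|h\|_1}\bigr)$, and using $\sqrt{\gamma^2 + 4\alpha\|h\|_1} \leq \gamma + 2\sqrt{\alpha\|h\|_1}$ immediately yields $\|h\|_2 \leq \gamma + (\alpha\|h\|_1)^{1/2}$, as required.

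There is no real obstacle here; the argument is a routine combination of the pointwise bound $h^2 \leq h\nu$ with Hölder/Cauchy--Schwarz applied to the two pieces of $\nu$, followed by solving a quadratic. The only mild subtlety is that $\nu_2$ is not assumed to be non-negative, but this is irrelevant since we only use its $L_2$-norm via Cauchy--Schwarz.
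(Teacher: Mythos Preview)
Your proof is correct and is essentially identical to the paper's own argument: the paper also bounds $\|h\|_2^2\leq\sp{h,\nu_1+\nu_2}\leq\a\|h\|_1+\g\|h\|_2$ using positivity and Cauchy--Schwarz, and then remarks that the stated bound is an easy consequence. You have simply written out the final quadratic-inequality step in more detail.
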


\begin{proof}
By the positivity of $h$ and $\nu$, 
\begin{equation*}
\|h\|_2^2\leq\sp{h,\nu_1+\nu_2}\leq\a\|h\|_1+\g\|h\|_2.
\end{equation*}
The bound stated is an easy consequence of this.
\end{proof}

\begin{corollary}\label{estimate3}
Let $g''$ be any function such that $g''(x)=g_2'(x)$ when $g_2'(x)$
is non-negative, and $0\geq g''(x)\geq g_2'(x)$ otherwise. Suppose
also that $\|g_2\|\leq\tau$. Then 
$\|g''-g_2'\|_2\leq 3\b+(\a(\tau+3\b))^{1/2}$.
\end{corollary}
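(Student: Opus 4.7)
The plan is to set $h := g'' - g_2'$ and apply Lemma \ref{estimate2}, with dominating function $\nu := (g_2)_-$.

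First I would verify the pointwise bound $0 \leq h \leq \nu$. Where $g_2'(x) \geq 0$, the first hypothesis on $g''$ gives $g''(x) = g_2'(x)$, hence $h(x) = 0$. Where $g_2'(x) < 0$, the identity $g_2' = \min\{g_2, f - a\}$ together with $f \geq a$ forces $g_2(x) = g_2'(x) < 0$, so $\nu(x) = -g_2'(x)$; the second hypothesis $g_2'(x) \leq g''(x) \leq 0$ then yields $0 \leq g''(x) - g_2'(x) \leq -g_2'(x) = \nu(x)$.

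Next I would exhibit a decomposition $\nu = \nu_1 + \nu_2$ of the form required by Lemma \ref{estimate2}. From the decomposition $g_2 = (f - g_1) + g_3$ furnished by the surrounding proof, together with $f, g_1 \in [a, b]$, one has $|g_2| \leq (b - a) + |g_3|$ pointwise, so in particular $\nu \leq (b - a) + |g_3|$. Setting $\nu_1 := \min\{\nu, b - a\}$ and $\nu_2 := \nu - \nu_1 \leq |g_3|$, we get $\|\nu_1\|_\infty \leq b - a \leq \a$ and $\|\nu_2\|_2 \leq \|g_3\|_2 \leq 3\b$. Lemma \ref{estimate2} then gives
\[
\|h\|_2 \;\leq\; 3\b + (\a \|h\|_1)^{1/2}.
\]

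The remaining task is the bound $\|h\|_1 \leq \tau + 3\b$, which I expect to be the main technical obstacle. The plan is the triangle inequality $\|h\|_1 \leq \|g'' - g_2\|_1 + \|g_2 - g_2'\|_1$. The second term is at most $\|g_2 - g_2'\|_2 \leq 3\b$ by Cauchy--Schwarz (giving $\|\cdot\|_1 \leq \|\cdot\|_2$ for our expectation-normalized $L^p$-norms) combined with Lemma \ref{estimate1}. For the first term, observe that $g'' - g_2$ is sign-constant on each of $\{g_2 \geq 0\}$ and $\{g_2 < 0\}$---non-positive on the former (where $g'' = g_2' \leq g_2$) and non-negative on the latter (where $g_2' = g_2 \leq g''$), as one checks from the sign hypotheses on $g''$ and the definition of $g_2'$. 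Hence the hypothesis $\|g_2\| \leq \tau$, interpreted in the surrounding proof as a $\|.\|$-bound of order $\tau$ on the difference between the transference-adjusted function $g''$ and $g_2$, can be converted into an $L^1$-bound of $\tau$ by applying Lemma \ref{algebrabasics}'s inequality $\|\phi\| \geq |\E_x \phi(x)|$ to each sign-definite piece separately. Substituting $\|h\|_1 \leq \tau + 3\b$ into the display then yields $\|h\|_2 \leq 3\b + (\a(\tau + 3\b))^{1/2}$, as claimed.
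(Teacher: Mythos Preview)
Your steps up to and including the application of Lemma \ref{estimate2} are correct and match the paper exactly: you verify $0\le h\le\nu$ with $\nu=(g_2)_-$, you decompose $\nu$ using $g_2=(f-g_1)+g_3$, and you obtain $\|h\|_2\le 3\b+(\a\|h\|_1)^{1/2}$.

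The gap is in your bound on $\|h\|_1$. Two things go wrong. First, the hypothesis ``$\|g_2\|\le\tau$'' is not a bound on $\|g''-g_2\|$, and you are not entitled to reinterpret it as one; in the paper's own argument the $\tau$-bound is on $\|g''\|$ itself (the corollary is applied with $g''=g_2''=f-g$ from the transference step, and $\|f-g\|\le\tau$), so the stated hypothesis is almost certainly a typo for $\|g''\|\le\tau$. Second, even granting a $\|\cdot\|$-bound on $g''-g_2$, the move ``apply $\|\phi\|\ge|\E_x\phi|$ to each sign-definite piece separately'' does not yield an $L^1$ bound: Lemma \ref{algebrabasics} requires a $\|\cdot\|$-bound on the function you feed it, and you have no such bound on the restrictions of $g''-g_2$ to $\{g_2\ge 0\}$ and $\{g_2<0\}$, only on the whole function. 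What the lemma gives you is $|\E_x(g''-g_2)|$, not $\E_x|g''-g_2|$.

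The paper avoids the detour through $\|g''-g_2\|_1$ altogether. Since $h\ge 0$ one has $\|h\|_1=\E_x h=\E_x g''-\E_x g_2'$. From the (corrected) hypothesis and Lemma \ref{algebrabasics}, $|\E_x g''|\le\tau$; and since $g_2\ge g_2'$, $-\E_x g_2'=\|g_2-g_2'\|_1-\E_x g_2\le 3\b+|\E_x g_2|$, with $|\E_x g_2|\le\|g_2\|$ negligible. This gives $\|h\|_1\le\tau+3\b$ directly. Your sign analysis of $g''-g_2$ is in fact equivalent to this computation (the two non-negative pieces $h$ and $g_2-g_2'$ have disjoint supports, so $\E_x(g''-g_2)=\|h\|_1-\|g_2-g_2'\|_1$), but you need to apply Lemma \ref{algebrabasics} once, to the whole function, not ``to each piece''.
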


\begin{proof}
It follows from the hypotheses that $0\leq g''(x)-g_2'(x)\leq\nu(x)$
for every $x$.  Recall that $g_2=(f-g_1)+g_3$ and that
$\|f-g_1\|_\infty\leq b-a$.  It follows easily that $\nu=(g_2)_-$
satisfies the conditions of Lemma \ref{estimate2}, with
$\g=3\b$. (We could improve $\a$ to $b-a$, but this is not worth
bothering about.)

Applying the lemma, we deduce that $\|g''-g_2'\|_2\leq
3\b+(\a\|g''-g_2'\|_1)^{1/2}$. Now let us turn our attention to
bounding $\|g''-g_2'\|_1$. Since $\|f-g\|\leq\tau$ and $\|.\|^*$ is an
algebra norm, it follows from Lemma \ref{algebrabasics} that
$|\E_x(f(x)-g(x))|\leq\tau$.  But
$|\E_x(f(x)-g(x))|\geq\|g''-g_2'\|_1-\|g_2-g_2'\|_1$, since
$g''\geq g_2'$, and $\|g_2-g_2'\|_1\leq\|g_2-g_2'\|_2$, which we
have already shown is at most $3\b$. Therefore,
$\|g''-g_2'\|_1\leq\tau+3\b$. Inserting this bound into the
estimate at the beginning of this paragraph, we find that
$\|g''-g_2'\|_2\leq 3\b+(\a(\tau+3\b))^{1/2}$, as claimed. 
\end{proof}

Since the function $g_2''$ constructed earlier satisfies the
hypotheses required of $g''$ in Corollary \ref{estimate3}, we now have
an improved decomposition $f=h_1+h_2+h_3$, where $h_1=g_1$,
$h_2=g_2''$ and $h_3=g_3+g_2-g_2''$. Our arguments so far have shown
that $h_1$ takes values in $[a,b]$, that $\|h_1\|^*\leq 2\rho$, that
$\|h_2\|\leq\tau$, that $h_2(x)\leq f(x)-a$ for every $x$, and that
$\|h_3\|_2\leq 6\b+3\b+(\a(\tau+3\b))^{1/2}$.  If $\b$ is sufficiently
small (depending on $b-a$ if that is small, which in a typical
application it will not be), and $\tau$ is sufficiently small
(depending on $\b$), then this last quantity is at most
$2(\b(b-a))^{1/2}$, which we shall call $\zeta$.

From the way we constructed $h_2$, we know that the sign of $h_2$
is the same as that of $g_2$, and that $|h_2(x)|\leq |g_2(x)|$
for every $x$. We have obtained the upper bound of $f-a$ that we 
wanted for $h_2$; now we need a further adjustment in order to 
obtain a lower bound of $f-b$. It is obvious how to do this:
we shall sketch the argument only very briefly. 

First, we let $h_2'(x)=\max\{h_2(x),f(x)-b\}$ for every $x$. Then a
simple modification of Lemma \ref{estimate1} shows that
$\|h_2-h_2'\|_2\leq 3\zeta$.

Next, we use Theorem \ref{transference} to reduce the positive part
of $h_2'$, while leaving the negative part unchanged, to create a 
function $h_2''$ with $\|h_2''\|$ small. If we let $\alpha'=b-a+3\zeta$,
then the same argument as before gives us an upper bound 
$\|h_2''\|\leq\kappa$, where $\kappa$ is a constant such that 
$\alpha'^{-1}\tau=\d/2\rho(\a'\kappa^{-1},\zeta/4,J)$. In particular,
$\kappa$ tends to zero as $\tau$ tends to zero.

Next, a simple modification of Corollary \ref{estimate3} tells us that
$\|h_2''-h_2'\|_2\leq 3\zeta+(\a'(\kappa+3\zeta))^{1/2}$.  Therefore,
we have a decomposition $f=u_1+u_2+u_3$, with $u_1=h_1$, $u_2=h_2''$
and $u_3=h_3+h_2-h_2''$. Since $u_1=h_1$, it takes values in
$[a,b]$. The construction of $h_2''$ guarantees that $f(x)-b\leq
u_2(x)\leq f(x)-a$, and hence that $u_1+u_3$ takes values in
$[a,b]$. Finally, we have the estimates $\|u_1\|^*\leq 2\rho$,
$\|u_2\|\leq\kappa$, and $\|u_3\|_2\leq
6\zeta+3\zeta+(\a'(\kappa+3\zeta))^{1/2}$. If $\zeta$ is small enough
(depending on $b-a$) and $\kappa$ is small enough (depending on
$\zeta$), then this last quantity is at most $2((b-a)\zeta)^{1/2}$.

Now let us see why these estimates are enough, recalling from the
beginning of the proof that we are free to choose $\b$ and
$\theta$. To begin with, we need $2((b-a)\zeta)^{1/2}$ to be at most
$\e$. But $\zeta$ tends to zero with $\b$, so this is easily achieved.
Next, recall that $\rho=\rho(K,\d,J)$. We would like $\kappa$ to be at
most $\eta(\rho)$, which we shall ensure by making a suitable choice
of $\theta$. The constant $\d$ depends on $\b$, $a$ and $b$ only,
while $\kappa$ tends to zero with $\tau$, which tends to zero with
$\theta(K)$. Thus, for each $K$ we can choose $\theta(K)$ in a way
that depends on $K$, $\b$, $a$ and $b$ only, such that
$\kappa\leq\eta(\rho)$. The proof is complete.

\subsection{Decomposition theorems with bounds on ranges.}

As a simple application of Theorem \ref{taostructure}, we shall now
obtain the improvement that we promised earlier to our results
about deducing decomposition theorems from inverse theorems. So far,
we have shown that a function can be decomposed into a multiple of a
convex combination of structured functions, plus an error, provided
that we have a suitable inverse theorem concerning the structured
functions and the kind of error we are prepared to allow. As we
commented, it is sometimes useful to obtain a decomposition for which
the ``structured part" is bounded. We shall see that Theorem
\ref{taostructure} implies rather easily that such a decomposition
exists, and it has the added advantage of yielding an $L_2$ error term
rather than the $L_1$ error term that appears in Theorem
\ref{decompositiontheorem} or weaker theorems of a similar type that
were discussed earlier in Section \ref{decompositions}. However, the
bound on the sum of the coefficients of the structured functions is
very bad. For some applications, this is not a concern, but for others
it turns out to be preferable to use weaker theorems.

\begin{theorem}
Let $\|.\|$ be a norm on $\R^n$ and let $\Phi\subset\R^n$ be a set of
functions satisfying the following properties
for some strictly increasing function $c:(0,1]\ra(0,1]$:

(i) $\Phi$ contains the constant function $\mathbf{1}$, $\Phi=-\Phi$,
$\|\phi\|_\infty\leq 1$ for every $\phi\in\Phi$, and the linear span of 
$\Phi$ is $\R^n$;

(ii) $\sp{f,\phi}\leq 1$ for every $f$ with $\|f\|\leq 1$ and
every $\phi\in\Phi$;

(iii) if $\|f\|_\infty\leq 1$ and $\|f\|\geq\e$ then there exists
$\phi\in\Phi$ such that $\sp{f,\phi}\geq c(\e)$.

Let $\e>0$ and let $\eta:\R_+\ra\R_+$ be a strictly decreasing
function. Then there is a constant $M_0$, depending only on $\e$ and
the functions $c$ and $\eta$, such that every function $f\in\R^n$ that
takes values in $[0,1]$ can be decomposed as a sum $f_1+f_2+f_3$, with
the following properties: $f_1$ and $f_1+f_3$ take values in $[0,1]$;
$f_1$ is of the form $\sum_i\lambda_i\psi_i$, where
$\sum_i|\lambda|=M\leq M_0$ and each $\psi_i$ is a product of
functions in $\Phi$; $\|f_2\|\leq\eta(M)$; $\|f_3\|_2\leq\e$.
\end{theorem}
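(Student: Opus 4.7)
The plan is to reduce to Theorem \ref{taostructure} by constructing, out of $\Phi$, a norm whose dual is an algebra norm, and then to use property (iii) to transfer the resulting bound from that algebra setting back to $\|\cdot\|$.

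Let $\Psi$ denote the set of all finite pointwise products of elements of $\Phi$, including the empty product $\mathbf{1}$. By property (i), each $\psi\in\Psi$ satisfies $\|\psi\|_\infty\le 1$, the set $\Psi$ is closed under pointwise multiplication, contains $\mathbf{1}$, and spans $\R^n$ (as it already contains $\Phi$). Lemma \ref{definingalgebranorms} therefore furnishes an algebra norm
$$|g|^{*}\;=\;\inf\Bigl\{\sum_i|\lambda_i|\,:\,g=\sum_i\lambda_i\psi_i,\;\psi_i\in\Psi\Bigr\},$$
and by Corollary \ref{convexhulldual} its predual is $|g|=\max_{\psi\in\Psi}|\sp{g,\psi}|$.

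I would then apply Theorem \ref{taostructure} to the norm $|\cdot|$ on the interval $[0,1]$, with error $\e$ and input decreasing function $\eta'(t):=c(\eta(t+1))$. The dual of $|\cdot|$ is the algebra norm $|\cdot|^{*}$, so the hypotheses are met, and we obtain a constant $C_0=C_0(\eta,c,\e)$ together with a decomposition $f=f_1+f_2+f_3$ with $|f_1|^{*}\le C_0$, $|f_2|\le c(\eta(|f_1|^{*}+1))$, $\|f_3\|_2\le\e$, and crucially both $f_1$ and $f_1+f_3$ taking values in $[0,1]$. By the definition of the infimum, I then choose a representation $f_1=\sum_{i=1}^k\lambda_i\psi_i$ with each $\psi_i$ a product of elements of $\Phi$ and $M:=\sum_i|\lambda_i|\le|f_1|^{*}+1\le C_0+1=:M_0$.

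To transfer the $|\cdot|$-bound on $f_2$ into a $\|\cdot\|$-bound, note that $\|f_2\|_\infty\le 1$ because $f$ and $f_1+f_3$ both take values in $[0,1]$. Suppose for contradiction that $\|f_2\|>\eta(M)$; taking $\delta>0$ with $\|f_2\|\ge\eta(M)+\delta$ and applying property (iii) produces $\phi\in\Phi\subset\Psi$ with $\sp{f_2,\phi}\ge c(\eta(M)+\delta)$, so $|f_2|\ge c(\eta(M)+\delta)$. Since $M\le|f_1|^{*}+1$ and $\eta$ is strictly decreasing while $c$ is strictly increasing, $c(\eta(M)+\delta)>c(\eta(|f_1|^{*}+1))\ge|f_2|$, a contradiction. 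Hence $\|f_2\|\le\eta(M)$, which finishes the proof.

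The one piece of real bookkeeping is matching the bound on $|f_2|$ (phrased via $|f_1|^{*}$) to the desired bound (phrased via the coefficient sum $M$ of an actual representation); the shift $t\mapsto t+1$ inside $\eta'$ absorbs the at-most-$1$ gap between $M$ and $|f_1|^{*}$. Property (iii) supplies exactly the inverse-theorem-style equivalence between $\|\cdot\|$ and $|\cdot|$ for $L_\infty$-bounded functions, and the $L_\infty$-bound on $f_2$ — needed to invoke (iii) — is a free consequence of the range property for $f_1$ and $f_1+f_3$, which is itself the main nontrivial content of Theorem \ref{taostructure}.
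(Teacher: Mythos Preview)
Your proof is correct and follows essentially the same route as the paper: build the algebra norm $|\cdot|^*$ generated by products of elements of $\Phi$, apply Theorem \ref{taostructure} to its predual, and use hypothesis (iii) to convert the resulting $|\cdot|$-bound on $f_2$ into a $\|\cdot\|$-bound. The paper packages that last step via the notion of \emph{approximate dual} and Corollary \ref{taostructure2}, whereas you unwind it by hand; you are also a touch more careful than the paper in extracting an actual representation $f_1=\sum_i\lambda_i\psi_i$ with $M\le|f_1|^*+1$, since the infimum defining $|f_1|^*$ need not be attained.
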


\begin{proof}
Let $\Psi$ be the set of all products of functions in $\Phi$. Define a 
norm $|.|^*$ by taking $|g|^*$ to be the infimum of all sums 
$\sum_i|\lambda_i|$ such that $g$ can be written as $\sum_i\lambda_i\psi_i$
with every $\psi_i$ in $\Psi$. It is straightforward to check that
this is an algebra norm. (The fact that it is a norm rather than a 
seminorm relies on the boundedness of functions in $\Phi$, which one 
could in fact deduce from (ii) rather than stating as a separate
assumption.) Moreover, (ii) and (iii) imply easily that $|.|^*$ is
an approximate dual for $\|.\|$. Therefore, Corollary \ref{taostructure2}
implies the result.
\end{proof}

The following simple trick is important in applications. Property
(iii) in the statement of the theorem is the assertion that there is
an inverse theorem relating the norm $\|.\|$ to the set of functions
in $\Phi$. However, the conclusion of the theorem concerns
\textit{products} of functions in $\Phi$, and in practice it often
happens that the set $\Phi$ of functions that one obtains from an
inverse theorem is not closed under pointwise multiplication. However,
it also often happens that one can give an explicit description of
products of functions in $\Phi$, and that this description becomes
only gradually less useful as the number of functions in the product
increases. Under such circumstances, one can replace $\Phi$ by the set
$\{\mathbf{1},-\mathbf{1}\}\cup(\Phi/2)$. This modified set clearly
satisfies all the hypotheses that $\Phi$ was required to satisfy, but
now the corresponding set $\Psi$ comes with a ``penalty'' of $2^{-k}$
attached to a product of $K$ functions. This means that the sum of the
$|\lambda_i|$ over products of significantly more than $\log_2M_0$
functions in $\Phi$ make a very small (in $L_\infty$) contribution to
$f_1$ and can be absorbed into the error term.

\subsection{Applying Tao's structure theorem}

We shall not actually give applications of the structure theorem here,
but merely comment on how it is applied. The rough idea, as we have
already seen, is to express a bounded function (such as, for instance,
the characteristic function of a dense subset of $\mathbb{Z}_N$) as a
sum of a structured part, a quasirandom part, and an $L_2$ error.  To
do this, we need to choose a norm $\|.\|$ that measures
quasirandomness in a useful way, such that its dual norm $\|.\|^*$ is
an algebra norm with the property that if $\|\phi\|^*$ is bounded then
we ``understand" $\phi$ and can regard it as structured.  As we have
seen, a simple (but useful) example of such a norm is
$\|f\|=\|\hf\|_\infty$.

Let us briefly consider this example. If we have written a function
$f$ as $f_1+f_2+f_3$ in such a way that $\|\hf\|_1\leq C$,
$\|\hf\|_2\leq\eta(C)$ and $\|f_3\|_2\leq\e$, then we can analyse it
as follows.

We first show that $f_1$ is ``approximately smooth" in the following
sense. Let $\d,\theta>0$ be small constants to be chosen later, and
let $K$ be the set of all $r$ such that $|\hf_1(r)|\geq\d$.  Since
$\|\hf_1\|_2^2=\|f_1\|_2^2\leq 1$, it follows that
$|K|\leq\d^{-2}$. Now let $B$ be the set of all $x\in\Z_N$ such that
$|\omega^{rx}-1|\leq\theta$ for every $r\in K$. Sets like $B$ are
called \textit{Bohr neighbourhoods} and have many good properties, but
for now we remark merely that a fairly straightforward argument shows
that the cardinality of $B$ is at least $\theta^{|K|}N$.

Now let $\b$ be the \textit{characteristic measure} of $B$: that is,
the function that takes the value $N/|B|$ on $B$ and $0$
elsewhere. This multiple of $B$ is chosen so that $\|\b\|_1=1$. A
useful property of $\beta$ is that $f_1$ is close to $f_1*\b$ in
$L_2$. This can be shown with the help of Fourier transforms: the
general method is known as \textit{Bogolyubov's method}, and it is a
very useful tool in additive combinatorics.  We begin by observing
that
\begin{eqnarray*}
\|f_1-f_1*\b\|_2^2&=&\|\hf_1-\hf_1\hat{\b}\|_2^2\\ &=&\sum_{r\in
K}|\hf_1(r)|^2|1-\hat{\b}(r)|^2+\sum_{r\notin
K}|\hf_1(r)|^2|1-\hat{\b}(r)|^2\\
\end{eqnarray*}
For every $r\in K$ we have $|1-\hat{\b}(r)|=|\E_{x\in
B}(1-\omega^{rx})|\leq\theta$, so the first sum is at most
$\theta^2\|\hf\|_2^2\leq\theta^2$. We also have the trivial estimate
that $|1-\hat{\b}(r)|\leq 2$, so the second sum is at most
$4\d\|\hf\|_1\leq 4\d C$. Thus, by choosing $\d$ and $\theta$
appropriately, we can ensure that $f_1$ and $f_1*\b$ are close in
$L_2$, as claimed.

This tells us that for a typical pair $x$ and $y$, if $x-y\in B$, then
$f_1(x)$ and $f_1(y)$ are close. Equivalently, $f_1$ is almost always
roughly constant on translates of $B$.

Now if we choose $\eta(C)$ to be small enough, then $f_2$ is highly
quasirandom even compared with the size of $B$. That is,
$\|\hf_2\|_\infty$ is so small that even the restrictions of $f_2$ to
translates of $B$ behave quasirandomly (in a sense that one can make
precise in several natural ways). This means that even though $f_2$
may have a large $L_2$ norm, we may nevertheless think of $f_1+f_2$ as
a tiny perturbation of $f_1$. For instance, if a $x$ is a typical
element of $\mathbb{Z}_N$ and $f_1(x)\geq c$, then the smoothness of
$f_1$ guarantees that $f_1(y)\geq c/2$ for almost every $y\in
x+B$. From this and the positivity of $f_1$ it follows (if $B$
satisfies a certain technical condition that one can always ensure)
that
\begin{equation*}
\E_{x,d}f_1(x)f_1(x+d)f_1(x+2d)
\end{equation*}
is bounded below by some (very small) positive constant related to the
density of $B$, which depended on $C$ only. If $\eta(C)$ is much
smaller than this constant, then perturbing by $f_2$ cannot change
this lower bound to zero.

This is not quite a sketch proof of Roth's theorem (though it is
close), because there remains the problem of dealing with $f_3$. In
fact, the correct order to work in is to think about $f_1$ first, then
$f_1+f_3$, and finally $f_1+f_2+f_3$. This is why it is so helpful for
$f_1$ and $f_1+f_3$ to be non-negative functions.

The above idea can be thought of as a discrete analogue of at least
one ergodic-theoretic proof of Roth's theorem. Tao applied his
structure theorem to a sequence of cleverly constructed algebra norms
in order to extend the argument to a proof of the general case of
Szemer\'edi's theorem. Unfortunately, the analysis of the structured
function $f_1$ becomes far harder: that is where the real difficulty
of his argument lies.

The structure theorem can also be used to replace arguments that use
Szemer\'edi's regularity lemma. This is not too surprising, as in both
cases the strength of the result comes from the fact that the bound on
the quasirandomness can be made so small that it is even small
compared with the ``natural scale" of the structured part. Similarly,
it can be used to replace a version of Szemer\'edi's
regularity lemma, due to Green \cite{benarithreg}, that concerns dense
subsets of finite Abelian groups.

\end{document}